\definecolor{dblue}{rgb}{0,0,.6}
\numberwithin{equation}{section}
\newtheorem{theorem}{Theorem}[section]
\theoremstyle{plain}
\newtheorem{corollary}[theorem]{Corollary}
\newtheorem{lemma}[theorem]{Lemma}
\newtheorem{proposition}[theorem]{Proposition}
\newtheorem{remark}[theorem]{Remark}
\newcommand{\del}{\partial}
\newcommand{\Z}{\mathbb Z}
\newcommand{\Q}{\mathbb Q}
\newcommand{\C}{\mathbb C}
\newcommand{\D}{\operatorname{D}}
\newcommand{\im}{\operatorname{im}}
\newcommand{\Hom}{\operatorname{Hom}}
\newcommand{\Spec}{\operatorname{Spec}}
\newcommand{\codim}{\operatorname{codim}}
\newcommand{\CH}{\operatorname{CH}}
\newcommand{\supp}{\operatorname{supp}}
\newcommand{\red}{\operatorname{red}}
\newcommand{\cl}{\operatorname{cl}}
 \newcommand{\sm}{\operatorname{sm}}
  \newcommand{\coker}{\operatorname{coker}}
\newcommand{\tors}{\operatorname{tors}}
\newcommand{\alg}{\operatorname{alg}}
\newcommand{\Sh}{\operatorname{Shv}}
\newcommand{\et}{\text{\'et}}
\newcommand{\proet}{\text{pro\'et}}
\newcommand{\colim}{\operatorname{colim}}
\newcommand{\DM}{\operatorname{DM}}
\newcommand{\h}{\operatorname{h}}
\newcommand{\BM}{\operatorname{BM}}
\newcommand{\BMM}{\operatorname{BM,M}}
\newcommand{\dashedlongrightarrow}{\xymatrix@1@=15pt{\ar@{-->}[r]&}}
\renewcommand{\longrightarrow}{\xymatrix@1@=15pt{\ar[r]&}}
\renewcommand{\mapsto}{\xymatrix@1@=15pt{\ar@{|->}[r]&}}
\renewcommand{\twoheadrightarrow}{\xymatrix@1@=15pt{\ar@{->>}[r]&}}
\newcommand{\hooklongrightarrow}{\xymatrix@1@=15pt{\ar@{^(->}[r]&}}
\newcommand{\congpf}{\xymatrix@1@=15pt{\ar[r]^-\sim&}}
\renewcommand{\cong}{\simeq}
\begin{document}

\title[Two Cycle Class maps on Torsion Cycles]{Two Cycle Class maps on Torsion Cycles}
\author{Theodosis Alexandrou} 
\address{Institute of Algebraic Geometry, Leibniz University Hannover, Welfengarten 1, 30167 Hannover , Germany.}
\email{alexandrou@math.uni-hannover.de}

\date{\today} 
\subjclass[2020]{primary 14C15, 14C25}
%

\keywords{torsion cycles}

 \begin{abstract} We compare two cycle class maps on torsion cycles and show that they agree up to a minus sign. The first one goes back to Bloch (1979), with recent generalizations to non-closed fields. The second is the \'etale motivic cycle class map $\alpha^{i}_{X}\colon \CH^{i}(X)_{\Z_{\ell}}\to H^{2i}_{L}(X,\Z_{\ell}(i))$ restricted to torsion cycles.\end{abstract}

\maketitle

\section{Introduction}
 Let $X$ be a smooth variety over a field $k$ and let $\ell$ be a prime invertible in $k$. One approach to study the torsion in the Chow group $\CH^{i}(X)$ of codimension $i$-cycles on $X$ is via cycle class maps. If the field $k$ is algebraically closed and $X$ is projective, a notable example of such a map is that of Bloch's (see \cite{bloch-compositio}) \[ \lambda^{i}_{X}\colon \CH^{i}(X)[\ell^{\infty}]\longrightarrow H^{2i-1}(X_{\et},\Q_{\ell}/\Z_{\ell}(i)),\] with values in \'etale cohomology and where $\CH^{i}(X)[\ell^{\infty}]$ denotes the $\ell$-power torsion subgroup of $\CH^{i}(X)$. This map becomes particularly interesting, when restricted to $i\leq 2$ or $i=\dim X$. The reason for this is that $\lambda^{i}_{X}$ is known to be injective in these cases. In particular, $\lambda^{1}_{X}$ is simply the isomorphism that arises from the Kummer sequence. The injectivity of $\lambda^{2}_{X}$ is a result of Bloch and Merkurjev-Suslin (see \cite[$\S$18]{MS}), whereas the injectivity of $\lambda^{\dim X}_{X}$, recovers the celebrated theorem of Roitman \cite{roitman}. For the remaining cases, i.e. $3\leq i\leq \dim X-1$, the map $\lambda^{i}_{X}$ in general fails to be injective (see \cite{schoen-product, totaro-JAMS, SV, RS, totaro-annals, Sch-griffiths, Ale-griffiths}).\par Some efforts have been made to generalise Roitman's theorem to non-proper and even singular varieties (see \cite{levine-AJM, bs, krishna-srinivas, geisser}), as well as to extend it over finite fields \cite{CTSS,KS}. In pursuit of generalizing Roitman's theorem over arbitrary fields, the authors of \cite{alexandrou-schreieder}, following \cite{bloch-compositio, Sch-refined}, constructed a homomorphism 
 \begin{align} \label{def:blochs-map} \lambda^{i}_{X}\colon \CH^{i}(X)[\ell^{\infty}]\longrightarrow H^{2i-1}(X_{\et},\Q_{\ell}/\Z_{\ell}(i))/M^{2i-1}(X),
 \end{align} that extends Bloch's definition to non-closed fields and where the group $M^{2i-1}(X)$ is explicitly defined as follows: Fix a finitely generated subfield $k_{0}$ of $k$, such that $X$ admits a model $X_{0}$ over $k_{0}$, i.e. $X=X_{0}\times_{k_{0}}k$ and let \begin{align}\label{def:M^2i-1} M^{2i-1}(X)=\im\left( \underset{k_{0}\subset E\subset k}{\colim} N^{i-1} H^{2i-1}_{cont}({X_0}_{E},\Q_\ell(i))\longrightarrow  H^{2i-1}_{cont}(X,\Q_\ell/\Z_\ell(i))\right),\end{align} where the colimit runs over the finitely generated subfields $E$ of $k$ containing $k_{0}$ and where $N^{\ast}$ denotes the coniveau filtration (see \eqref{eq:coniveau-filtration}). The latter map is shown to be injective for all $i\leq2$ and not in general if $3\leq i \leq\dim X$ (see \cite[Thm. 1.1 and Thm. 1.2]{alexandrou-schreieder}). Especially, Roitmans theorem \cite{roitman} fails over arbitrary fields.\par In addition to Bloch's map, another well-known example is the \'etale motivic cycle class map \begin{align}\label{def:motivic-cycle-map}\alpha^{i}_{X}\colon \CH^{i}(X)_{\Z_{\ell}}\longrightarrow H^{2i}_{L}(X,\Z_{\ell}(i)),\end{align} with values in Lichtenbaum (or \'etale motivic) cohomology. This is simply the comparison map $$H^{j}_{M}(X,\Z_{\ell}(i))\longrightarrow H^{j}_{L}(X,\Z_{\ell}(i))$$ in case $j=2i$, where the subscript $M$ denotes motivic cohomology \cite{v-s-f}. It is known that \eqref{def:motivic-cycle-map} is also injective if $i\leq2$ (see \cite[Prop. 2.5]{kahn}). Moreover, Kahn noticed that some of the examples from \cite{scavia-suzuki, alexandrou-schreieder} serve as counterexamples to the injectivity of $\alpha^{i}_{X}$, as well (see \cite[Sec. 7]{kahn23}).\par When restricted to torsion cycles \eqref{def:motivic-cycle-map} takes the form \begin{align}\label{def:etale-motivic-torsion-map}\alpha^{i}_{X,\tors}:\CH^{i}(X)[\ell^{\infty}]\longrightarrow H^{2i-1}(X_{\et},\Q_{\ell}/\Z_{\ell}(i))/I^{2i-1}(X),\end{align} where $I^{2i-1}(X):=\im(H^{2i-1}_{M}(X,\Q_{\ell}(i))\longrightarrow H^{2i-1}(X_{\et},\Q_{\ell}/\Z_{\ell}(i)))$ and where the isomorphism $$H^{2i}_{L}(X,\Z_{\ell}(i))_{\tors}\cong H^{2i-1}(X_{\et},\Q_{\ell}/\Z_{\ell}(i))/I^{2i-1}(X)$$ follows readily from the Bockstein sequence (see Prop. \ref{prop:twisted-borel-moore-axioms} \ref{P3} below). If $k$ is algebraically closed and $X$ is projective, then $I^{2i-1}(X)=0$ by weight reasons. In case $k=\C$, an inspection of the argument in \cite[Prop. 5.1.(c)]{RS16} gives that \eqref{def:etale-motivic-torsion-map} is Bloch's map up to a minus sign.\par In light of the properties that both \eqref{def:blochs-map} and \eqref{def:etale-motivic-torsion-map} share it is reasonable to wonder whether $I^{2i-1}(X)=M^{2i-1}(X)$ and specifically, if the two maps coincide. We dedicate this note to show the following. 
 \begin{theorem}\label{thm:lambda=alpha} Let $X$ be a smooth variety over a field $k$ and let $\ell$ be a prime invertible in $k$. Let $\lambda^{i}_{X}$ and $\alpha^{i}_{X,\tors}$ be the cycle class maps on $\ell$-power torsion cycles from \eqref{def:blochs-map} and \eqref{def:etale-motivic-torsion-map}, respectively. Then $I^{2i-1}(X)=M^{2i-1}(X)$ and $\lambda^{i}_{X}=-\alpha^{i}_{X,\tors}$ for all $1 \leq i\leq\dim X$.\end{theorem}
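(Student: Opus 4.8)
The plan is to reduce both maps to a common, explicit description via the Bockstein/connecting homomorphisms and then match the two identifications of the relevant quotient of étale cohomology. First I would set up both constructions in parallel on the level of the Bockstein sequence attached to $0\to\Z_\ell(i)\to\Q_\ell(i)\to\Q_\ell/\Z_\ell(i)\to0$ in the appropriate cohomology theory. For $\alpha^{i}_{X,\tors}$ this is essentially by definition: an $\ell$-power torsion class $z\in\CH^{i}(X)[\ell^\infty]$ maps under $\alpha^{i}_{X}$ to a torsion class in $H^{2i}_{L}(X,\Z_\ell(i))$, and the Bockstein sequence in Lichtenbaum cohomology identifies $H^{2i}_{L}(X,\Z_\ell(i))_{\tors}$ with $H^{2i-1}(X_{\et},\Q_\ell/\Z_\ell(i))/I^{2i-1}(X)$, where the quotient is forced because $H^{2i-1}_{L}(X,\Q_\ell(i))=H^{2i-1}_{M}(X,\Q_\ell(i))$ (rational motivic and Lichtenbaum cohomology agree). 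So $\alpha^{i}_{X,\tors}(z)$ is represented by choosing a lift of $\ell^{N}z=0$ and taking the image of a bounding class — the standard "divide by $\ell^N$" recipe.

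Next I would recall, from \cite{alexandrou-schreieder}, the construction of $\lambda^{i}_{X}$: it is built from a refined cycle class in a suitable continuous/Borel–Moore theory, again via a Bockstein-type connecting map, after passing to a finitely generated model $X_0/k_0$ and taking a colimit over finitely generated subfields $E$; the subgroup $M^{2i-1}(X)$ is exactly the coniveau-filtered part that must be killed so that the construction is well-defined independently of the choices of cycle representative. The key technical claim is then twofold: (a) the indeterminacy subgroups coincide, $I^{2i-1}(X)=M^{2i-1}(X)$, and (b) under this identification the two connecting-map recipes differ by exactly $-1$. For (a), I would argue that $I^{2i-1}(X)=\im(H^{2i-1}_{M}(X,\Q_\ell(i))\to H^{2i-1}(X_{\et},\Q_\ell/\Z_\ell(i)))$ can be computed by writing $X$ as a limit of its finitely generated models and using continuity of motivic cohomology, then identifying the image of rational motivic cohomology with the colimit of the coniveau pieces $N^{i-1}H^{2i-1}_{cont}$ appearing in \eqref{def:M^2i-1}; the inclusion $M^{2i-1}(X)\subseteq I^{2i-1}(X)$ should be fairly direct, and the reverse inclusion should follow from a coniveau/Gersten argument showing every rational motivic class of the right degree lives in coniveau $\geq i-1$ on some model, together with the comparison between $H_M$ and $H_{cont}$ rationally.

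For the sign statement (b), the plan is to trace through the two Bockstein squares and show they are compatible up to the sign that is already visible in the complex-analytic case: as the excerpt notes, \cite[Prop. 5.1(c)]{RS16} gives $\alpha^{i}_{X,\tors}=-\lambda^{i}_{X}$ over $\C$, and the source of the minus sign is the standard discrepancy between the topological Bockstein and the "Bloch-style" connecting homomorphism (one uses the short exact sequence $0\to\Z\to\Q\to\Q/\Z\to0$, the other effectively its shift, contributing a $(-1)$). I would isolate this as a diagram-chase lemma comparing, for a fixed $N$, the map "multiply by $\ell^{N}$ then bound" in the two theories, reducing it to the functoriality of Bockstein homomorphisms under the comparison $H_{M}\to H_{L}\to H_{\et}$ and the compatibility of refined cycle classes with proper pushforward and with restriction along $X\hookrightarrow$ (localization triangles) — all of which are available once the theory satisfies the axioms packaged in Prop.~\ref{prop:twisted-borel-moore-axioms}. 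The main obstacle I expect is part (a): carefully matching the a priori different indeterminacy groups $I^{2i-1}$ (defined via motivic cohomology of $X$ itself) and $M^{2i-1}$ (defined via a colimit of coniveau-filtered continuous cohomology of models $X_0{}_E$), which requires combining continuity of motivic cohomology, the comparison with continuous étale cohomology in the relevant degree, and a coniveau estimate; once $I^{2i-1}(X)=M^{2i-1}(X)$ is established, the equality of maps up to sign should reduce to the naturality of the Bockstein construction and the single explicit sign computation already carried out over $\C$.
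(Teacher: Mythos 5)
Your overall architecture (identify the two indeterminacy subgroups, then compare the two Bockstein recipes and reduce the sign to one explicit computation) parallels the paper, but the decisive step is exactly the one you dismiss as ``fairly direct,'' and there is a genuine gap there. You have the asymmetry backwards: the inclusion $I^{2i-1}(X)\subseteq M^{2i-1}(X)$ is the formal one --- rational motivic cohomology in degree $2i-1$ and weight $i$ has full coniveau $N^{i-1}$ because $H^{j}(x,\Q_{\ell}(m))=0$ for $j>m$ (Lemma \ref{lemma:cohomology-of-point} \ref{P6}, giving \eqref{lemma:N^{i-1(Q)}}), and the comparison map to continuous/pro-\'etale cohomology respects coniveau and commutes with the colimit over finitely generated models --- whereas $M^{2i-1}(X)\subseteq I^{2i-1}(X)$ is where all the content sits. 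There is no rational ``comparison between $H_{M}$ and $H_{cont}$'' to invoke: there is only a cycle class map, which is far from surjective, so equality of \emph{images} cannot be obtained formally. The paper's route is to present both coniveau pieces as images of (colimits of) $H^{1}(Z,\Q_{\ell}(1))$ over codimension $i-1$ subsets $Z$ (in a Borel--Moore theory, so that singular $Z$ are allowed), reducing to the case $i=1$, where the needed statement is that $\rho^{2,1}_{Z,\tors}$ is an isomorphism over a finitely generated field (Lemma \ref{lemma:prop-rho} \ref{rho-2}, used in Lemma \ref{lemma:codomains}); that in turn rests on $A^{1}(Z)_{\Z_{\ell}}=\CH^{1}(Z)_{\Z_{\ell}}$ and the torsion-freeness of $H^{2}_{\BM}({F_{0}Z}_{\proet},\Z_{\ell}(1))$ from \cite{Sch-refined}, i.e.\ on genuine arithmetic finiteness over finitely generated fields. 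Over a large field such as $\Qbar$ the analogous pointwise statement is false (e.g.\ $T_{\ell}\Pic$ makes $H^{1}_{cont}(Z,\Q_{\ell}(1))$ strictly larger than the image of units), which is precisely why $M^{2i-1}$ is defined through a colimit over finitely generated subfields; your proposal supplies no argument at this point.

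On the sign, citing \cite[Prop.\ 5.1(c)]{RS16} only settles $k=\C$; over general fields the paper obtains the minus sign by choosing a codimension $i-1$ closed subset $W$ supporting the torsion cycle, pushing forward, and invoking the codimension-one identity $\cl^{1}_{W,\tors}=-\delta\circ\lambda^{1}_{W}$ of \cite[Lemma 3.5]{alexandrou-schreieder} over finitely generated fields together with the isomorphisms $\lambda^{1}_{W}$ and $\rho^{2,1}_{W,\tors}$ (Lemma \ref{lemma:beta_X=alpha_X}). Your proposed Bockstein-functoriality diagram chase could be made to work, but it has to be run in a theory with proper pushforward and localization for the possibly singular $W$ --- this is exactly what the intermediate groups $H^{\ast}_{\BMM}$ and the map $\beta^{i}_{X}$ are built for --- and the reduction of the sign to the complex-analytic case is not available over an arbitrary field; as written, this step and, more seriously, the inclusion $M^{2i-1}(X)\subseteq I^{2i-1}(X)$ remain unproved in your plan.
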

 The idea of the proof relies on the fact that one can use Borel-Moore motivic \'etale cohomology to reformulate $\alpha^{i}_{X,\tors}$ in such a way that the relation between the two maps becomes clear.
 \subsection{Notations} We adopt the following notations and conventions: $k$ is a field. An algebraic $k$-scheme $X$ is a seperated scheme of finite type over $k$. A variety $X$ is an integral algebraic $k$-scheme. Given an abelian group $A$, $A[m]:=\ker(A\overset{m}{\to}A)$ is the kernel of multiplication by $m$ and we let $A[\ell^{\infty}]=\bigcup _{r=1}^{\infty}A[\ell^{r}]$. If $f\colon A\to B$ is a homomorphism of abelian groups, we denote by abuse of notation $B/A:=\coker(f)$.
 \section{Twisted Borel-Moore \'Etale Motivic Cohomology}
 \subsection{Borel-Moore motivic \'etale cohomology}\label{sub:1} We gather some of the important properties of Borel-Moore motivic \'etale cohomology for later use. The results of this subsection are contained in \cite{v-s-f} and \cite{cd}.\par Let $R$ be a commutative ring. For a noetherian scheme $X$, we let $\DM_{\h}(X,R)$ be the category of $\h$-motives as defined in \cite[Def. 5.1.3]{cd}. By \cite[Thm 5.6.2]{cd}, the triangulated premotivic category $\DM_{\h}(-,R)$ satisfies the formalism of the Grothendieck 6 functors (see \cite[Def. A.1.10]{cd}) for noetherian schemes of finite dimension. 
 Thus one can define the Borel-Moore motivic \'etale cohomology group of an algebraic scheme $X$ of dimension $d_{X}$ by the formula \begin{align}\label{def:Borel-Moore-Motivic}
 H^{i}_{\BMM}(X_{\et},R(j)):=\Hom_{\DM_{\h}(X,R)}(\mathbbm{1}_{X}(d_{X}-j)[2d_{X}-i],\pi^{!}_{X}(\mathbbm{1}_{\Spec(k)})),  
 \end{align}
 where $\mathbbm{1}_{X}$ denotes the identity object for $\otimes$ in $\DM_{\h}(X,R)$ and $\pi_{X}:X\to\Spec(k)$ is the structure morphism. We shall use the notation $R_{X}$ instead of $\mathbbm{1}_{X}$, whenever we want to give emphasis on the ring of coefficients $R$.\par Let $\ell$ be a prime invertible in the field $k$. The ring $R$ will always be taken from the set $\{\Z_{\ell},\Q_{\ell},\Z/{\ell^{r}}\}$. We also define \begin{align} \label{def:Q/Z-coefficients}
     H^{i}_{\BMM}(X_{\et},\Q_{\ell}/\Z_{\ell}(j)):=\underset{r\to\infty}{\colim}\ H^{i}_{\BMM}(X_{\et},\Z/\ell^{r}(j)).
 \end{align} 
 By \cite[Prop. 6.3.4 and Cor. 5.5.14]{cd}, one gets a canonical isomorphism \begin{align}\label{def:Q-coeeficients}
 H^{i}_{\BMM}(X_{\et},\Q_{\ell}(j))\cong H^{i}_{\BMM}(X_{\et},\Z_{\ell}(j))\otimes\Q_{\ell}. \end{align}
If $R=\Z/\ell^{r}$, then by \cite[Cor. 5.5.4]{cd}, we have an equivalence of premotivic triangulated categories over the category of finite dimensional noetherian schemes: \[ D(-_{\et},\Z/\ell^{r})\cong\DM_{\h}(-,\Z/\ell^{r}).\] We will identify \eqref{def:Borel-Moore-Motivic} with Borel-Moore \'etale cohomology in this case via the canonical isomorphism\begin{align}\label{etale=motivic} H^{i}_{\BMM}(X_{\et},\Z/\ell^{r}(j))\cong H^{i}_{\BM}(X_{\et},\Z/\ell^{r}(j)).\end{align}
\par Let $R\in\{\Z_{\ell},\Q_{\ell},\Z/{\ell^{r}}\}$. If $X$ is smooth and equi-dimensional of dimension $d_{X}$, then the natural isomorphism $\pi_{X}^{*}(d_{X})[2d_{X}]\cong\pi^{!}_{X}$, yields \begin{align}\label{etale-vs-lichtenbaum}
    H^{i}_{\BMM}(X_{\et},R(j))\cong H^{i}_{L}(X,R(j)).
\end{align}
 Here the subscript $L$ corresponds to Lichtenbaum (or \'etale motivic) cohomology (see \cite[Thm. 7.1.2]{cd} and \cite{voe}). If we restrict to $R=\Q_{\ell}$, then by \cite[Prop. 2.2.10]{cd}, the last isomorphism also gives \begin{align}\label{etale-vs-motivic}
H^{i}_{\BMM}(X_{\et},\Q_{\ell}(j))\cong H^{i}_{M}(X,\Q_{\ell}(j)),\end{align} where $H^{i}_{M}(-,R(j))$ denotes motivic cohomology (see \cite{v-s-f}).\par We close this subsection with two Lemmas that will be used frequently.
\begin{lemma}\label{lemma:topological-invariance} Let $X$ be an algebraic $k$-scheme and let $\ell$ be a prime invertible in $k$. Fix a perfect closure $k':=k^{\text{perf}}$ for the field $k$ and let $R\in\{\Z_{\ell},\Q_{\ell},\Z/{\ell^{r}}\}$. We have natural isomorphisms $$ H^{i}_{\BMM}(X_{\et},R(j))\cong H^{i}_{\BMM}({X_{k'}}_{\et},R(j))\ \text{and}\ H^{i}_{\BMM}(X_{\et},R(j))\cong H^{i}_{\BMM}({X_{\red}}_{\et},R(j)) $$ for all $i,j\in\Z$.\end{lemma}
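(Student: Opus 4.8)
The plan is to deduce both isomorphisms from a single input: pullback along a universal homeomorphism induces an equivalence on $\DM_{\h}(-,R)$ that is compatible with the six operations. First I would record that the relevant morphisms are indeed universal homeomorphisms: the reduction $\iota\colon X_{\red}\hookrightarrow X$ is a closed immersion cut out by a nilpotent ideal sheaf, and since $k^{\mathrm{perf}}/k$ is purely inseparable the map $\Spec k'\to\Spec k$ is integral, radicial and surjective, hence a universal homeomorphism, so that its base change $q\colon X_{k'}\to X$ is one as well; in all cases the underlying topological space, and in particular the dimension $d_{X}$, is preserved. For $R=\Z/\ell^{r}$ the required invariance is, via the equivalence $\DM_{\h}(-,\Z/\ell^{r})\cong D(-_{\et},\Z/\ell^{r})$ of \cite[Cor. 5.5.4]{cd} underlying \eqref{etale=motivic}, the classical invariance of the small \'etale site under universal homeomorphisms; for $R\in\{\Z_{\ell},\Q_{\ell}\}$ it follows from the $h$-descent and continuity properties of $\DM_{\h}(-,R)$ recorded in \cite{cd} ($h$-descent handles the nilpotent immersion, and continuity handles the — possibly infinite — extension $k^{\mathrm{perf}}/k$ by writing it as a cofiltered limit of finite ones), with the case $\Q_{\ell}$ alternatively reduced to $\Z_{\ell}$ via \eqref{def:Q-coeeficients}.

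For the perfect closure, I would write $p\colon\Spec k'\to\Spec k$ and $q\colon X_{k'}\to X$ for the two projections, so that $\pi_{X}\circ q=p\circ\pi_{X_{k'}}$. Since $q^{*}$ is a monoidal equivalence, $q^{*}\mathbbm{1}_{X}(d_{X}-j)[2d_{X}-i]\cong\mathbbm{1}_{X_{k'}}(d_{X}-j)[2d_{X}-i]$. The base change isomorphism $p^{*}(\pi_{X})_{!}\cong(\pi_{X_{k'}})_{!}\,q^{*}$ for the cartesian square, combined with the fact that $p^{*}$ and $q^{*}$ are equivalences, yields upon passing to right adjoints the exchange isomorphism $q^{*}\pi_{X}^{!}\cong\pi_{X_{k'}}^{!}\,p^{*}$, whence $q^{*}\pi_{X}^{!}(\mathbbm{1}_{\Spec k})\cong\pi_{X_{k'}}^{!}(p^{*}\mathbbm{1}_{\Spec k})=\pi_{X_{k'}}^{!}(\mathbbm{1}_{\Spec k'})$. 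Applying the equivalence $q^{*}$ to the $\Hom$-groups in \eqref{def:Borel-Moore-Motivic} then produces the first natural isomorphism $H^{i}_{\BMM}(X_{\et},R(j))\cong H^{i}_{\BMM}({X_{k'}}_{\et},R(j))$.

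For the reduction, I would use $\pi_{X_{\red}}=\pi_{X}\circ\iota$, so that $\pi_{X_{\red}}^{!}\cong\iota^{!}\pi_{X}^{!}$. As $\iota$ is a closed immersion and a universal homeomorphism, $\iota_{*}=\iota_{!}$ is an equivalence with quasi-inverse $\iota^{*}$, and since $\iota^{!}$ is right adjoint to $\iota_{!}$ it is likewise a quasi-inverse of $\iota_{*}$, giving a canonical isomorphism $\iota^{!}\cong\iota^{*}$. Hence $\pi_{X_{\red}}^{!}(\mathbbm{1}_{\Spec k})\cong\iota^{*}\pi_{X}^{!}(\mathbbm{1}_{\Spec k})$, and applying the equivalence $\iota^{*}$ — together with $\iota^{*}\mathbbm{1}_{X}(d_{X}-j)[2d_{X}-i]\cong\mathbbm{1}_{X_{\red}}(d_{X}-j)[2d_{X}-i]$ and $d_{X}=d_{X_{\red}}$ — to \eqref{def:Borel-Moore-Motivic} yields $H^{i}_{\BMM}(X_{\et},R(j))\cong H^{i}_{\BMM}({X_{\red}}_{\et},R(j))$.

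The only point that is not formal bookkeeping is the universal-homeomorphism invariance of $\DM_{\h}(-,R)$ and its compatibility with $\pi^{!}$: for $\Z/\ell^{r}$-coefficients this is classical, and for $\Z_{\ell}$- and $\Q_{\ell}$-coefficients it is built into the formalism of \cite{cd}, the only mildly delicate issue being that $k^{\mathrm{perf}}/k$ need not be finite, which is harmless once one knows it induces a universal homeomorphism on spectra. Naturality of the two isomorphisms — with respect to the proper pushforward and the refined Gysin pullback functorialities of $H^{*}_{\BMM}$ — follows from the compatibility of $q^{*}$ and $\iota^{*}$ with the six operations.
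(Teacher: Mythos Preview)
Your argument is correct and follows essentially the same route as the paper's: both rest on the invariance of $\DM_{\h}(-,R)$ under universal homeomorphisms, the paper invoking \cite[Prop.~6.3.16]{cd} and applying the equivalence $p^{\ast}$ over the base after rewriting the Borel--Moore group via adjunction as a $\Hom$ in $\DM_{\h}(k,R)$, while you apply the equivalence $q^{\ast}$ over $X$ and use the exchange isomorphism for $\pi^{!}$. For the reduction the paper simply records $\DM_{\h}(-,R)\cong\DM_{\h}(-_{\red},R)$, which packages the same content as your $\iota^{!}\cong\iota^{\ast}$ computation.
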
\begin{proof} Indeed, let $d=\dim X$ and consider the cartesian square \begin{center}
    \begin{tikzcd}
X':=X_{k'} \arrow[d, "p'"] \arrow[r, "\pi_{X'}"] & \Spec k' \arrow[d, "p"] \\
X \arrow[r, "\pi_{X}"]                           & \Spec k.              
\end{tikzcd}
\end{center} By adjunction, we clearly have $$H^{i}_{\BMM}(X_{\et},R(j))=\Hom_{\DM_{\h}(k,R)}({\pi_{X}}_{!}\mathbbm{1}_{X}(d-j)[2d-i],\mathbbm{1}_{\Spec(k)}).$$ Using the natural isomorphism $p^{\ast}(\pi_{X})_{!}\cong (\pi_{X'})_{!}p'^{\ast}$ (see \cite[Def. A.1.10 (4)]{cd}), we see that the pull-back $p^{\ast}\colon\DM_{\h}(k,R)\longrightarrow \DM_{\h}(k',R)$ induces a homomorphism $$
    p^{\ast}\colon H^{i}_{\BMM}(X_{\et},R(j))\longrightarrow H^{i}_{\BMM}(X'_{\et},R(j)).
$$
The first isomorphism follows as the pull-back functor $p^{\ast}$ is an equivalence by \cite[Prop. 6.3.16]{cd}. Since $\DM_{\h}(-,R)\cong\DM_{\h}(-_{\red},R)$, the second isomorphism is also clear.\end{proof}
\begin{lemma}\label{lemma:colimit} Let $X$ be an algebraic $k$-scheme and let $\ell$ be a prime invertible in $k$. Fix a field $k_{0}$ over which $X$ has a model $X_{0}$, i.e. $X=X_{0}\times_{k_{0}}k$ and let $R\in\{\Z_{\ell},\Q_{\ell},\Z/{\ell^{r}}\}$. For all $i,j\in \Z$, we have \[H^{i}_{\BMM}(X_{\et},R(j))=\underset{k_{0}\subset E\subset k}{\colim}H^{i}_{\BMM}({{X_{0}}_{E}}_{\et},R(j)),\]where the colimit runs over all finitely generated field extensions $k_{0}\subset E$ with $E\subset k$.\end{lemma}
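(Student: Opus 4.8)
The plan is to reduce the statement to a colimit of the same statement over finitely generated base fields, and then verify it there by a direct comparison of continuity properties. First I would observe that both sides of the desired equality $H^{i}_{\BMM}(X_{\et},R(j)) = \colim_{k_{0}\subset E\subset k} H^{i}_{\BMM}({{X_{0}}_{E}}_{\et},R(j))$ can be expressed, after applying adjunction as in the proof of Lemma~\ref{lemma:topological-invariance}, as $\Hom$-groups in $\DM_{\h}(k,R)$, respectively $\DM_{\h}(E,R)$, of shifted Tate twists of $(\pi_{X})_{!}\mathds{1}_{X}$ against the unit object. So the content of the Lemma is a continuity statement: the category $\DM_{\h}(-,R)$, together with its six-functor formalism, is compatible with filtered inverse limits of schemes along affine transition maps. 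Concretely, writing $k = \colim_{E} E$ realizes $\Spec k$ as the limit of the affine schemes $\Spec E$, and $X = \lim_{E} {X_{0}}_{E}$, with the structure maps, the $!$-pushforwards, and the unit objects all being pulled back compatibly along the projections.

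The key input is the continuity property of $\h$-motives (or of the underlying étale sheaves with $R$-coefficients). For $R = \Z/\ell^{r}$ this is the statement that étale cohomology with finite coefficients commutes with limits of schemes along affine transition morphisms, which is classical (SGA 4, or \cite[\S5]{cd} where $D(-_{\et},\Z/\ell^{r})\cong\DM_{\h}(-,\Z/\ell^{r})$); combined with \eqref{etale=motivic}, this settles the case $R=\Z/\ell^{r}$ immediately — both sides compute Borel--Moore étale cohomology $H^{i}_{\BM}(-_{\et},\Z/\ell^{r}(j))$ and continuity there is standard, noting that $\h$-descent and proper base change make the $!$-formalism compatible with the limit. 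The case $R = \Q_{\ell}$ then follows by tensoring with $\Q_{\ell}$ using \eqref{def:Q-coeeficients} and the exactness of filtered colimits, since both $H^{i}_{\BMM}(X_{\et},\Z_{\ell}(j))$ and the colimit of $H^{i}_{\BMM}({{X_{0}}_{E}}_{\et},\Z_{\ell}(j))$ carry the $\Z_{\ell}$-version. The case $R = \Z_{\ell}$ is the remaining one, and here one uses that $\Z_{\ell}$-coefficients are, in the relevant range, built from the $\Z/\ell^{r}$-coefficients via a Milnor exact sequence / homotopy limit; since filtered colimits are exact and commute with the relevant $\lim^{1}$-free situations (or one argues directly with the pro-system), the $\Z_{\ell}$-statement reduces to the finite-coefficient one.

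The cleanest route, which I would actually write, is to cite continuity of $\DM_{\h}(-,R)$ directly: the premotivic category $\DM_{\h}(-,R)$ is \emph{continuous} with respect to filtered systems of affine morphisms in the sense of \cite[Def. 4.3.2]{cd}, and this property is established in \cite{cd} for $R$ in our list (via the equivalence with étale sheaves for torsion coefficients and then passing to $\Z_{\ell}$- and $\Q_{\ell}$-coefficients as above). Granting this, the limit presentation $X = \lim_{E} {X_{0}}_{E}$ over $\Spec k = \lim_{E}\Spec E$, together with the compatibility of $\pi^{!}$, $\otimes$, Tate twists and shifts with pullback along the projection maps, gives a canonical identification of the $\Hom$-group defining $H^{i}_{\BMM}(X_{\et},R(j))$ with the filtered colimit of the corresponding groups over the $E$. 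One subtlety to address: the objects $\mathds{1}_{X}(d_{X}-j)[2d_{X}-i]$ and $\pi^{!}_{X}(\mathds{1}_{\Spec k})$ are \emph{not} obviously compact, so continuity as a statement about $\Hom$-groups requires knowing that the relevant objects are pulled back from a finite level and that the transition functors are the pullbacks $p^{*}$; this is exactly what the continuity axiom packages, and it applies because $d_{X} = d_{{X_{0}}_{E}}$ is constant in the system and the structure morphisms are themselves obtained by base change.

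The main obstacle I anticipate is purely bookkeeping: one must check that all the data entering \eqref{def:Borel-Moore-Motivic} — the dimension $d_{X}$, the unit object, the exceptional pullback $\pi^{!}_{X}$, and the Tate twist — are compatible with the pullback functors $p^{*}_{E/E'}\colon \DM_{\h}(E',R)\to\DM_{\h}(E,R)$ along the tower, so that the colimit of $\Hom$-groups is taken along the maps induced by $p^{*}$ and not some other system. Since $\pi_{X}$ is the base change of $\pi_{{X_{0}}_{E}}$ and $\DM_{\h}$ has a full six-functor formalism, the base-change isomorphism $p^{*}(\pi_{X_{0,E}})_{!}\cong(\pi_{X})_{!}p'^{*}$ (and its $!$-pullback analogue) handles this, exactly as in the proof of Lemma~\ref{lemma:topological-invariance}; the $\Q_{\ell}/\Z_{\ell}$-variant \eqref{def:Q/Z-coefficients} then follows formally by passing to the colimit over $r$, since filtered colimits commute. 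I would therefore spend the bulk of the write-up on the finite-coefficient case via \eqref{etale=motivic} and standard continuity of étale cohomology, and dispatch the $\Z_{\ell}$, $\Q_{\ell}$ cases by the short limit/colimit exactness arguments above.
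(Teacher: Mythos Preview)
Your ``cleanest route'' --- citing continuity of $\DM_{\h}(-,R)$ with respect to filtered inverse limits along affine transition maps --- is exactly what the paper does: after setting up the direct system via the base-change isomorphism $p^{\ast}(\pi_{X_{0,E}})_{!}\cong(\pi_{X})_{!}p'^{\ast}$ (as in the proof of Lemma~\ref{lemma:topological-invariance}), the paper simply invokes \cite[Prop.~6.3.7]{cd} and is finished in two sentences. That proposition gives continuity uniformly for all $R\in\{\Z_{\ell},\Q_{\ell},\Z/\ell^{r}\}$, so no case analysis is required.

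Your alternative strategy --- handling $\Z/\ell^{r}$ first via classical continuity of \'etale cohomology and then bootstrapping to $\Z_{\ell}$ through a Milnor-type sequence --- has a genuine gap at the $\Z_{\ell}$ step, and since your last paragraph says this is what you would actually write, it matters. Filtered colimits over $E$ do not commute with the inverse limit over $r$, and you have no finiteness hypotheses on the groups $H^{i}_{\BM}({X_{0}}_{E},\Z/\ell^{r}(j))$ that would force the relevant $\lim^{1}$ terms to vanish; the phrase ``$\lim^{1}$-free situations'' is doing work you have not justified. Moreover, in $\DM_{\h}(-,\Z_{\ell})$ the $\Z_{\ell}$-valued $\Hom$-groups are not literally defined as inverse limits of the $\Z/\ell^{r}$-valued ones, so the Milnor sequence is not immediately available at the level you need. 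Since you already identified the direct citation of continuity in \cite{cd} as the right move, write that one-line proof and drop the case-by-case reduction entirely.
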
\begin{proof} Following the proof of the preceding Lemma, we find that the groups $H^{i}_{\BMM}({{X_{0}}_{E}}_{\et},R(j))$ form a direct system over the set of all finitely generated field extensions $k_{0}\subset E$ with $E\subset k$. In particular, we get a natural map \[\underset{k_{0}\subset E\subset k}{\colim}H^{i}_{\BMM}({{X_{0}}_{E}}_{\et},R(j))\longrightarrow H^{i}_{\BMM}(X_{\et},R(j)),\] that we claim to be an isomorphism. This follows immediately from \cite[Prop. 6.3.7]{cd}.\end{proof}
 \subsection{Twisted Borel-Moore axioms}\label{sub:2} It is clear from the definition that the Borel-Moore motivic \'etale cohomology groups \eqref{def:Borel-Moore-Motivic} are contravariant functorial with respect to \'etale maps $j\colon U\to X$, such that $\dim U=\dim X$. Furthermore, we have: 
 \begin{proposition}\label{prop:twisted-borel-moore-axioms} Let $R\in\{\Z_{\ell},\Z/\ell^{r},\Q_{\ell}\}$. The family of contravariant functors \[X \mapsto H^{i}(X,R(j)),\ i,j\in\Z,\] where we write $H^{i}(X,R(j)):=H^{i}_{\BMM}(X_{\et},R(j))$ for simplicity, satisfies the following properties:\begin{enumerate}[label=$(\roman*)$]
\item \label{P1} For any proper morphism $f\colon X\to Y$ of algebraic $k$-schemes and of relative codimension $c:=\dim Y-\dim X$, there are functorial pushforward maps $f_{*}\colon H^{i-2c}(X,R(j-c))\to H^{i}(X,R(j))$ such that for any cartesian diagram \begin{center}
         \begin{tikzcd}
V \arrow[d, "f'"] \arrow[r, "j'"] & X \arrow[d, "f"] \\
U \arrow[r, "j"]                  & Y               
\end{tikzcd}
     \end{center} where the map $j\colon U\to Y$ is \'etale with $\dim U=\dim Y$ and $\dim V=\dim X$, we have $j^{\ast}f_{\ast}=f'_{\ast}j'^{\ast}$.
    \item\label{P2} For any closed pair $(X,Z)$ of algebraic schemes i.e. $Z\overset{\iota}{\hookrightarrow} X$ is a closed subscheme of $X$ of codimension $c:=\dim X-\dim Z$ and with complement $U\overset{j}{\hookrightarrow} X$ satisfying $\dim U=\dim X$, we have a localisation sequence 
    \[\cdots \longrightarrow H^{i-2c}(Z,R(j-c))\overset{\iota_{\ast}}{\longrightarrow} H^{i}(X,R(j))\overset{j^{\ast}}{\longrightarrow}H^{i}(U,R(j))\overset{\partial}{\longrightarrow}H^{i+1-2c}(Z,R(j-c))\longrightarrow \cdots,\] where $\partial$ is called the residue map. The localisation sequence is contravariantly functorial with respect to open immersions $(V,V\cap Z)\hookrightarrow (X,Z)$ and covariantly functorial with respect to cartesian proper morphisms of closed pairs $f:(X,Z)\to(Y,T)$, i.e. $Z=f^{-1}(T)$.
    \item\label{P3} For any algebraic scheme $X$, there is a natural long exact Bockstein sequence \[\cdots \longrightarrow H^{i}(X,\Z_{\ell}(i))\overset{\times \ell^{r}}{\longrightarrow} H^{i}(X,\Z_{\ell}(i))\longrightarrow H^{i}(X,\Z/\ell^{r}(i))\overset{\delta}{\longrightarrow} H^{i+1}(X,\Z_{\ell}(i))\longrightarrow\cdots,\] where $H^{i}(X,\Z_{\ell}(i))\to H^{i}(X,\Z/\ell^{r}(i))$ is the map obtained by functoriality in the coefficients and where the boundary map $\delta$ is called the Bockstein map. The sequence is contravariantly functorial with respect to \'etale maps $j\colon U\to X$ with $\dim U=\dim X$ and covariantly functorial with respect to proper morphisms $f\colon X\to Y$.
 \end{enumerate}
  \end{proposition}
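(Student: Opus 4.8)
The plan is to derive \ref{P1}--\ref{P3} directly from the Grothendieck six-functor formalism for $\DM_{\h}(-,R)$ recalled above. Write $\mathbb{D}_{X}:=\pi_{X}^{!}(\mathbbm{1}_{\Spec k})$ for the dualizing object, so that \eqref{def:Borel-Moore-Motivic} reads $H^{i}_{\BMM}(X_{\et},R(j))=\Hom_{\DM_{\h}(X,R)}(\mathbbm{1}_{X}(d_{X}-j)[2d_{X}-i],\mathbb{D}_{X})$. Two structural inputs are used repeatedly: for any morphism $f\colon X\to Y$ one has $f^{!}\mathbb{D}_{Y}\cong\mathbb{D}_{X}$ (from $\pi_{X}^{!}=f^{!}\pi_{Y}^{!}$), and for an \'etale map $j\colon U\to X$ with $\dim U=\dim X$ one has $j^{!}=j^{*}$, hence $j^{*}\mathbb{D}_{X}\cong\mathbb{D}_{U}$ and $j^{*}\mathbbm{1}_{X}\cong\mathbbm{1}_{U}$ (and $\iota^{*}\mathbbm{1}_{X}\cong\mathbbm{1}_{Z}$ for a closed immersion $\iota$). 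These, together with the projection formula, the units and counits of the $(f^{*},f_{*})$- and $(f_{!},f^{!})$-adjunctions, the proper base-change isomorphism, the localisation triangle of functors $\iota_{*}\iota^{!}\to\id\to j_{*}j^{*}\to\iota_{*}\iota^{!}[1]$, and change-of-coefficients, are all contained in \cite{cd}; the whole proof consists of combining them, the only genuine care being the bookkeeping of twists and shifts via $d_{X}=d_{Z}+c$ (resp.\ $d_{X}+c=d_{Y}$), which is exactly why the hypotheses ``$\dim U=\dim X$'' etc.\ appear.

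For \ref{P1}: since $f$ is proper, $f_{!}=f_{*}$, and using $d_{X}+c=d_{Y}$ the group $H^{i-2c}_{\BMM}(X_{\et},R(j-c))$ equals $\Hom_{\DM_{\h}(X)}(\mathbbm{1}_{X}(d_{Y}-j)[2d_{Y}-i],f^{!}\mathbb{D}_{Y})$; the $(f_{*},f^{!})$-adjunction and the projection formula (Tate twists on $X$ are pulled back from $Y$) rewrite this as $\Hom_{\DM_{\h}(Y)}((f_{*}\mathbbm{1}_{X})(d_{Y}-j)[2d_{Y}-i],\mathbb{D}_{Y})$, and precomposition with the twist-shift of the unit $\mathbbm{1}_{Y}\to f_{*}f^{*}\mathbbm{1}_{Y}=f_{*}\mathbbm{1}_{X}$ produces $f_{*}$. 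Functoriality $(g\circ f)_{*}=g_{*}\circ f_{*}$ follows from $(g\circ f)^{!}=f^{!}g^{!}$ and the compatibility of units with composition; the identity $j^{*}f_{*}=f'_{*}j'^{*}$ follows from the proper base-change isomorphism $j^{*}f_{*}\cong f'_{*}j'^{*}$ for the given cartesian square and its compatibility with units, noting that the \'etale restriction maps on $H^{*}_{\BMM}$ are literally given by applying $j^{*}$, resp.\ $j'^{*}$.

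For \ref{P2}: apply $\Hom_{\DM_{\h}(X)}(\mathbbm{1}_{X}(d_{X}-j)[2d_{X}-i],-)$ to the localisation triangle $\iota_{*}\mathbb{D}_{Z}\to\mathbb{D}_{X}\to j_{*}\mathbb{D}_{U}\to\iota_{*}\mathbb{D}_{Z}[1]$, obtained from the one in the first paragraph evaluated at $\mathbb{D}_{X}$ using $\iota^{!}\mathbb{D}_{X}\cong\mathbb{D}_{Z}$ and $j^{*}\mathbb{D}_{X}\cong\mathbb{D}_{U}$. The $(\iota^{*},\iota_{*})$- and $(j^{*},j_{*})$-adjunctions together with $\iota^{*}\mathbbm{1}_{X}\cong\mathbbm{1}_{Z}$, $j^{*}\mathbbm{1}_{X}\cong\mathbbm{1}_{U}$ and the dimension count turn the resulting long exact sequence into the stated localisation sequence, with $\partial$ its connecting map, and a short check identifies the map induced by the counit $\iota_{*}\mathbb{D}_{Z}=\iota_{*}\iota^{!}\mathbb{D}_{X}\to\mathbb{D}_{X}$ with the pushforward $\iota_{*}$ of \ref{P1}. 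Contravariance along open immersions $(V,V\cap Z)\hookrightarrow(X,Z)$ is the naturality of the localisation triangle under $*$-restriction. Covariance along a cartesian proper $f\colon(X,Z)\to(Y,T)$ with $Z=f^{-1}(T)$ requires that the localisation triangles for $(X,Z)$ and $(Y,T)$ be intertwined by the pushforwards of \ref{P1}; this reduces to the compatibility of $f_{!}=f_{*}$ with the localisation triangle (again part of \cite{cd}) combined with the base-change identities just established. I expect this last point to be the main obstacle: unlike the other assertions, which become immediate once the terms are identified, it requires assembling the naturality of the localisation triangle, proper base change, and the definition of $f_{*}$ into a single commuting diagram, and since it is precisely the compatibility invoked when comparing $\lambda^{i}_{X}$ with $\alpha^{i}_{X,\tors}$ it is worth carrying out with care.

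For \ref{P3}: the exact sequence $0\to\Z_{\ell}\xrightarrow{\ell^{r}}\Z_{\ell}\to\Z/\ell^{r}\to 0$ gives a distinguished triangle $\mathbbm{1}_{\Spec k}\xrightarrow{\ell^{r}}\mathbbm{1}_{\Spec k}\to\mathbbm{1}_{\Spec k}\otimes_{\Z_{\ell}}\Z/\ell^{r}\to\mathbbm{1}_{\Spec k}[1]$ in $\DM_{\h}(k,\Z_{\ell})$; applying the triangulated functor $\pi_{X}^{!}$ and then $\Hom_{\DM_{\h}(X,\Z_{\ell})}(\mathbbm{1}_{X}(d_{X}-i)[2d_{X}-i],-)$ produces the long exact sequence of \ref{P3}, its middle term being identified with $H^{i}_{\BMM}(X_{\et},\Z/\ell^{r}(i))$ through the extension-of-scalars adjunction $\DM_{\h}(X,\Z_{\ell})\rightleftarrows\DM_{\h}(X,\Z/\ell^{r})$, which carries $\mathbbm{1}_{X}^{\Z_{\ell}}$ to $\mathbbm{1}_{X}^{\Z/\ell^{r}}$ and commutes with $\pi_{X}^{!}$. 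Under this identification the map $H^{i}(X,\Z_{\ell}(i))\to H^{i}(X,\Z/\ell^{r}(i))$ agrees with the one induced by functoriality in the coefficients, and the functoriality of the whole sequence along \'etale $j\colon U\to X$ and proper $f\colon X\to Y$ is inherited from \ref{P1}--\ref{P2} because every map in sight is natural in the coefficient ring.
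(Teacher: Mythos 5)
Your proposal is correct and follows essentially the same route as the paper: the paper disposes of \ref{P1} and \ref{P2} by citing that $\DM_{\h}(-,R)$ satisfies the six-functor formalism of \cite{cd}, and proves \ref{P3} by applying $\pi_X^!$ to the coefficient triangle coming from $0\to\Z_\ell\xrightarrow{\ell^r}\Z_\ell\to\Z/\ell^r\to0$ and then the functor $\Hom_{\DM_{\h}(X,\Z_\ell)}(\Z_\ell(d_X-j)[2d_X-i],-)$, exactly as you do. Your write-up merely makes explicit the bookkeeping (dualizing object, $(f_!,f^!)$-adjunction with $f_!=f_*$, twist/shift arithmetic, and the change-of-coefficients identification of the middle term) that the paper leaves implicit.
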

  \begin{proof} The properties \ref{P1} and \ref{P2} listed in the proposition are immediate consequences of the fact that the triangulated premotivic category of $\h$-motives $\DM_{\h}(-,R)$ satisfies Grothendieck's $6$ functors formalism (see \cite[Thm. 5.6.2]{cd} and \cite[Def. A.1.10]{cd}). For \ref{P3} one notes that $\mathbbm{1}_{\Spec(k)}=\Z_{\ell}(0)$ is simply the constant $h$-sheaf associated to the ring $\Z_{\ell}$. Thus we obtain a distinguished triangle \begin{align}\label{eq:Bockstein-sequence}\pi^{!}_{X}\Z_{\ell}(0)\overset{\times \ell^{r}}{\longrightarrow}\pi^{!}_{X}\Z_{\ell}(0)\longrightarrow\pi^{!}_{X}\Z/\ell^{r}\longrightarrow \pi_{X}^{!}\Z_{\ell}(0)[1]\end{align} in $\DM_{\h}(X,\Z_{\ell})$ that is induced from the relevant short exact sequence in $\Sh_{\h} (k,\Z_{\ell})$. Finally, we apply the cohomological functor \[H^{i}_{\BMM}(X_{\et},-(j)):=\Hom_{\DM_{\h}(X,\Z_{\ell})}(\Z_{\ell}(d_{X}-j)[2d_{X}-i],(-))\] to \eqref{eq:Bockstein-sequence} to get the desired long exact sequence.\end{proof}
  Let $X$ be an algebraic scheme and let $x\in X$ be any point. In what follows, we adopt the notation: \begin{align}\label{def:motivic-cohomology-of-point} H^{i}(x,R(j)):=\underset{\emptyset\neq U\subset\overline{\{x\}}}{\colim}H^{i}_{\BMM}(U_{\et},R(j)),\end{align} where $U$ runs through the non-empty open subsets of $\overline{\{x\}}$. We shall use the following simple calculations: \begin{lemma}\label{lemma:cohomology-of-point} Let $X$ be an algebraic $k$-scheme and let $\ell$ be a prime invertible in $k$. Fix a point $x\in X$. For $R\in \{\Z/\ell^{r},\Z_{\ell},\Q_{\ell}\}$, one has: \begin{enumerate}[label=$(\roman*)$]
      \item\label{P4} $H^{i}(x,R(j))\cong H^{i}_{L}(k(x),R(j))$, where $k(x)$ is the residue field of the point $x\in X$.
      \item\label{P5} $H^{1}(x,\Z_{\ell}(1))\cong k(x)^{\ast}\otimes\Z_{\ell}$ and $H^{1}(x,\Z/\ell^{r}(1))\cong k(x)^{\ast}/\ell^{r}$.
      \item \label{P6} $H^{i}(x,R(j))=0$, when $i<0$ and $j\leq 0$. Furthermore, $H^{i}(x,\Q_{\ell}(j))=0$ for all $i>j$.
      \item\label{P7} There exists a distinguished class $[x]\in H^{0}(x,R(0))$, such that $H^{0}(x,R(0))=R[x]$ and such that the isomorphism $R\cong H^{0}(x,R(0)),1 \to [x]$ is functorial in the coefficient $R$.
  \end{enumerate}\end{lemma}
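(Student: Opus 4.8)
The plan is to prove \ref{P4} first, and then deduce \ref{P5}, \ref{P6} and \ref{P7} from it by elementary computations with the Lichtenbaum cohomology of a field. Write $Z:=\overline{\{x\}}$ with its reduced, integral structure; then $k(x)$ is the function field of $Z$ and, by definition \eqref{def:motivic-cohomology-of-point}, $H^{i}(x,R(j))=\colim_{\emptyset\neq U\subset Z}H^{i}_{\BMM}(U_{\et},R(j))$ with transition maps the $\et$-functorial restrictions. The point to keep in mind throughout is that every open $U$ occurring in this colimit has $\dim U=\dim Z$, so that the structure maps $\pi_{U}$ all have the same relative dimension and no jump of dimension occurs.

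For \ref{P4} I would first treat the case where $k$ is perfect. Then $Z$ has a dense open subscheme smooth over $k$, and one may restrict the colimit to the cofinal subsystem of $U$ contained in this smooth locus, which can moreover be taken affine. For such $U$ the comparison \eqref{etale-vs-lichtenbaum} identifies $H^{i}_{\BMM}(U_{\et},R(j))$ with $H^{i}_{L}(U,R(j))$, naturally in $U$; since $\Spec k(x)$ is the cofiltered limit of these $U$ along the (affine) localization maps, the continuity property \cite[Prop. 6.3.7]{cd} then identifies $\colim_{U}H^{i}_{L}(U,R(j))$ with $H^{i}_{L}(k(x),R(j))$. For general $k$, set $k':=k^{\text{perf}}$. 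The base change $Z_{k'}\to Z$ is a universal homeomorphism (being integral, surjective and radicial), so the open subsets of $(Z_{k'})_{\red}$ correspond, compatibly with inclusions, to those of $Z$; applying Lemma \ref{lemma:topological-invariance} termwise and using this correspondence rewrites $H^{i}(x,R(j))$ as the colimit of $H^{i}_{\BMM}(W_{\et},R(j))$ over the opens $W$ of the reduced integral finite type $k'$-scheme $(Z_{k'})_{\red}$. Its function field $\kappa$ is purely inseparable over $k(x)$, so the already-settled perfect case over $k'$ gives $H^{i}(x,R(j))\cong H^{i}_{L}(\kappa,R(j))$, and it remains to invoke the invariance of Lichtenbaum cohomology with coefficients in $R\in\{\Z_{\ell},\Q_{\ell},\Z/\ell^{r}\}$ under purely inseparable extensions (again a consequence of the topological invariance of $\DM_{\h}(-,R)$, cf. \cite[Prop. 6.3.16]{cd}). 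This completes \ref{P4}.

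Given \ref{P4}, the rest is bookkeeping with the motivic complexes of weight $0$ and $1$ and with Galois cohomology. For \ref{P7}, $H^{0}(x,R(0))\cong H^{0}_{L}(k(x),R(0))=H^{0}_{\et}(\Spec k(x),R)=R$, because $\Z(0)$ is the constant complex and $\Spec k(x)$ is connected; the class $[x]$ is the image of $1\in R$, and functoriality in $R$ is clear. For \ref{P5}, one uses that $\Z(1)\cong\mathbb{G}_{m}[-1]$ (also étale-motivically), so for any field $F$ we have $H^{1}_{L}(F,\Z(1))=H^{0}_{\et}(\Spec F,\mathbb{G}_{m})=F^{\ast}$ and $H^{2}_{L}(F,\Z(1))=H^{1}_{\et}(\Spec F,\mathbb{G}_{m})=\Pic(\Spec F)=0$; a universal coefficient argument (trivial for the flat coefficients $\Z_{\ell},\Q_{\ell}$, and using $H^{2}_{L}(F,\Z(1))=0$ for $\Z/\ell^{r}$) then yields $H^{1}_{L}(F,R(1))\cong F^{\ast}\otimes_{\Z}R$, in particular $H^{1}_{L}(F,\Z_{\ell}(1))\cong F^{\ast}\otimes\Z_{\ell}$ and $H^{1}_{L}(F,\Z/\ell^{r}(1))\cong F^{\ast}/\ell^{r}$, the latter being the Kummer isomorphism. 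Taking $F=k(x)$ and combining with \ref{P4} gives \ref{P5}. For \ref{P6}, by \eqref{etale-vs-motivic} one has $H^{i}_{L}(F,\Q_{\ell}(j))\cong H^{i}_{M}(F,\Q(j))\otimes\Q_{\ell}$, which vanishes for $i>j$, and for $j\leq0$ equals $\Q_{\ell}$ in degree $0$ when $j=0$ and $0$ otherwise, by the standard vanishing of motivic cohomology of a field \cite{v-s-f}; and for $R\in\{\Z/\ell^{r},\Z_{\ell}\}$ and $i<0$ the groups $H^{i}_{L}(F,R(j))$ are computed, via \eqref{etale=motivic}, by the étale cohomology of $\mu_{\ell^{r}}^{\otimes j}$ (resp. its $\ell$-adic analogue) and hence vanish. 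Combining with \ref{P4} proves \ref{P6}.

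The part I expect to be the real obstacle is \ref{P4}: one must simultaneously get the Borel--Moore formalism to collapse to ordinary Lichtenbaum cohomology — which is exactly why it matters that the colimit runs over opens of fixed dimension, so that the twist from $\pi^{!}$ in \eqref{def:Borel-Moore-Motivic} is compensated and \eqref{etale-vs-lichtenbaum} applies — and handle an imperfect base field through Lemma \ref{lemma:topological-invariance} while carefully matching open subsets across the universal homeomorphism $Z_{k'}\to Z$. Parts \ref{P5}--\ref{P7} are then routine.
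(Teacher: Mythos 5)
Your handling of \ref{P4}, \ref{P5} and \ref{P7} follows essentially the same route as the paper (perfect closure plus Lemma \ref{lemma:topological-invariance}, identification with Lichtenbaum cohomology of smooth opens via \eqref{etale-vs-lichtenbaum}, passage to the limit over opens, the weight-one complex $\mathbb{G}_{m}[-1]$, the constant weight-zero complex), and those parts are fine. The genuine problem is the $\Z_{\ell}$-case of \ref{P6}. You dispose of it by asserting that for $R=\Z_{\ell}$ and $i<0$ the groups $H^{i}_{L}(F,\Z_{\ell}(j))$ are computed, ``via \eqref{etale=motivic}, by \dots its $\ell$-adic analogue'' of \'etale cohomology. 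But \eqref{etale=motivic} is the rigidity equivalence $D(-_{\et},\Z/\ell^{r})\cong\DM_{\h}(-,\Z/\ell^{r})$ for torsion coefficients only; there is no such equivalence with $\Z_{\ell}$-coefficients, and $H^{i}_{\BMM}(-_{\et},\Z_{\ell}(j))$ is in general \emph{not} continuous/pro-\'etale $\ell$-adic cohomology. The discrepancy between the two theories is precisely what the comparison maps $\rho^{i,j}_{X}$ of Lemma \ref{lemma:comparison maps} measure (they merely have divisible kernel and torsion-free cokernel, Lemma \ref{lemma:prop-rho} \ref{rho-3}); moreover, if your identification held in negative degrees for arbitrary twists, the restriction $j\leq 0$ in \ref{P6} would be superfluous, whereas Remark \ref{rem:Borel-Moore-axioms} explains that removing it amounts to the Beilinson--Soul\'e vanishing conjecture. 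So this step is unjustified as written and cannot be argued by rigidity.

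The gap closes immediately with material you already have, and this is how the paper argues: apply the Bockstein sequence of Proposition \ref{prop:twisted-borel-moore-axioms} \ref{P3} to obtain the exact sequence
\begin{equation*}
H^{i-1}(x,\Q_{\ell}/\Z_{\ell}(j))\longrightarrow H^{i}(x,\Z_{\ell}(j))\longrightarrow H^{i}(x,\Q_{\ell}(j)).
\end{equation*}
For $i<0$ the left-hand group vanishes by your (correct) $\Z/\ell^{r}$-computation, since it is \'etale cohomology of a field in negative degree, and for $j\leq 0$ the right-hand group vanishes by your (correct) rational computation via \eqref{etale-vs-motivic}; hence $H^{i}(x,\Z_{\ell}(j))=0$ for $i<0$ and $j\leq 0$. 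With this substitution your proof of the lemma is complete and agrees with the paper's.
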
\begin{proof} Let $k':=k^{\text{perf}}$ denote the perfect closure of the field $k$. Since the natural map $p:X_{k'}\to X$ is a universal homeomorphism, Lemma \ref{lemma:topological-invariance} yields that the colimit in \eqref{def:motivic-cohomology-of-point} can be taken over the smooth non-empty open subsets of $\overline{\{x'\}}$, where $x'$ is the unique point of $X_{k'}$ lying over $x\in X$. By \eqref{etale-vs-lichtenbaum}, we deduce \[H^{i}(x,R(j)):=\underset{\emptyset\neq U\subset\overline{\{x'\}}}{\colim}H^{i}_{L}(U,R(j))\cong H^{i}_{L}(k'(x'),R(j)),\] where as before the colimit runs through the non-empty smooth open subsets of $\overline{\{x'\}}$ and where the last isomorphism follows from \cite[Lemma 3.9]{mvw}. The field extension $k(x)\subset k'(x')$ is purely inseperable and thus \eqref{etale-vs-lichtenbaum} together with Lemma \ref{lemma:topological-invariance} also imply that $H^{i}_{L}(k(x),R(j))\cong H^{i}_{L}(k'(x'),R(j))$. This proves \ref{P4}. \par To see \ref{P5}, we use \ref{P4} as follows. Lichtenbaum cohomology is given as the \'etale hypercohomology of Bloch's cycle complex $R_{X}(j)=\mathcal{Z}^{j}(-,\ast)_{\et}[-2j]\otimes^{\mathbb{L}}R$. The complex $R_{X}(1)$ is known to be quasi-isomorpic to $\mathbb{G}_{m}[-1]\otimes^{\mathbb{L}}R$ (cf. \cite[Thm. 4.1]{mvw}). In addition, the Kummer sequence provides with the quasi-isomorphism $\mathbb{G}_{m}[-1]\otimes^{\mathbb{L}}\Z/\ell^{r}\cong\mu_{\ell^{r}}$. The result follows.\par To prove \ref{P6}, we look at the exact sequence induced by the Bockstein sequence (see Prop. \ref{prop:twisted-borel-moore-axioms} \ref{P3}): \[H^{i-1}(x,\Q_{\ell}/\Z_{\ell}(j))\longrightarrow H^{i}(x,\Z_{\ell}(j))\longrightarrow H^{i}(x,\Q_{\ell}(j)).\] Thus it suffices to show that $H^{i}(x,R(j))=0$ for $i<0$ and $j\leq 0$, when $R\in\{\Z/\ell^{r},\Q_{\ell}\}$. We clearly have $H^{i}_{L}(k(x),\Z/\ell^{r}(j))=H^{i}(k(x)_{\et},\mu_{\ell^{r}}^{\otimes j})=0$ for all $i<0$. Furthermore, by \eqref{etale-vs-motivic}, the natural map $$H^{i}_{M}(k(x),\Q_{\ell}(j))\longrightarrow H^{i}_{L}(k(x),\Q_{\ell}(j))$$ is an isomorphism. The first vanishing result thus follows from \cite[Cor. 2]{voe}. Concerning the second claim of \ref{P6}, we use \cite[Cor. 2]{voe} once again to conclude $$H^{i}_{M}(k(x),\Q_{\ell}(j))=\CH^{j}(\Spec k(x), 2j-i; \Q_{\ell})=0$$ for all $i>j$. \par Finally, we show \ref{P7}. By Lemma \ref{lemma:topological-invariance}, we may assume that the ground field $k$ is perfect. For any smooth $k$-scheme $X$, we have a natural isomorphism $\CH^{0}(X)\otimes R\cong H^{0}_{M}(X,R(0))=H^{0}_{L}(X,R(0))$. Thus for any non-empty and smooth open subset $U\subset\overline{\{x\}}$, there is a canonical fundamental class $[U]\in H^{0}_{L}(U,R(0))$ with $H^{0}_{L}(U,R(0))=R[U]$ and where the class $[U]$ is compatible with restriction to open subsets. Hence, it provides with a canonical class $[x]\in H^{0}(x,R(0))$ in the limit, as required.\end{proof}

Let $X$ be a variety over $k$ with generic point $\eta\in X$ and let $x\in X^{(1)}$. By virtue of the localisation sequence (see Prop. \ref{prop:twisted-borel-moore-axioms} \ref{P2}), one obtains a map 
\begin{align}\label{def:residue-map} 
\partial: k(\eta)^{\ast}\otimes\Z_{\ell}=H^{1}(\eta,\Z_{\ell}(1))\longrightarrow H^{0}(x,\Z_{\ell}(0))=\Z_{\ell}[x].\end{align} Simply by functoriality it coincides with the composite (cf. Lemma \ref{lemma:comparison maps} below) \[k(\eta)^{\ast}\otimes\Z_{\ell}\overset{\epsilon}{\longrightarrow}H^{1}_{cont}(\eta,\Z_{\ell}(1))\overset{\partial}{\longrightarrow}\Z_{\ell}[x],\] where $\epsilon$ is the map induced from the Kummer sequence (see \cite[(6.3)]{Sch-refined}). It follows that for a regular point $x\in X^{(1)}$, the map \eqref{def:residue-map} is just $-\nu_{x}$, where $\nu_{x}$ stands for the valuation on $k(\eta)$ given by $x$ (see \cite[Prop. 6.6]{Sch-refined}).  
\begin{remark}\label{rem:Borel-Moore-axioms}Following \cite[Def. 4.2 and Def. 4.4]{Sch-refined}, the above (Prop. \ref{prop:twisted-borel-moore-axioms} and Lemma \ref{lemma:cohomology-of-point}) show that the Borel-Moore motivic \'etale cohomology $H^{i}_{\BMM}({-}_{\et},R(j))$, $R\in\{\Z/\ell^{r},\Z_{\ell},\Q_{\ell}\}$, where $\ell$ is a prime invertible in $k$ and $i,j\in\Z$ forms an $\ell$-adic twisted Borel-Moore cohomology theory on the category of algebraic $k$-schemes. To be precise, the property $(P4)$ from \cite[Def. 4.2]{Sch-refined}, i.e. $H^{i}(x,R(j))=0$ for $i<0$ has been substituted by the weaker condition of Lemma \ref{lemma:cohomology-of-point} \ref{P6} which suffices for our purposes (see Cor. \ref{cor:vanishing-consequence} below). Furthermore, we note that the Beilinson-Soul\'e vanishing conjecture \cite{Be} predicts that $(P4)$ holds. We could possibly use the set of coefficients $\{\Z/n,\Z[\frac{1}{p}],\Q\}$, instead, where $p$ is the characteristic exponent of $k$. We prefer to work $\ell$-adically, as we compare this theory with $\ell$-adic Borel-Moore pro-\'etale cohomology \cite[Sec. 6.1]{Sch-refined} (see subsection \ref{sub:3} below).\end{remark}
For $s\geq0$, we let $F_{s}X:=\{x\in X |\dim X-\dim\overline{\{x\}}\leq s\}$ and define as in \cite[Sec. 5]{Sch-refined} \begin{align}
    H^{i}_{\BMM}({F_{s}X}_{\et},R(j)):=\underset{F_{s}X\subset U\subset X}{\colim}H^{i}_{\BMM}(U_{\et},R(j)),
\end{align} where the colimit is taken over all open subsets $U\subset X$ with $F_{s}X\subset U$. The consequences of the localisation sequence (Prop. \ref{P2}) from \cite[Sec. 5.2.]{Sch-refined} hold true also in our situation. In particular, we have a long exact sequence (see \cite[Lemma 5.8]{Sch-refined}) \begin{align}\label{def:Gysin-sequence} \longrightarrow
   H^i_{\BMM}({F_sX}_{\et},R(j))\longrightarrow   H^i_{\BMM}({F_{s-1}X}_{\et},R(j))  \stackrel{\del}\longrightarrow \bigoplus_{x\in X^{(s)}}H^{i+1-2s}(x,R(j-s)) \stackrel{\iota_\ast} \longrightarrow    H^{i+1}_{\BMM}({F_sX}_{\et},R(j)) 
\end{align} Note that the lower bound given in the following Corollary is different from the one used in \cite[Cor. 5.10]{Sch-refined}.\begin{corollary}\label{cor:vanishing-consequence} Let $X$ be an algebraic scheme over a field $k$ and let $\ell$ be a prime invertible in $k$. Let $R\in\{\Z/\ell^{r},\Z_{\ell},\Q_{\ell}\}$. Then $H^{i}_{\BMM}({F_{s}X}_{\et},R(j))\cong H^{i}_{\BMM}(X_{\et},R(j))$ for all $s\geq\max\{\lceil i/2\rceil,j-1\}$.\end{corollary}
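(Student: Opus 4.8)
The plan is to compare the stages of the filtration $\cdots\subset F_{s-1}X\subset F_sX\subset\cdots$ one codimension at a time by means of the long exact sequence \eqref{def:Gysin-sequence}, and to kill the ``relative'' terms supported on codimension-$s$ points using the vanishing recorded in Lemma~\ref{lemma:cohomology-of-point}~\ref{P6}.

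First I would reduce everything to a single inductive step. Since $\dim\overline{\{x\}}\ge 0$ for every point $x\in X$, one has $F_sX=X$ as soon as $s\ge d_X:=\dim X$; in particular $H^{i}_{\BMM}({F_{d_X}X}_{\et},R(j))=H^{i}_{\BMM}(X_{\et},R(j))$, and if $\max\{\lceil i/2\rceil,\,j-1\}\ge d_X$ there is nothing to prove. So it suffices to show that the natural map
\[
H^{i}_{\BMM}({F_{s}X}_{\et},R(j))\longrightarrow H^{i}_{\BMM}({F_{s-1}X}_{\et},R(j))
\]
appearing in \eqref{def:Gysin-sequence} is an isomorphism for every $s$ with $s\ge \max\{\lceil i/2\rceil,\,j-1\}+1$, and then to compose these isomorphisms for $s=\max\{\lceil i/2\rceil,\,j-1\}+1,\dots,d_X$, ending at $F_{d_X}X=X$.

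The key step is to read off the flanking terms in \eqref{def:Gysin-sequence}. There the displayed map sits in an exact sequence
\[
\bigoplus_{x\in X^{(s)}}H^{i-2s}(x,R(j-s))\ \xrightarrow{\ \iota_\ast\ }\ H^{i}_{\BMM}({F_{s}X}_{\et},R(j))\ \longrightarrow\ H^{i}_{\BMM}({F_{s-1}X}_{\et},R(j))\ \xrightarrow{\ \partial\ }\ \bigoplus_{x\in X^{(s)}}H^{i+1-2s}(x,R(j-s)),
\]
so it is enough to see that both ends vanish in the stated range of $s$. For $s\ge\max\{\lceil i/2\rceil,\,j-1\}+1$ one has $2s\ge 2\lceil i/2\rceil+2\ge i+2$, hence $i+1-2s\le -1<0$ and a fortiori $i-2s<0$; moreover $s\ge j$, so the twist $j-s\le 0$. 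Each $x\in X^{(s)}$ has a residue field $k(x)$, so Lemma~\ref{lemma:cohomology-of-point}~\ref{P4} identifies these groups with $H^{\ast}_{L}(k(x),R(j-s))$, and Lemma~\ref{lemma:cohomology-of-point}~\ref{P6} then gives $H^{i-2s}(x,R(j-s))=0=H^{i+1-2s}(x,R(j-s))$. This makes both flanking terms vanish, the middle map is an isomorphism, and the corollary follows by composition.

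I do not expect a serious obstacle here; the argument is essentially bookkeeping with the localisation sequence. The one point that deserves care — and the reason the bound is $\max\{\lceil i/2\rceil,\,j-1\}$ rather than the cleaner $\lceil i/2\rceil$ of \cite[Cor.~5.10]{Sch-refined} — is that the vanishing available to us, Lemma~\ref{lemma:cohomology-of-point}~\ref{P6}, requires \emph{both} a negative cohomological degree \emph{and} a non-positive twist, not just a negative degree; this is exactly what forces the extra condition $s\ge j$, i.e. $j-s\le 0$. One should also keep the degenerate cases in view: if $X^{(s)}=\emptyset$ the relative terms are automatically zero, and if $\max\{\lceil i/2\rceil,\,j-1\}\ge\dim X$ the assertion is vacuous.
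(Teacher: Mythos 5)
Your argument is correct and is essentially the paper's own proof: both ascend the filtration one step at a time via the localisation sequence \eqref{def:Gysin-sequence} and kill the flanking terms $\bigoplus_{x\in X^{(s)}}H^{i-2s}(x,R(j-s))$ and $\bigoplus_{x\in X^{(s)}}H^{i+1-2s}(x,R(j-s))$ using the first vanishing statement of Lemma \ref{lemma:cohomology-of-point} \ref{P6}, then conclude since $F_sX=X$ for $s\geq\dim X$. Your remark that the bound $\max\{\lceil i/2\rceil, j-1\}$ (rather than $\lceil i/2\rceil$) is forced by the twist condition $j-s\leq 0$ matches the paper's own comment preceding the corollary.
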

\begin{proof} The localisation sequence \eqref{def:Gysin-sequence} together with the first vanishing result of Lemma \ref{lemma:cohomology-of-point} \ref{P6} imply that $H^{i}_{\BMM}({F_{s}X}_{\et},R(j))\cong H^{i}_{\BMM}({F_{s+1}X}_{\et},R(j))$ for all $s\geq\max\{\lceil i/2\rceil,j-1\}$. Since $F_{s}X=X$ for $s\geq\dim X$, the result follows.\end{proof} 
Let $\ell$ be a prime invertible in $k$. By \cite[Lemma 7.1]{Sch-refined} and Lemma \ref{lemma:comparison maps} below, we deduce the following description of the $\ell$-adic Chow group  \begin{align}\label{def:ell-adic-Chow-group}\CH^{i}(X)_{\Z_{\ell}}=\frac{\bigoplus_{x\in X^{(i)}}\Z_{\ell}[x]}{\bigoplus_{x\in X^{(i-1)}}H^{1}(x,\Z_{\ell}(1))\overset{\partial\circ\iota_{\ast}}{\longrightarrow}\bigoplus_{x\in X^{(i)}}\Z_{\ell}[x]},\end{align} where $\partial\circ\iota_{\ast}$ is simply the composition \begin{align}\label{eq:comp}\bigoplus_{x\in X^{(i-1)}}H^{1}(x,\Z_{\ell}(1))\overset{\iota_{\ast}}{\longrightarrow}H_{\BMM}^{2i-1}({F_{i-1}X}_{\et},\Z_{\ell}(i))\overset{\partial}{\longrightarrow}\bigoplus_{x\in X^{(i)}}\Z_{\ell}[x].\end{align} Consider the following pushforward map that comes from the localisation sequence \eqref{def:Gysin-sequence}\begin{align}\label{eq:pushforward}\bigoplus_{x\in X^{(i)}}H^{0}(x,\Z_\ell(0))= \bigoplus_{x\in X^{(i)}} \Z_\ell[x] \stackrel{\iota_\ast} \longrightarrow    H^{2i}_{\BMM}({F_iX}_{\et},\Z_\ell(i))=H^{2i}_{\BMM}(X_{\et},\Z_\ell(i))\end{align} and where for the last equality we used Corollary \ref{cor:vanishing-consequence}. We obtain the following cycle class map.
\begin{lemma} Let $X$ be an algebraic $k$-scheme and let $\ell$ be a prime invertible in $k$. There is a well-defined cycle class map \begin{align}\label{def:kahn-map}
    \tilde{\alpha}^{i}_{X}\colon\CH^{i}(X)_{\Z_{\ell}}\longrightarrow H^{2i}_{\BMM}(X_{\et},\Z_\ell(i)).
\end{align}
\end{lemma}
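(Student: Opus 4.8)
The plan is to obtain $\tilde{\alpha}^{i}_{X}$ simply as the map induced by the pushforward \eqref{eq:pushforward} on the presentation \eqref{def:ell-adic-Chow-group} of the $\ell$-adic Chow group. By \eqref{def:ell-adic-Chow-group}, $\CH^{i}(X)_{\Z_{\ell}}$ is the cokernel of the composite $\partial\circ\iota_{\ast}$ from \eqref{eq:comp}, so the only thing to verify is that the pushforward $\iota_{\ast}\colon\bigoplus_{x\in X^{(i)}}\Z_{\ell}[x]\to H^{2i}_{\BMM}(X_{\et},\Z_{\ell}(i))$ of \eqref{eq:pushforward} annihilates the subgroup of relations, i.e.\ that $\iota_{\ast}\circ(\partial\circ\iota_{\ast})=0$ as a homomorphism $\bigoplus_{x\in X^{(i-1)}}H^{1}(x,\Z_{\ell}(1))\to H^{2i}_{\BMM}(X_{\et},\Z_{\ell}(i))$.

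First I would identify the two maps $\partial$ and $\iota_{\ast}$ occurring in the tail of \eqref{eq:comp} and in \eqref{eq:pushforward} as consecutive arrows of a single localisation sequence. Indeed, take \eqref{def:Gysin-sequence} for the pair $F_{i-1}X\subset F_{i}X$, that is, with $s=i$, twist $j=i$ and cohomological index $2i-1$; since $H^{0}(x,\Z_{\ell}(0))=\Z_{\ell}[x]$ by Lemma \ref{lemma:cohomology-of-point} \ref{P7}, it reads
\[\cdots\to H^{2i-1}_{\BMM}({F_{i-1}X}_{\et},\Z_{\ell}(i))\xrightarrow{\ \partial\ }\bigoplus_{x\in X^{(i)}}\Z_{\ell}[x]\xrightarrow{\ \iota_{\ast}\ }H^{2i}_{\BMM}({F_{i}X}_{\et},\Z_{\ell}(i))\to\cdots,\]
and here $H^{2i}_{\BMM}({F_{i}X}_{\et},\Z_{\ell}(i))=H^{2i}_{\BMM}(X_{\et},\Z_{\ell}(i))$ by Corollary \ref{cor:vanishing-consequence}, since $i\geq\max\{\lceil 2i/2\rceil,i-1\}$. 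The map $\partial$ appearing here is exactly the last arrow of \eqref{eq:comp}, and the map $\iota_{\ast}$ appearing here is exactly the map \eqref{eq:pushforward}. Exactness of \eqref{def:Gysin-sequence} at the term $\bigoplus_{x\in X^{(i)}}\Z_{\ell}[x]$ therefore gives $\iota_{\ast}\circ\partial=0$, and precomposing once more with the $\iota_{\ast}$ from \eqref{eq:comp} yields $\iota_{\ast}\circ(\partial\circ\iota_{\ast})=0$, as needed. Hence $\iota_{\ast}$ factors through the cokernel \eqref{def:ell-adic-Chow-group}, and we define $\tilde{\alpha}^{i}_{X}$ to be the resulting homomorphism.

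I do not expect any genuine difficulty here; the argument is really just bookkeeping with the filtration $F_{\bullet}X$. The one point to be careful about is to keep the two occurrences of $\iota_{\ast}$ apart: the $\iota_{\ast}$ inside \eqref{eq:comp} is the Gysin map of the $s=i-1$ localisation sequence, landing in $H^{2i-1}_{\BMM}({F_{i-1}X}_{\et},\Z_{\ell}(i))$, whereas what forces the composition to vanish is exactness at the next term of the $s=i$ sequence, where the relevant boundary map $\partial$ and the pushforward $\iota_{\ast}$ of \eqref{eq:pushforward} sit side by side. A secondary, equally routine point is the identification $H^{2i}_{\BMM}({F_{i}X}_{\et},\Z_{\ell}(i))=H^{2i}_{\BMM}(X_{\et},\Z_{\ell}(i))$, which is precisely Corollary \ref{cor:vanishing-consequence} in cohomological degree $2i$.
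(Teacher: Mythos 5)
Your argument is correct and is essentially the paper's own proof: the paper likewise defines $\tilde{\alpha}^{i}_{X}$ by factoring the pushforward \eqref{eq:pushforward} through the presentation \eqref{def:ell-adic-Chow-group}, with the vanishing of the composite with \eqref{eq:comp} following from exactness of the localisation sequence \eqref{def:Gysin-sequence} (with $s=i$, $j=i$) and the identification $H^{2i}_{\BMM}({F_iX}_{\et},\Z_{\ell}(i))=H^{2i}_{\BMM}(X_{\et},\Z_{\ell}(i))$ from Corollary \ref{cor:vanishing-consequence}. You merely spell out the bookkeeping that the paper leaves implicit.
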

\begin{proof} We need to show that the map \eqref{eq:pushforward} factors through the $\ell$-adic Chow group $\CH^{i}(X)_{\Z_{\ell}}$. By \eqref{def:ell-adic-Chow-group}, it suffices to show that the composition of $\iota_{\ast}$ from \eqref{eq:pushforward} with the map of \eqref{eq:comp} gives the zero map. The latter is evident by exactness of the localisation sequence \eqref{def:Gysin-sequence}.\end{proof}
\begin{corollary}\label{cor:etale-motivic-cycle-class-map} Let $X$ be smooth and equi-dimensional algebraic $k$-scheme. Then the \'etale motivic cycle class map \eqref{def:motivic-cycle-map} agrees with \eqref{def:kahn-map}.\end{corollary}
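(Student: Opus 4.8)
The plan is to compare the two maps on generators. Since $X$ is smooth and equi-dimensional, \eqref{etale-vs-lichtenbaum} identifies the target $H^{2i}_{\BMM}(X_{\et},\Z_{\ell}(i))$ of $\tilde{\alpha}^{i}_{X}$ with the target $H^{2i}_{L}(X,\Z_{\ell}(i))$ of $\alpha^{i}_{X}$, so the claim is that the resulting homomorphisms $\CH^{i}(X)_{\Z_{\ell}}\to H^{2i}_{L}(X,\Z_{\ell}(i))$ agree. By the presentation \eqref{def:ell-adic-Chow-group}, $\CH^{i}(X)_{\Z_{\ell}}$ is generated by the classes $[x]$, $x\in X^{(i)}$, so it is enough to treat such a generator. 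Invoking Lemma \ref{lemma:topological-invariance} and the analogous invariance of Lichtenbaum and motivic cohomology under base change along $k\hookrightarrow k^{\mathrm{perf}}$ (which follows from \eqref{etale-vs-lichtenbaum} and \eqref{etale-vs-motivic}), I would first reduce to the case $k$ perfect.

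Fix $x\in X^{(i)}$ with closure $Z:=\overline{\{x\}}$; as $k$ is perfect, choose a dense open $U\subseteq Z$ smooth over $k$ and put $V:=X\setminus(Z\setminus U)$. Then $V$ is open in $X$ and contains $U=Z\cap V$ as a smooth closed subscheme of pure codimension $i$, and $F_{i}X\subseteq V$ since $Z\setminus U$ has codimension $\geq i+1$ in $X$. Unwinding \eqref{eq:pushforward}, the class $\tilde{\alpha}^{i}_{X}([x])$ is the image of the fundamental class $[U]\in H^{0}_{\BMM}(U_{\et},\Z_{\ell}(0))=\CH^{0}(U)\otimes\Z_{\ell}$ of Lemma \ref{lemma:cohomology-of-point} \ref{P7} under
\[ H^{0}_{\BMM}(U_{\et},\Z_{\ell}(0))\overset{\iota_{\ast}}{\longrightarrow}H^{2i}_{\BMM}(V_{\et},\Z_{\ell}(i))\longrightarrow H^{2i}_{\BMM}({F_{i}X}_{\et},\Z_{\ell}(i))=H^{2i}_{\BMM}(X_{\et},\Z_{\ell}(i)), \]
where $\iota\colon U\hookrightarrow V$ is the closed immersion, $\iota_{\ast}$ is the pushforward of Proposition \ref{prop:twisted-borel-moore-axioms} \ref{P1}, and the last equality is Corollary \ref{cor:vanishing-consequence}. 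On the other hand, $\alpha^{i}_{X}([x])$ is the image of the cycle class $\mathrm{cl}(Z)\in H^{2i}_{M}(X,\Z_{\ell}(i))=\CH^{i}(X)_{\Z_{\ell}}$ under the comparison map $H^{2i}_{M}(X,\Z_{\ell}(i))\to H^{2i}_{L}(X,\Z_{\ell}(i))\cong H^{2i}_{\BMM}(X_{\et},\Z_{\ell}(i))$; and by the localization property of Bloch's higher Chow groups the class $\mathrm{cl}(Z)$ is itself the image of the motivic fundamental class $[U]_{M}\in H^{0}_{M}(U,\Z_{\ell}(0))$ under the entirely parallel motivic composite $H^{0}_{M}(U,\Z_{\ell}(0))\to H^{2i}_{M}(V,\Z_{\ell}(i))\to H^{2i}_{M}(X,\Z_{\ell}(i))$, assembled from the Gysin map of the smooth closed pair $(V,U)$ and the localization sequence of the open immersion $V\subseteq X$.

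Everything therefore reduces to a single compatibility: on smooth $k$-schemes the comparison maps $H^{\ast}_{M}(-,\Z_{\ell}(\ast))\to H^{\ast}_{L}(-,\Z_{\ell}(\ast))\cong H^{\ast}_{\BMM}((-)_{\et},\Z_{\ell}(\ast))$ respect localization sequences of open immersions and Gysin maps of smooth closed pairs, and they send $[U]_{M}$ to $[U]$. The last assertion is exactly the functoriality in the coefficient ring in Lemma \ref{lemma:cohomology-of-point} \ref{P7} (applied to $\Z\to\Z_{\ell}$ and to the change of topology), while the first holds because all the localization triangles and Gysin maps in question are produced by the Grothendieck six-functor formalism: the Gysin map of a smooth closed pair $(V,U)$ of codimension $c$ is induced by the relative purity isomorphism $\iota^{!}\mathbbm{1}_{V}\cong\mathbbm{1}_{U}(-c)[-2c]$ together with the counit $\iota_{!}\iota^{!}\to\mathrm{id}$ and the identity $\iota_{!}=\iota_{\ast}$, the comparison \eqref{etale-vs-lichtenbaum} is induced by the purity isomorphism $\pi_{X}^{\ast}(d_{X})[2d_{X}]\cong\pi_{X}^{!}$, and the change-of-topology functor $\DM(-,\Z_{\ell})\to\DM_{\h}(-,\Z_{\ell})$ is a morphism of premotivic categories commuting with $f^{\ast}$, $f_{!}$ and these purity isomorphisms, cf. \cite[Thm.~7.1.2 and \S6.3]{cd}. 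I expect this coherence check --- transporting the Gysin maps and purity isomorphisms through the various motivic categories and topologies --- to be the only genuine obstacle; once it is in place, the two composites agree on $[U]$, so $\alpha^{i}_{X}([x])=\tilde{\alpha}^{i}_{X}([x])$, and the corollary follows.
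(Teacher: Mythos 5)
Your argument is correct and essentially the paper's own proof: both reduce to a perfect base field, pass to the open complement of the (codimension $\geq i+1$) non-smooth locus of the support, and identify both maps with the pushforward of the fundamental class of the now-smooth support, the key input being that the motivic-to-\'etale comparison commutes with Gysin pushforwards for smooth closed pairs and with restriction to opens whose complement has codimension $>i$. The paper packages this as the commutativity of one diagram involving $\alpha^{0}$ of $\supp(z)^{\sm}$ for an arbitrary cycle $z$, rather than generator-by-generator with the six-functor justification you spell out, but the substance is the same.
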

\begin{proof} Let $X$ be a smooth and equi-dimensional algebraic scheme over $k$. Lemma \ref{lemma:topological-invariance}, allows one to replace $k$ with its perfect closure. By \eqref{etale-vs-lichtenbaum}, we have \[H^{2i}_{\BMM}(X_{\et},\Z_\ell(i))\cong H^{2i}_{L}(X,\Z_\ell(i)).\] Pick a cycle $z\in\CH^{i}(X)_{\Z_{\ell}}$ and let $Z:=\supp(z)\subset X$. Let $S\subset Z$ denote the singular locus of $Z$. Since $\codim_{X}(S)\geq i+1$, we find from the relative localization sequences the following isomorphisms \[\CH^{i}(X)\cong\CH^{i}(X\setminus S)\ \text{and}\ H^{2i}_{L}(X,\Z_\ell(i))\cong H^{2i}_{L}(X\setminus S,\Z_\ell(i)).\] We consider the following diagram: \begin{center}
    \begin{tikzcd}
{\CH^{0}(Z^{\sm})_{\Z_{\ell}}=\bigoplus_{x\in Z^{(0)}}\Z_{\ell}[x]} \arrow[d, "\alpha^{i}_{Z^{\sm}}", no head, equal] \arrow[r, "\iota_{\ast}"] & \CH^{i}(X\setminus S) \arrow[d, "\alpha^{i}_{X\setminus S}"] \arrow[r, "\cong"] & \CH^{i}(X) \arrow[d, "\alpha^{i}_{X}"] \\
{H^{0}_{L}(Z^{\sm},\Z_{\ell}(0))} \arrow[r, "\iota_{\ast}"]                                                                                       & {H^{2i}_{L}(X\setminus S,\Z_{\ell}(i))} \arrow[r, "\cong"]                      & {H^{2i}_{L}(X,\Z_{\ell}(i)).}          
\end{tikzcd}
\end{center}The commutativity of the diagram above shows that $\alpha^{i}_{X}$ is also induced from the pushforward \eqref{eq:pushforward}. In particular, we have the equality $\alpha^{i}_{X}=\tilde{\alpha}^{i}_{X}$, as claimed.\end{proof}

\begin{corollary}\label{cor:rational-equivalence} Let $X$ be an algebraic $k$-scheme and let $\ell$ be a prime invertible in $k$. Then the cycle class map $\tilde{\alpha}^{1}_{X}$ from \eqref{def:kahn-map} is an isomorphism. In particular, the $\ell$-adic twisted Borel-Moore cohomology theory $H^{i}_{\BMM}({-}_{\et},R(j))$ is adapted to rational equivalence (see \cite[Def. 4.5]{Sch-refined}).\end{corollary}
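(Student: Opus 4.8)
The plan is to compare the presentation \eqref{def:ell-adic-Chow-group} of $\CH^{1}(X)_{\Z_{\ell}}$ with the localisation sequence \eqref{def:Gysin-sequence} that computes $H^{2}_{\BMM}(X_{\et},\Z_{\ell}(1))$. By Corollary \ref{cor:vanishing-consequence} we have $H^{2}_{\BMM}(X_{\et},\Z_{\ell}(1))=H^{2}_{\BMM}({F_{1}X}_{\et},\Z_{\ell}(1))$, and by construction $\tilde{\alpha}^{1}_{X}$ is the map induced on $\CH^{1}(X)_{\Z_{\ell}}$ by the pushforward $\iota_{\ast}\colon\bigoplus_{x\in X^{(1)}}\Z_{\ell}[x]\to H^{2}_{\BMM}({F_{1}X}_{\et},\Z_{\ell}(1))$ of \eqref{eq:pushforward}. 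Hence it suffices to show that $\iota_{\ast}$ is surjective and that its kernel equals the image of the composite $\partial\circ\iota_{\ast}$ from \eqref{eq:comp}. I will use two instances of the localisation sequence \eqref{def:Gysin-sequence}: the portion for $s=1$, $j=1$ around degrees $1$ and $2$,
\[
H^{1}_{\BMM}({F_{0}X}_{\et},\Z_{\ell}(1))\stackrel{\partial}\longrightarrow\bigoplus_{x\in X^{(1)}}\Z_{\ell}[x]\stackrel{\iota_{\ast}}\longrightarrow H^{2}_{\BMM}({F_{1}X}_{\et},\Z_{\ell}(1))\longrightarrow H^{2}_{\BMM}({F_{0}X}_{\et},\Z_{\ell}(1)),
\]
and the case $s=0$ (where $F_{-1}X=\emptyset$, so that $H^{i}_{\BMM}({F_{-1}X}_{\et},\Z_{\ell}(1))=0$ for all $i$), which yields natural isomorphisms $\iota_{\ast}\colon\bigoplus_{x\in X^{(0)}}H^{i}(x,\Z_{\ell}(1))\xrightarrow{\sim}H^{i}_{\BMM}({F_{0}X}_{\et},\Z_{\ell}(1))$ for every $i$.

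The crux is the vanishing $H^{2}_{\BMM}({F_{0}X}_{\et},\Z_{\ell}(1))=0$; by the $s=0$ isomorphism it suffices to prove $H^{2}(x,\Z_{\ell}(1))=0$ for an arbitrary point $x$. For this I would use the Bockstein sequence attached to $\Z_{\ell}\to\Q_{\ell}\to\Q_{\ell}/\Z_{\ell}$ (as in the proof of Lemma \ref{lemma:cohomology-of-point} \ref{P6}), which gives an exact sequence
\[
H^{1}(x,\Q_{\ell}(1))\longrightarrow H^{1}(x,\Q_{\ell}/\Z_{\ell}(1))\longrightarrow H^{2}(x,\Z_{\ell}(1))\longrightarrow H^{2}(x,\Q_{\ell}(1)).
\]
Here $H^{2}(x,\Q_{\ell}(1))=0$ by Lemma \ref{lemma:cohomology-of-point} \ref{P6} (as $2>1$), while by Lemma \ref{lemma:cohomology-of-point} \ref{P5}, \eqref{def:Q-coeeficients} and \eqref{def:Q/Z-coefficients} the first two groups are $k(x)^{\ast}\otimes\Q_{\ell}$ and $\colim_{r}k(x)^{\ast}/\ell^{r}=k(x)^{\ast}\otimes\Q_{\ell}/\Z_{\ell}$; using the compatibility of the identifications in \ref{P5} with the reduction maps $\Z_{\ell}\to\Z/\ell^{r}$, one checks that the first arrow of the sequence is the canonical map $k(x)^{\ast}\otimes\Q_{\ell}\to k(x)^{\ast}\otimes\Q_{\ell}/\Z_{\ell}$, which is surjective. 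Therefore $H^{2}(x,\Z_{\ell}(1))=0$, hence $H^{2}_{\BMM}({F_{0}X}_{\et},\Z_{\ell}(1))=0$, and the displayed four-term sequence shows that $\iota_{\ast}\colon\bigoplus_{x\in X^{(1)}}\Z_{\ell}[x]\to H^{2}_{\BMM}(X_{\et},\Z_{\ell}(1))$ is surjective. In particular $\tilde{\alpha}^{1}_{X}$ is surjective.

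For injectivity, exactness of the same four-term sequence identifies the kernel of $\iota_{\ast}\colon\bigoplus_{x\in X^{(1)}}\Z_{\ell}[x]\to H^{2}_{\BMM}(X_{\et},\Z_{\ell}(1))$ with the image of $\partial\colon H^{1}_{\BMM}({F_{0}X}_{\et},\Z_{\ell}(1))\to\bigoplus_{x\in X^{(1)}}\Z_{\ell}[x]$. Since $H^{2i-1}_{\BMM}({F_{i-1}X}_{\et},\Z_{\ell}(i))=H^{1}_{\BMM}({F_{0}X}_{\et},\Z_{\ell}(1))$ for $i=1$ and $\iota_{\ast}\colon\bigoplus_{x\in X^{(0)}}H^{1}(x,\Z_{\ell}(1))\xrightarrow{\sim}H^{1}_{\BMM}({F_{0}X}_{\et},\Z_{\ell}(1))$ is an isomorphism (the $s=0$ case above), this image coincides with the image of the composite $\partial\circ\iota_{\ast}$ appearing in \eqref{eq:comp}. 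Comparing with the presentation \eqref{def:ell-adic-Chow-group} of $\CH^{1}(X)_{\Z_{\ell}}$, one concludes that $\tilde{\alpha}^{1}_{X}$ is injective, hence an isomorphism. Finally, the axioms of an $\ell$-adic twisted Borel-Moore cohomology theory were verified in Remark \ref{rem:Borel-Moore-axioms}, so this isomorphism is precisely the statement that $H^{i}_{\BMM}(-_{\et},R(j))$ is adapted to rational equivalence in the sense of \cite[Def. 4.5]{Sch-refined}.

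I expect the only delicate point to be the vanishing $H^{2}(x,\Z_{\ell}(1))=0$ --- concretely, checking that under the identifications of Lemma \ref{lemma:cohomology-of-point} \ref{P5} the map $H^{1}(x,\Q_{\ell}(1))\to H^{1}(x,\Q_{\ell}/\Z_{\ell}(1))$ becomes the surjective canonical map $k(x)^{\ast}\otimes\Q_{\ell}\to k(x)^{\ast}\otimes\Q_{\ell}/\Z_{\ell}$; this is a functoriality statement in the coefficients, not a substantial computation. Everything else reduces to a diagram chase in \eqref{def:Gysin-sequence} together with Corollary \ref{cor:vanishing-consequence} and the presentation \eqref{def:ell-adic-Chow-group}.
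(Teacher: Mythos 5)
Your argument is correct, and its skeleton is the same as the paper's: compare the presentation \eqref{def:ell-adic-Chow-group} with the localisation sequence \eqref{def:Gysin-sequence} (plus Corollary \ref{cor:vanishing-consequence}), identify $H^{i}_{\BMM}({F_{0}X}_{\et},\Z_{\ell}(j))$ with $\bigoplus_{\eta\in X^{(0)}}H^{i}(\eta,\Z_{\ell}(j))$ to get injectivity, and reduce surjectivity to the vanishing $H^{2}(\eta,\Z_{\ell}(1))=0$. Where you genuinely diverge is in proving that vanishing: the paper does it in one line from the description of the weight-one complex, $\Z_{\ell}(1)\simeq\mathbb{G}_m[-1]\otimes\Z_{\ell}$, giving $H^{2}(\eta,\Z_{\ell}(1))\cong H^{2}_{L}(k(\eta),\Z_{\ell}(1))\cong\CH^{1}(\Spec k(\eta))=0$ (essentially Hilbert 90), whereas you deduce it from the Bockstein sequence with divisible coefficients together with Lemma \ref{lemma:cohomology-of-point} \ref{P5} and \ref{P6} and the surjectivity of $k(x)^{\ast}\otimes\Q_{\ell}\to k(x)^{\ast}\otimes\Q_{\ell}/\Z_{\ell}$. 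Your route works, but it needs two routine supplements you partly flag: the four-term exact sequence $H^{1}(x,\Q_{\ell}(1))\to H^{1}(x,\Q_{\ell}/\Z_{\ell}(1))\to H^{2}(x,\Z_{\ell}(1))\to H^{2}(x,\Q_{\ell}(1))$ is not literally Proposition \ref{prop:twisted-borel-moore-axioms} \ref{P3} but its colimit over $r$ (as already used in the proof of \ref{P6}), and the identification of the first arrow with the canonical map requires the compatibility of the isomorphisms in \ref{P5} with change of coefficients, which follows from the functoriality in $R$ of the quasi-isomorphism $R(1)\simeq\mathbb{G}_m[-1]\otimes^{\mathbb{L}}R$. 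Also, your appeal to the ``$s=0$'' case of \eqref{def:Gysin-sequence} with $F_{-1}X=\emptyset$ is exactly the content of \cite[Lemma 5.6]{Sch-refined}, which the paper cites together with the second isomorphism of Lemma \ref{lemma:topological-invariance} to handle possibly non-reduced $X$; you should invoke that reducedness reduction as well. The paper's proof of the vanishing is shorter and avoids the coefficient bookkeeping; yours has the mild advantage of staying entirely within the axiomatics of Lemma \ref{lemma:cohomology-of-point} rather than re-entering the explicit weight-one computation.
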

\begin{proof} From the localisation sequence \eqref{def:Gysin-sequence}, we find the following exact sequence:\[H^{1}_{\BMM}({F_{0}X}_{\et},\Z_{\ell}(1))\overset{\partial}{\longrightarrow}\bigoplus_{x\in X^{(1)}}\Z_{\ell}[x]\overset{\iota_{\ast}}{\longrightarrow}H^{2}_{\BMM}(X_{\et},\Z_{\ell}(1))\longrightarrow H^{2}_{\BMM}({F_{0}X}_{\et},\Z_{\ell}(1)).\] The second isomorphism in Lemma \ref{lemma:topological-invariance} together with \cite[Lemma 5.6]{Sch-refined} yield $$H^{i}_{\BMM}({F_{0}X}_{\et},\Z_{\ell}(j))=\bigoplus_{\eta\in X^{(0)}} H^{i}(\eta,\Z_{\ell}(j)).$$ Hence, by \eqref{def:ell-adic-Chow-group}, the cokernel of $\partial$ in the above sequence is simply the $\ell$-adic Chow group $\CH^{1}(X)_{\Z_{\ell}}$. This shows that $\tilde{\alpha}^{1}_{X}$ is injective. On the other hand, $H^{2}(\eta,\Z_{\ell}(1))=H^{2}_{L}(k(\eta),\Z_{\ell}(1))\cong \CH^{1}(\Spec k(\eta))=0$ (simply because $\Z_{\ell}(1)$ is quasi-isomorphic to $\mathbb{G}_{m}[-1]\otimes\Z_{\ell}$), implies that $\tilde{\alpha}^{1}_{X}$ is also surjective.\end{proof}
 \subsection{Comparison to pro-\'etale Borel-Moore Cohomology}\label{sub:3} Let $X$ be an algebraic $k$-scheme and let $\ell$ be a prime invertible in $k$. For a ring $R\in\{\Z/\ell^{r},\Z_{\ell},\Q_{\ell}\}$, we consider the twisted Borel-Moore pro-\'etale cohomology $H^{i}_{\BM}(X_{\proet},R(j))$ of $X$, as stated in (\cite[(6.13)-(6.15)]{Sch-refined}, \cite[Prop. 6.6]{Sch-refined}), where $X_{\proet}$ stands for the pro-\'etale site of $X$ (see \cite{BS}). Some of the main properties of this functor are contained in \cite[Sec. 4]{Sch-refined}. We recall that if $X$ is smooth and equi-dimensional, then we have the following natural isomorphisms: \begin{align}\label{eq:BM-proet-smooth-case-1} H^{i}_{\BM}(X_{\proet},\Z/\ell^{r}(j))\cong H^{i}(X_{\et},\mu_{\ell^{r}}^{\otimes j})\ \text{and}\ H^{i}_{\BM}(X_{\proet},\Z_{\ell}(j))\cong H^{i}_{cont}(X_{\et},\Z_{\ell}(j)),
\end{align}
where $H^{i}_{cont}$ denotes Jannsen's continuous \'etale cohomology (see \cite{jannsen}).
\begin{lemma}\label{lemma:comparison maps} Let $k$ be a field and let $\ell$ be a prime invertible in $k$. For any algebraic $k$-scheme $X$ and any ring $R\in\{\Z/\ell^{r},\Z_{\ell},\Q_{\ell}\}$, there are canonical homomorphisms 
\begin{align}
    \label{def:comparison-map} \rho^{i,j}_{X}\colon H^{i}_{\BMM}(X_{\et},R(j))\longrightarrow H^{i}_{\BM}(X_{\proet},R(j))
\end{align}
that are compatible with \'etale maps $j\colon U\to X$ $(\text{where}\ \dim U=\dim X)$
and proper morphisms $f\colon X\to Y$. Moreover, if $R=\Z/\ell^{r}$, then $\rho^{i,j}_{X}$ is an isomorphism.\end{lemma}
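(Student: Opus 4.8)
The plan is to deduce the lemma from an $\ell$-adic realization functor linking the two six-functor formalisms underlying the two Borel--Moore theories, and then to transport the defining formula \eqref{def:Borel-Moore-Motivic} through it. First I would construct, for every algebraic $k$-scheme $X$, a symmetric monoidal triangulated functor $\Phi_X\colon\DM_{\h}(X,R)\to D(X_{\proet},R)$, natural in $X$: for $R=\Z/\ell^{r}$ one takes the equivalence of premotivic triangulated categories $\DM_{\h}(-,\Z/\ell^{r})\simeq D(-_{\et},\Z/\ell^{r})$ of \cite[Cor.~5.5.4]{cd} (already invoked in \eqref{etale=motivic}) followed by the change-of-topology functor $\nu^{\ast}\colon D(-_{\et},\Z/\ell^{r})\to D(-_{\proet},\Z/\ell^{r})$; for $R=\Z_{\ell}$ and $R=\Q_{\ell}$ one obtains $\Phi_X$ by passing to the $\ell$-complete, resp.\ rational, limit over $r$, the target being the pro-\'etale $\ell$-adic category underlying the functor $H^{i}_{\BM}(-_{\proet},R(j))$ of \cite[Sec.~6.1]{Sch-refined}. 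The structural properties I need from $\Phi$ are: it sends $\mathbbm{1}_X$ to the constant sheaf $R_X$ and the motivic Tate twist $\mathbbm{1}_X(1)$ to the $\ell$-adic Tate twist; and it is compatible with $f^{\ast}$, with $f_{!}=f_{\ast}$ for proper $f$, and with $j^{!}=j^{\ast}$ for \'etale $j$ of relative dimension $0$. From the compatibility with $f_{!}$ the mate (exchange) construction yields canonical natural transformations $\Phi_X\circ f^{!}\to f^{!}\circ\Phi_Y$ for $f\colon X\to Y$.

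Granting this, $\rho^{i,j}_X$ is obtained by applying $\Phi_X$ to the group \eqref{def:Borel-Moore-Motivic}, identifying $\Phi_X(\mathbbm{1}_X(d_X-j)[2d_X-i])$ with $R_X(d_X-j)[2d_X-i]$, and post-composing with the exchange transformation $\Phi_X\pi^{!}_X\mathbbm{1}_{\Spec k}\to\pi^{!}_X\Phi_{\Spec k}\mathbbm{1}_{\Spec k}=\pi^{!}_X R_{\Spec k}$; this lands in $\Hom_{D(X_{\proet},R)}(R_X(d_X-j)[2d_X-i],\pi^{!}_X R_{\Spec k})=H^{i}_{\BM}(X_{\proet},R(j))$. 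The asserted compatibilities are then a formal diagram chase: on both sides the pullback along an \'etale $j\colon U\to X$ with $\dim U=\dim X$ is induced by $j^{\ast}=j^{!}$ together with $j^{\ast}\pi^{!}_X=\pi^{!}_U$, while the proper pushforward of Prop.~\ref{prop:twisted-borel-moore-axioms}\ref{P1} is induced by the canonical unit $\mathbbm{1}_Y\to f_{\ast}\mathbbm{1}_X$ and the adjunction $f_{!}\dashv f^{!}$; since $\Phi$ intertwines $f^{\ast}$, $f_{\ast}=f_{!}$ and the relevant unit/counit $2$-cells, $\rho^{i,j}$ commutes with both. For the final assertion, when $R=\Z/\ell^{r}$ both sides are canonically identified with $\ell$-adic Borel--Moore \'etale cohomology $H^{i}_{\BM}(X_{\et},\Z/\ell^{r}(j))$ --- the left-hand side by \eqref{etale=motivic}, the right-hand side because pro-\'etale and \'etale cohomology agree with $\Z/\ell^{r}$-coefficients (\cite{BS}) --- and one checks that $\rho^{i,j}_X$ is exactly this identification, hence an isomorphism.

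The step I expect to be the main obstacle is the construction of $\Phi$ for $R\in\{\Z_{\ell},\Q_{\ell}\}$ compatibly with the \emph{full} six-functor formalism --- in particular with the exceptional operations $f^{!}$ and $f_{!}$ that enter the Borel--Moore formula --- since at integral and rational coefficients $\Phi$ is no longer an equivalence and one cannot simply transport structure along it. For torsion coefficients this is automatic, because $\DM_{\h}(-,\Z/\ell^{r})\simeq D(-_{\et},\Z/\ell^{r})$ is an equivalence of premotivic triangulated categories and hence respects all six functors by construction; in the limit over $r$ one has to invoke the compatibility of the six functors with the relevant homotopy limits (this is essentially the content of \cite[\S 7]{cd}) and to match the resulting $\ell$-adic realization with the pro-\'etale formalism of \cite[Sec.~6.1]{Sch-refined}, for which the smooth-case identifications \eqref{eq:BM-proet-smooth-case-1} serve as a sanity check.
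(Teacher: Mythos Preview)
Your approach is correct and in the same spirit as the paper's: both construct $\rho^{i,j}_X$ by pushing the defining formula \eqref{def:Borel-Moore-Motivic} through an $\ell$-adic realization functor compatible with the six operations, and both handle the torsion case via the equivalence \eqref{etale=motivic}. The difference is that the paper sidesteps precisely the obstacle you flag. Rather than building $\Phi_X\colon\DM_{\h}(X,\Z_\ell)\to D(X_{\proet},\Z_\ell)$ directly by taking an $\ell$-complete limit and then verifying six-functor compatibility by hand, the paper factors through Ekedahl's category of $\ell$-adic systems: it invokes the premotivic adjunction $\hat{r}^{\ast}_{\ell}\colon\DM_{\h}(-,\Z_\ell)\rightleftarrows \D_{Ek}(-_{\et},\Z_\ell)$ of \cite[Thm.~7.2.11, Prop.~7.2.21]{cd}, which is already established there to be compatible with the six operations, and then uses the comparison $\D_{Ek}(X_{\et},\Z_\ell)\simeq D(X_{\proet},\Z_\ell)$ of \cite[Prop.~5.5.4]{BS} to land in the pro-\'etale target. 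The $\Q_\ell$-case is then obtained by tensoring, as you also suggest. So your plan would work, but the limit construction and the verification of compatibility with $f^{!}$ that you anticipate as the hard step are exactly what the intermediate passage through $\D_{Ek}$ buys for free.
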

 \begin{proof} If $R=\Z/\ell^{r}$, then $H^{i}_{\BM}(X_{\proet},\Z/\ell^{r}(j))$ is Borel-Moore \'etale cohomology (see \cite{BS}) and the result thus follows from \eqref{etale=motivic}. Clearly it suffices to restrict to $R=\Z_{\ell}$, as the remaining case i.e. $R=\Q_{\ell}$, then follows from tensoring $\rho^{i,j}_{X}$ with $\Q_{\ell}$. By \cite[Thm. 7.2.11]{cd} and \cite[Prop. 7.2.21]{cd}, there is a premotivic adjunction \begin{center}
     \begin{tikzcd}
{\hat{r}^{\ast}_{\ell}:\DM_{\h}(X,\Z_{\ell})} \arrow[r, shift right] & {\D_{Ek}(X_{\et},\Z_{\ell}):\hat{r}_{\ell\ast}}, \arrow[l, shift right]
\end{tikzcd} 
 \end{center}
 where $\D_{Ek}(X_{\et},\Z_{\ell})$ is Ekedahl's triangulated monoidal category of $\ell$-adic systems (\cite[Def. 2.5]{Eke}) and where the premotivic morphism $\hat{r}^{\ast}_{\ell}$ is compatible with the six operations. Hence, we obtain a natural homomorphism $H^{i}_{\BMM}(X_{\et},\Z_{\ell}(j))\to H^{i}_{\BM,Ek}(X_{\et},\Z_{\ell}(j))$, where the target group is defined analogously to \eqref{def:Borel-Moore-Motivic}. To conclude, we note that there is a canonical isomorphism \[H^{i}_{\BM,Ek}(X_{\et},\Z_{\ell}(j))\cong H^{i}_{\BM}(X_{\proet},\Z_{\ell}(j))\] by the comparison result \cite[Prop. 5.5.4]{BS}.\end{proof}
 We list some of the properties of the natural maps from \eqref{def:comparison-map}. \begin{lemma}\label{lemma:prop-rho} Let $X$ be an algebraic $k$-scheme and let $\ell$ be a prime invertible in $k$. The canonical homomorphisms $\rho^{i,j}_{X}$ from \eqref{def:comparison-map} satisfy the following:\begin{enumerate}[label=$(\roman*)$]
     \item \label{rho-1} For all $i,j\in \Z$, the natural map $$\rho_{X,\tors}^{i,j}:H^{i}_{\BMM}(X_{\et},\Z_{\ell}(j))_{\tors}\longrightarrow H^{i}_{\BM}(X_{\proet},\Z_{\ell}(j))_{\tors}$$ is surjective. 
     \item\label{rho-2}Furthermore, if the field $k$ is finitely generated, then $\rho^{2,1}_{X,\tors}$ is an isomorphism.
     \item\label{rho-3} For all $i,j\in \Z$, $\rho^{i,j}_{X}$ has torsion free cokernel and divisible kernel.
 \end{enumerate} 
\end{lemma}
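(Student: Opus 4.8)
The plan is to deduce all three assertions from a comparison of the Bockstein long exact sequences on the two sides. The point is that the maps $\rho^{i,j}_{X}$ are compatible with the Bocksteins: the motivic triangle \eqref{eq:Bockstein-sequence} is $\pi^{!}_{X}$ applied to the short exact sequence $0\to\Z_{\ell}\xrightarrow{\ell^{r}}\Z_{\ell}\to\Z/\ell^{r}\to0$ of $h$-sheaves, and the comparison functor $\hat r^{*}_{\ell}$ from the proof of Lemma \ref{lemma:comparison maps}, being monoidal and compatible with the six operations, carries it to the analogous distinguished triangle of $\ell$-adic systems; so for each $r$ one obtains a commutative ladder between the motivic Bockstein sequence of Proposition \ref{prop:twisted-borel-moore-axioms} \ref{P3} and the corresponding pro-\'etale Borel-Moore one. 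In particular, for all $i,j\in\Z$ and $r\geq1$ there is a morphism from the short exact sequence
\[
0\to H^{i}_{\BMM}(X_{\et},\Z_{\ell}(j))/\ell^{r}\to H^{i}_{\BMM}(X_{\et},\Z/\ell^{r}(j))\to H^{i+1}_{\BMM}(X_{\et},\Z_{\ell}(j))[\ell^{r}]\to0
\]
to the one with $\BMM$ and $X_{\et}$ replaced by $\BM$ and $X_{\proet}$, whose middle vertical arrow is an isomorphism because $\rho^{i,j}_{X}$ is an isomorphism with $\Z/\ell^{r}$-coefficients (Lemma \ref{lemma:comparison maps}). By the snake lemma, for all $m,j\in\Z$ and $r\geq1$: the natural map $H^{m}_{\BMM}(X_{\et},\Z_{\ell}(j))/\ell^{r}\to H^{m}_{\BM}(X_{\proet},\Z_{\ell}(j))/\ell^{r}$ is injective, the natural map $H^{m}_{\BMM}(X_{\et},\Z_{\ell}(j))[\ell^{r}]\to H^{m}_{\BM}(X_{\proet},\Z_{\ell}(j))[\ell^{r}]$ is surjective, and the cokernel of the former is isomorphic to the kernel of the latter taken in cohomological degree $m+1$. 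Part \ref{rho-1} follows at once: a torsion element of a $\Z_{\ell}$-module is $\ell$-power torsion, so $H^{i}_{\BMM}(X_{\et},\Z_{\ell}(j))_{\tors}=\colim_{r}H^{i}_{\BMM}(X_{\et},\Z_{\ell}(j))[\ell^{r}]$ (transition maps the inclusions), and likewise for $\BM$; taking the filtered colimit over $r$ of the surjections above yields the surjectivity of $\rho^{i,j}_{X,\tors}$.

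For part \ref{rho-3}, write $\rho=\rho^{i,j}_{X}\colon A\to B$ and factor it as $A\to\im\rho\hookrightarrow B$ with the first arrow surjective. Then $A/\ell^{r}\to(\im\rho)/\ell^{r}$ is surjective and the composite $A/\ell^{r}\to B/\ell^{r}$ is the injective map from the first paragraph, so $(\im\rho)/\ell^{r}\to B/\ell^{r}$ is injective; applying $-\otimes^{\mathbb{L}}\Z/\ell^{r}$ to $0\to\im\rho\to B\to\coker\rho\to0$ gives $\operatorname{Tor}_{1}(\coker\rho,\Z/\ell^{r})=(\coker\rho)[\ell^{r}]=0$ for every $r$, so $\coker\rho$ is torsion-free. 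For the kernel, let $x\in\ker\rho$; injectivity of $A/\ell\to B/\ell$ forces $x=\ell x'$ for some $x'\in A$, so $\rho(x')\in B[\ell]$, and surjectivity of $A[\ell]\to B[\ell]$ gives $x''\in A[\ell]$ with $\rho(x'')=\rho(x')$, whence $x=\ell(x'-x'')$ with $x'-x''\in\ker\rho$. Thus $\ker\rho$ is $\ell$-divisible, hence divisible.

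For part \ref{rho-2}, the colimit identification in part \ref{rho-1} gives $\ker(\rho^{2,1}_{X,\tors})\cong\colim_{r}\coker(\bar\rho^{(r)})$, where $\bar\rho^{(r)}\colon H^{1}_{\BMM}(X_{\et},\Z_{\ell}(1))/\ell^{r}\to H^{1}_{\BM}(X_{\proet},\Z_{\ell}(1))/\ell^{r}$; since $\coker(\bar\rho^{(r)})=\coker(\rho^{1,1}_{X})/\ell^{r}$, the transition maps in this colimit are multiplication by $\ell$, and $\coker(\rho^{1,1}_{X})$ is torsion-free by part \ref{rho-3}, the colimit vanishes precisely when $\coker(\rho^{1,1}_{X})$ is $\ell$-divisible. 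So \ref{rho-2} amounts to showing that $\coker(\rho^{1,1}_{X})$ is divisible when $k$ is finitely generated. To establish this I would feed the localisation sequence \eqref{def:Gysin-sequence} with $s=0$, together with Corollary \ref{cor:vanishing-consequence}, to identify $H^{1}_{\BMM}(X_{\et},\Z_{\ell}(1))$ and $H^{1}_{\BM}(X_{\proet},\Z_{\ell}(1))$ with kernels of residue maps on $\bigoplus_{\eta\in X^{(0)}}k(\eta)^{*}\otimes\Z_{\ell}$ and on $\bigoplus_{\eta}H^{1}_{cont}(k(\eta),\Z_{\ell}(1))$ respectively, and $\rho^{1,1}_{X}$ with the restriction of $\bigoplus_{\eta}\bigl(k(\eta)^{*}\otimes\Z_{\ell}\to H^{1}_{cont}(k(\eta),\Z_{\ell}(1))\bigr)$; a diagram chase then reduces the divisibility of $\coker(\rho^{1,1}_{X})$ to two facts. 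First, for any field $K$ the cokernel of $K^{*}\otimes\Z_{\ell}\to H^{1}_{cont}(K,\Z_{\ell}(1))\cong\varprojlim_{r}K^{*}/\ell^{r}$ (Kummer theory, using $\varprojlim^{1}\mu_{\ell^{r}}(K)=0$) is divisible, which follows from a Mittag-Leffler argument resting on the finiteness of $\mu_{\ell}(K)$. Second, the residual Tate-module term $T_{\ell}\Pic(X)$ vanishes, i.e. $\Pic(X)$ has finite $\ell$-primary torsion, which is exactly where finite generation of $k$ is used (standard finiteness of the torsion of $\NS$ and of the Picard variety over a finitely generated field, with the general case reducing to the smooth projective one). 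This part, and within it the bookkeeping of the $F_{0}$-localisation reduction with its attendant $\varprojlim/\varprojlim^{1}$-arguments, is the main obstacle; parts \ref{rho-1} and \ref{rho-3} are purely formal consequences of the Bockstein comparison of the first paragraph.
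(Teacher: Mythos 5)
Your parts \ref{rho-1} and \ref{rho-3} are essentially sound, though they take a slightly different route from the paper: for \ref{rho-1} the paper uses the $\Q_{\ell}/\Z_{\ell}$-coefficient Bockstein square directly, while you work at finite level and pass to the colimit -- same content, and the compatibility of $\rho$ with the Bocksteins via $\hat r^{*}_{\ell}$ is the same input the paper uses implicitly. For \ref{rho-3} the paper argues structurally (Kahn's argument: the cone of the unit $\Z_{\ell}(0)\to\hat r_{\ell\ast}\hat r^{*}_{\ell}\Z_{\ell}(0)$ is uniquely $\ell$-divisible, so the third terms of the long exact sequence are $\Q_{\ell}$-vector spaces), whereas you deduce it formally from the mod-$\ell^{r}$ isomorphism; your kernel argument is correct, but the cokernel step is wrong as written: injectivity of $(\im\rho)/\ell^{r}\to B/\ell^{r}$ only shows, via the Tor sequence, that $(\coker\rho)[\ell^{r}]$ is a quotient of $B[\ell^{r}]$, not that it vanishes (take $\im\rho=\Z_{\ell}\subset B=\Z_{\ell}\oplus\Z/\ell$). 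The repair is immediate from your own snake-lemma output: surjectivity of $A[\ell^{r}]\to B[\ell^{r}]$ gives $B[\ell^{r}]\subseteq\im\rho$, so its image in $\coker\rho$ is zero and hence $(\coker\rho)[\ell^{r}]=0$.

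The genuine gap is in \ref{rho-2}, which is exactly where finite generation of $k$ must enter. Your reduction of \ref{rho-2} to the $\ell$-divisibility of $\coker(\rho^{1,1}_{X})$ is correct and clean, but the proof of that divisibility is only sketched, and the sketch misses the two decisive points. First, identifying $H^{1}$ of a generic point for the pro-\'etale Borel--Moore theory (a colimit over opens of inverse limits) with $H^{1}_{cont}(k(\eta),\Z_{\ell}(1))$ requires a limit--colimit interchange that fails in general (the groups $H^{1}(U,\mu_{\ell^{r}})$ are infinite already for $U=\Spec\Q$); this is precisely the kind of input the paper imports from \cite{Sch-refined}. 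Second, divisibility does not pass to subgroups, so the ``diagram chase'' must be made explicit: since the two residue maps agree on $\bigoplus_{x\in X^{(1)}}\Z_{\ell}[x]$, one gets a short exact sequence with $\coker(\rho^{1,1}_{X})$ on the left, the direct sum of the field-level cokernels in the middle, and the quotient of the two images of $\partial$ on the right; that quotient is $\ker\left(\CH^{1}(X)_{\Z_{\ell}}\to A^{1}(X)_{\Z_{\ell}}\right)$, and it is the $\ell$-torsion of this group (not $T_{\ell}\Pic(X)$ of a smooth projective model -- $X$ here is an arbitrary, possibly singular and non-proper scheme and the relevant group is $\CH^{1}=\CH_{d-1}$) that obstructs the divisibility. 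Controlling it over finitely generated fields is exactly the content of the results the paper cites, namely \cite[Lemma 7.5, Prop. 6.6]{Sch-refined} giving $A^{1}(X)_{\Z_{\ell}}=\CH^{1}(X)_{\Z_{\ell}}$, together with the torsion-freeness of $H^{2}_{\BM}({F_{0}X}_{\proet},\Z_{\ell}(1))$; the paper's own proof of \ref{rho-2} then runs through $\rho^{2,1}_{X}\circ\tilde{\alpha}^{1}_{X}=\cl^{1}_{X}$ with $\tilde{\alpha}^{1}_{X}$ an isomorphism (Corollary \ref{cor:rational-equivalence}). So the core of \ref{rho-2} is the part you yourself flag as ``the main obstacle,'' and the surrogate statements you offer ($T_{\ell}\Pic(X)=0$ plus an unargued reduction to the smooth projective case) neither identify the correct obstruction group nor come with a proof.
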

\begin{proof} For \ref{rho-1}, we consider the following commutative square
\begin{center}
    \begin{tikzcd}
{H^{i-1}_{\BMM}(X_{\et},\Q_{\ell}/\Z_{\ell}(j))} \arrow[r, "\delta", two heads] \arrow[d, "{\rho^{i-1,j}_{X}}", equals] & {H^{i}_{\BMM}(X_{\et},\Z_{\ell}(j))_{\tors}} \arrow[d, "{\rho^{i,j}_{X,\tors}}"] \\
{H^{i-1}_{\BM}(X_{\proet},\Q_{\ell}/\Z_{\ell}(j))} \arrow[r, "\delta", two heads]                                    & {H^{i}_{\BM}(X_{\proet},\Z_{\ell}(j))_{\tors}.}    \end{tikzcd}\end{center} The horizontal arrows are simply the Bockstein maps (see Prop. \ref{prop:twisted-borel-moore-axioms} \ref{P3}) and thus give surjections onto the torsion subgroups of their codomains. The map $\rho^{i-1,j}_{X}$ in the above diagram is the identity by Lemma \ref{lemma:comparison maps}. This proves \ref{rho-1}.\par Concerning \ref{rho-2}, we note that the compatibility of $\rho^{i,j}_{X}$ with proper pushforward yields $\rho^{2,1}_{X}\circ\tilde{\alpha}^{1}_{X}=\cl^{1}_{X}$, where $\tilde{\alpha}^{1}_{X}$ and $\cl^{1}_{X}$ are the cycle class maps from \eqref{def:kahn-map} and \cite[(7.1)]{Sch-refined}, respectively. Corollary \ref{cor:rational-equivalence} gives that $\tilde{\alpha}^{1}_{X}$ is an isomorphism. On the other hand, if the field $k$ is finitely generated, then $\cl^{1}_{X}$ is injective and it induces an isomorphism on torsion subgroups. This follows from the same argument used in the proof of Corollary \ref{cor:rational-equivalence}. Namely, we look at the exact sequence \[H^{1}_{\BM}({F_{0}X}_{\proet},\Z_{\ell}(1))\overset{\partial}{\longrightarrow}\bigoplus_{x\in X^{(1)}}\Z_{\ell}[x]\overset{\iota_{\ast}}{\longrightarrow}H^{2}_{\BM}(X_{\proet},\Z_{\ell}(1))\longrightarrow H^{2}_{\BM}({F_{0}X}_{\proet},\Z_{\ell}(1)).\] By \cite[Def. 7.2 and Lemma 7.4]{Sch-refined}, the cokernel of the map $\partial$ is $A^{1}(X)_{\Z_{\ell}}=(\CH^{1}(X)/\sim_{\alg})\otimes \Z_{\ell}$, i.e. the $\ell$-adic Chow group modulo algebraic equivalence. If the field $k$ is finitely generated, we find from \cite[Lemma 7.5 and Prop. 6.6]{Sch-refined} that $$A^{1}(X)_{\Z_{\ell}}=\CH^{1}(X)_{\Z_{\ell}}.$$ In particular, $\cl^{1}_{X}$ is injective. Since $H^{2}_{\BM}({F_{0}X}_{\proet},\Z_{\ell}(1))_{\tors}=0$ (see \cite[Lemma 5.13]{Sch-refined}), we deduce that $\cl^{1}_{X,\tors}$ is an isomorphism, as claimed. The proof of \ref{rho-2} is complete.\par The result of \ref{rho-3} is well-known if $X$ is smooth and equi-dimensional due to Kahn \cite[Cor. 3.5.]{kahn} (cf. \cite{RS16}). Same reasoning gives the general case: We consider once again Ekedahl's triangulated monoidal category of $\ell$-adic systems $\D_{Ek}(X_{\et},\Z_{\ell})$ (see \cite[Def. 2.5]{Eke}). Recall from the proof of Lemma \ref{lemma:comparison maps}, the premotivic adjunction \begin{center}
     \begin{tikzcd}
{\hat{r}^{\ast}_{\ell}:\DM_{\h}(-,\Z_{\ell})} \arrow[r, shift right] & {\D_{Ek}(-_{\et},\Z_{\ell}):\hat{r}_{\ell\ast}}, \arrow[l, shift right]
\end{tikzcd} 
 \end{center} where $\hat{r}^{\ast}_{\ell}$ is compatible with the six operations and consider the natural map $\epsilon_{\ell}:\Z_{\ell}(0)\longrightarrow\hat{r}_{\ell\ast}\hat{r}^{\ast}_{\ell}\Z_{\ell}(0)$ in $\DM_{\h}(k,\Z_{\ell})$, i.e. the unit of the relevant adjunction. Note that the cone $K$ of $\epsilon_{\ell}$ in $\DM_{\h}(k,\Z_{\ell})$ is uniquely $\ell$-divisible (cf. \cite[pp. 92 (7.2.4.c)]{cd}), i.e. lies in the essential image of the inclusion functor $$\DM_{\h}(k,\Q_{\ell})\longrightarrow\DM_{\h}(k,\Z_{\ell}).$$ We set $\hat{\Z}_{\ell}(0):=\hat{r}^{\ast}_{\ell}\Z_{\ell}(0)$. Since $\hat{r}_{\ell\ast}$ commutes with the right adjoint functors of the $6$ functors formalism, we obtain a distinguished triangle \begin{align}\label{eq:dist-trinagle}
     \pi^{!}_{X}\Z_{\ell}(0)\longrightarrow \hat{r}_{\ell\ast}\pi^{!}_{X}\hat{\Z}_{\ell}(0)\longrightarrow \pi^{!}_{X}K\longrightarrow  \pi^{!}_{X}\Z_{\ell}(0)[1].
 \end{align}
 Applying $\Hom_{\DM_{h}(X,\Z_{\ell})}(\Z_{\ell}(d_{X}-j)[2d_{X}-i],-)$ to \eqref{eq:dist-trinagle}, we find from the long exact sequence by using that $K$ is uniquely $\ell$-divisible that $\rho^{i,j}_{X}$ has $\ell$-divisible kernel and torsion free cokernel. This finishes the proof.\end{proof}
 \section{Reformulating the \'etale motivic cycle class map} In this section, we give another construction for the \'etale motivic cycle class map on torsion cycles \begin{align}\label{eq:alpha_{X}} \alpha^{i}_{X,\tors}:\CH^{i}(X)[\ell^{\infty}]\longrightarrow H^{2i}_{L}(X,\Z_{\ell}(i))_{\tors}.\end{align} This allows us to relate \eqref{eq:alpha_{X}} with the map $\lambda^{i}_{X}$ from \cite{alexandrou-schreieder}. The idea is to refine $\alpha^{i}_{X,\tors}$ via a cycle class map $\beta^{i}_{X}$ in exactly the same way that the authors in \cite{alexandrou-schreieder} factored Jannsen's map $\cl^{i}_{X}$ on torsion cycles through $\lambda^{i}_{X}$. \par Recall from \cite[Sec. 5 (5.1)]{Sch-refined} the coniveau filtration $N^{\ast}$ on $H^{i}_{\BMM}(X_{\et},R(j))$ defined by \begin{align}\label{eq:coniveau-filtration}N^{s}H^{i}_{\BMM}(X_{\et},R(j)):=\ker(H^{i}_{\BMM}(X_{\et},R(j))\longrightarrow H^{i}_{\BMM}({F_{s-1}X}_{\et},R(j))).\end{align} 
    The following Lemma is taken from \cite{alexandrou-schreieder}.
    \begin{lemma}\label{lemma:coniveau} Let $X$ be an algebraic $k$-scheme and let $\ell$ be a prime invertible in $k$. Fix a ring $R\in\{\Z/\ell^{r},\Z_{\ell},\Q_{\ell}\}$. The natural restriction map $H^{2i-1}_{\BMM}(X_{\et},R(i))\to H^{2i-1}_{\BMM}({F_{i-1}X}_{\et},R(i))$ is injective. Moreover, $N^{i-1}H^{2i-1}_{\BMM}(X_{\et},R(i))$ as a subgroup of $H^{2i-1}_{\BMM}({F_{i-1}X}_{\et},R(i))$ coincides with \begin{align}\label{eq:skata}\iota_\ast \left( \ker\left(\del\circ \iota_\ast:\bigoplus_{x\in X^{(i-1)}}H^1(x,R(1)) \longrightarrow  \bigoplus_{x\in X^{(i)}}R[x]\right) \right)\subset H^{2i-1}_{\BMM}({F_{i-1}X}_{\et},R(i)).\end{align}\end{lemma}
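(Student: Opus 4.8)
The plan is to read off the whole statement from the localisation sequence \eqref{def:Gysin-sequence}, the vanishing in Lemma \ref{lemma:cohomology-of-point}, and the stabilisation in Corollary \ref{cor:vanishing-consequence}; it is a diagram chase. First I would treat the injectivity claim. By Corollary \ref{cor:vanishing-consequence} in cohomological degree $2i-1$ and twist $i$ (so that $\max\{\lceil(2i-1)/2\rceil,i-1\}=i$), the restriction map $H^{2i-1}_{\BMM}(X_{\et},R(i))\to H^{2i-1}_{\BMM}({F_{i}X}_{\et},R(i))$ is an isomorphism, so it is enough to show that $H^{2i-1}_{\BMM}({F_{i}X}_{\et},R(i))\to H^{2i-1}_{\BMM}({F_{i-1}X}_{\et},R(i))$ is injective. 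In \eqref{def:Gysin-sequence} with $s=i$ and twist $i$, the term immediately preceding $H^{2i-1}_{\BMM}({F_iX}_{\et},R(i))$ is $\bigoplus_{x\in X^{(i)}}H^{-1}(x,R(0))$, which vanishes by Lemma \ref{lemma:cohomology-of-point} \ref{P6} (the exponent $-1$ is $<0$, the twist $0$ is $\le 0$). Exactness then gives injectivity and, simultaneously, identifies the image of $H^{2i-1}_{\BMM}({F_iX}_{\et},R(i))$ — equivalently, of $H^{2i-1}_{\BMM}(X_{\et},R(i))$ — inside $H^{2i-1}_{\BMM}({F_{i-1}X}_{\et},R(i))$ with $\ker\bigl(\partial\colon H^{2i-1}_{\BMM}({F_{i-1}X}_{\et},R(i))\to\bigoplus_{x\in X^{(i)}}H^{0}(x,R(0))\bigr)$, where $H^{0}(x,R(0))=R[x]$ by Lemma \ref{lemma:cohomology-of-point} \ref{P7} and $\partial$ is the map occurring in \eqref{eq:comp}.

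Next, for the identification of $N^{i-1}$, I would use the injection just obtained to regard $N^{i-1}H^{2i-1}_{\BMM}(X_{\et},R(i))$ as a subgroup of $H^{2i-1}_{\BMM}({F_{i-1}X}_{\et},R(i))$. Since the composite $H^{2i-1}_{\BMM}(X_{\et},R(i))\hookrightarrow H^{2i-1}_{\BMM}({F_{i-1}X}_{\et},R(i))\to H^{2i-1}_{\BMM}({F_{i-2}X}_{\et},R(i))$ is precisely the restriction map defining the coniveau filtration \eqref{eq:coniveau-filtration}, this group equals
\[
\im\bigl(H^{2i-1}_{\BMM}(X_{\et},R(i))\bigr)\cap\ker\bigl(H^{2i-1}_{\BMM}({F_{i-1}X}_{\et},R(i))\to H^{2i-1}_{\BMM}({F_{i-2}X}_{\et},R(i))\bigr)
\]
inside $H^{2i-1}_{\BMM}({F_{i-1}X}_{\et},R(i))$. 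By the first step the first factor is $\ker\partial$. For the second factor I would invoke \eqref{def:Gysin-sequence} with $s=i-1$ and twist $i$, whose relevant stretch reads
\[
\bigoplus_{x\in X^{(i-1)}}H^{1}(x,R(1))\overset{\iota_{\ast}}{\longrightarrow}H^{2i-1}_{\BMM}({F_{i-1}X}_{\et},R(i))\longrightarrow H^{2i-1}_{\BMM}({F_{i-2}X}_{\et},R(i)),
\]
so that by exactness the second factor equals $\im\iota_{\ast}$. Hence $N^{i-1}H^{2i-1}_{\BMM}(X_{\et},R(i))=\ker\partial\cap\im\iota_{\ast}=\iota_{\ast}\bigl(\iota_{\ast}^{-1}(\ker\partial)\bigr)=\iota_{\ast}\bigl(\ker(\partial\circ\iota_{\ast})\bigr)$, and since $\partial\circ\iota_{\ast}$ is the composition in \eqref{eq:comp} this is exactly the group \eqref{eq:skata}.

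I do not expect any serious obstacle: once the six-functor formalism recalled in Section \ref{sub:1} is available, the proof is purely formal. The only points that need attention are keeping the indices in the long exact sequence \eqref{def:Gysin-sequence} straight, and checking that the sole group which could break exactness at the crucial spot, $\bigoplus_{x\in X^{(i)}}H^{-1}(x,R(0))$, lies in the vanishing range of Lemma \ref{lemma:cohomology-of-point} \ref{P6}. This is precisely why the weakened form of axiom $(P4)$ adopted in Remark \ref{rem:Borel-Moore-axioms} is enough here; the argument is that of \cite{alexandrou-schreieder}.
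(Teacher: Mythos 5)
Your argument is correct and is essentially the paper's: the paper proves this lemma by pointing to \cite[Lemma 3.1]{alexandrou-schreieder} ("the proof remains the same"), and that proof is exactly the localisation-sequence diagram chase you carry out, with the indices and the crucial vanishing $\bigoplus_{x\in X^{(i)}}H^{-1}(x,R(0))=0$ checked correctly against the weakened axiom of Lemma \ref{lemma:cohomology-of-point} \ref{P6} and Corollary \ref{cor:vanishing-consequence}. Nothing further is needed.
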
\begin{proof} This is the \'etale Borel-Moore motivic version of \cite[Lemma 3.1]{alexandrou-schreieder}. The proof remains the same.\end{proof}
    We shall need the following simple observation.
    \begin{lemma} Let $X$ be an algebraic $k$-scheme and let $\ell$ be a prime invertible in $k$. Fix $i,j\in \Z$ with $i>j$. We have \begin{align} \label{eq:vanishing} H^{i}_{\BMM}({F_{s}X}_{\et},\Q_{\ell}(j))=0\ \text{for all}\ 0\leq s<i-j.\end{align} In particular, \begin{align}\label{lemma:N^{i-1(Q)}}N^{s+1}H^{i}_{\BMM}(X_{\et},\Q_{\ell}(j))=H^{i}_{\BMM}(X_{\et},\Q_{\ell}(j))\ \text{for all}\ 0\leq s<i-j.\end{align}\end{lemma}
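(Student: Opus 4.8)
The plan is to deduce \eqref{eq:vanishing} by an induction on the filtration level $s$, fed by the localisation sequence \eqref{def:Gysin-sequence} and by the vanishing $H^{a}(x,\Q_{\ell}(b))=0$ for $a>b$ supplied by Lemma \ref{lemma:cohomology-of-point} \ref{P6}. Concretely, I will prove the following statement by induction on $s\geq0$: for all $i,j\in\Z$ and all $s$ with $0\leq s<i-j$ (equivalently $i-s>j$) one has $H^{i}_{\BMM}({F_{s}X}_{\et},\Q_{\ell}(j))=0$. The point is that the hypothesis $i-s>j$ is exactly what forces the stratum contributions in degree $i-2s$ and weight $j-s$ to vanish, since $i-2s>j-s\iff s<i-j$; this inequality, propagated down the coniveau filtration, is the engine of the whole argument.

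For the base case $s=0$ one argues as in the proof of Corollary \ref{cor:rational-equivalence}: the second isomorphism of Lemma \ref{lemma:topological-invariance} together with \cite[Lemma 5.6]{Sch-refined} identify $H^{i}_{\BMM}({F_{0}X}_{\et},\Q_{\ell}(j))$ with $\bigoplus_{\eta\in X^{(0)}}H^{i}(\eta,\Q_{\ell}(j))$, and each summand vanishes by Lemma \ref{lemma:cohomology-of-point} \ref{P6} because $i>j$. For the inductive step, fix $s$ with $1\leq s<i-j$ and assume the assertion for $s-1$ (which is legitimate, as $s-1<s<i-j$). The relevant stretch of \eqref{def:Gysin-sequence} reads
\[ \bigoplus_{x\in X^{(s)}}H^{i-2s}(x,\Q_{\ell}(j-s))\overset{\iota_{\ast}}{\longrightarrow}H^{i}_{\BMM}({F_{s}X}_{\et},\Q_{\ell}(j))\longrightarrow H^{i}_{\BMM}({F_{s-1}X}_{\et},\Q_{\ell}(j)). \]
Its left-hand term vanishes by Lemma \ref{lemma:cohomology-of-point} \ref{P6} (since $i-2s>j-s$), and its right-hand term vanishes by the inductive hypothesis; exactness then forces $H^{i}_{\BMM}({F_{s}X}_{\et},\Q_{\ell}(j))=0$, completing the induction and hence \eqref{eq:vanishing}.

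The identity \eqref{lemma:N^{i-1(Q)}} then follows formally: by the definition \eqref{eq:coniveau-filtration}, $N^{s+1}H^{i}_{\BMM}(X_{\et},\Q_{\ell}(j))$ is the kernel of the restriction map $H^{i}_{\BMM}(X_{\et},\Q_{\ell}(j))\to H^{i}_{\BMM}({F_{s}X}_{\et},\Q_{\ell}(j))$, whose target is $0$ for $0\leq s<i-j$ by \eqref{eq:vanishing}, so this kernel is the full group. I do not expect a genuine obstacle in this lemma; the only points demanding care are the index bookkeeping — making sure the inequality $i-s>j$ survives at every level of the filtration so that Lemma \ref{lemma:cohomology-of-point} \ref{P6} keeps applying to the contributions $H^{i-2s}(x,\Q_{\ell}(j-s))$ of the points $x\in X^{(s)}$ — and the fact that the base-case identification $H^{i}_{\BMM}({F_{0}X}_{\et},\Q_{\ell}(j))=\bigoplus_{\eta\in X^{(0)}}H^{i}(\eta,\Q_{\ell}(j))$ requires routing through Lemma \ref{lemma:topological-invariance} so as not to assume $X$ reduced or $k$ perfect.
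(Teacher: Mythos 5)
Your proof is correct and follows essentially the same route as the paper: the paper likewise combines the localisation sequence \eqref{def:Gysin-sequence} with the vanishing of Lemma \ref{lemma:cohomology-of-point} \ref{P6} to pass from $F_{s}X$ down to $F_{0}X$ (phrased as a chain of isomorphisms rather than an explicit induction), and then identifies $H^{i}_{\BMM}({F_{0}X}_{\et},\Q_{\ell}(j))$ with $\bigoplus_{\eta\in X^{(0)}}H^{i}(\eta,\Q_{\ell}(j))$ via Lemma \ref{lemma:topological-invariance} and \cite[Lemma 5.6]{Sch-refined}, which vanishes by \ref{P6}. The deduction of \eqref{lemma:N^{i-1(Q)}} from \eqref{eq:vanishing} via the definition \eqref{eq:coniveau-filtration} is also exactly the paper's (implicit) argument.
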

    \begin{proof} Clearly, it suffices to prove only \eqref{eq:vanishing}. The localisation sequence \eqref{def:Gysin-sequence} together with the second vanishing result in Lemma \ref{lemma:cohomology-of-point} \ref{P6}, yield isomorphisms \[H^{i}_{\BMM}({F_{s}X}_{\et},\Q_{\ell}(j))\cong H^{i}_{\BMM}({F_{s-1}X}_{\et},\Q_{\ell}(j))\] for all $1\leq s<i-j$. By \cite[Lemma 5.6]{Sch-refined}, the second isomorphism in Lemma \ref{lemma:topological-invariance} gives $$H^{i}_{\BMM}({F_{0}X}_{\et},\Q_{\ell}(j))=\bigoplus_{\eta\in X^{(0)}} H^{i}(\eta,\Q_{\ell}(j)).$$ To conclude, observe that the last group is zero by the second vanishing result in Lemma \ref{lemma:cohomology-of-point} \ref{P6}.\end{proof}
    Let $X$ be an algebraic $k$-scheme and let $\ell$ be a prime invertible in $k$. By \cite[Lemma 8.1]{Sch-refined}, there is a canonical isomorphism  
    \begin{align}\label{def:varphi_{r}}
        \varphi_{r}:\CH^{i}(X)[\ell^{r}]\stackrel{\cong}\longrightarrow  \frac{\ker\left(\del\circ \iota_\ast:\bigoplus_{x\in X^{(i-1)}} H^1(x,\Z/\ell^{r}(1))  \longrightarrow  \bigoplus_{x\in X^{(i)}}\Z/\ell^r[x] \right) }{\ker\left(\del\circ \iota_\ast:\bigoplus_{x\in X^{(i-1)}}H^1( x,\Z_{\ell}(1)) \longrightarrow  \bigoplus_{x\in X^{(i)}}\Z_\ell[x] \right)},
     \end{align}
 which one describes as follows: Pick a cycle $[z]\in \CH^{i}(X)[\ell^{r}]$. Then by \eqref{def:ell-adic-Chow-group}, we have that $\ell^{r}z=\partial\circ\iota_{\ast}(\xi)$ for some \[\xi\in \bigoplus_{x\in X^{(i-1)}} H^1(x,\Z_{\ell}(1)).\] We denote by $\bar{\xi}$ the reduction of $\xi$ modulo $\ell^{r}$ and set $\varphi_{r}([z]):=[\bar{\xi}]$.\par Next we construct the homomorphism $\beta^{i}_{X}$ as promised at the beginning of this section.\begin{lemma}\label{lemma:beta^{i}_{X}} Let $X$ be an algebraic $k$-scheme and let $\ell$ be a prime invertible in $k$. There is a well-defined cycle class map $\beta^{i}_{X}$ on torsion cycles \begin{align}\label{def:beta_X}\beta^{i}_{X}:\CH^{i}(X)[\ell^{\infty}]\longrightarrow H^{2i-1}_{\BM}(X_{\et},\Q_{\ell}/\Z_{\ell}(i))/H^{2i-1}_{\BMM}(X_{\et},\Q_{\ell}(i)),\end{align} whose image is given by \[\im(\beta^{i}_{X})=N^{i-1}H^{2i-1}_{\BM}(X_{\et},\Q_{\ell}/\Z_{\ell}(i))/H^{2i-1}_{\BMM}(X_{\et},\Q_{\ell}(i)).\]\end{lemma}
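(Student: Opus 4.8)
The plan is to carry out, in the present axiomatic framework, the construction of Bloch's map $\lambda^{i}_{X}$ from \cite{alexandrou-schreieder}. Proposition \ref{prop:twisted-borel-moore-axioms}, Lemma \ref{lemma:cohomology-of-point} and Corollary \ref{cor:rational-equivalence} show that $H^{\bullet}_{\BMM}({-}_{\et},R(j))$ is an $\ell$-adic twisted Borel-Moore cohomology theory adapted to rational equivalence (Remark \ref{rem:Borel-Moore-axioms}), so the recipe of loc.\ cit.\ should apply essentially verbatim. Throughout I identify $H^{2i-1}_{\BM}(X_{\et},\Q_{\ell}/\Z_{\ell}(i))$ with $\colim_{r}H^{2i-1}_{\BMM}(X_{\et},\Z/\ell^{r}(i))$ via \eqref{etale=motivic} and \eqref{def:Q/Z-coefficients}, and I read the target of $\beta^{i}_{X}$ as the cokernel of the map $H^{2i-1}_{\BMM}(X_{\et},\Q_{\ell}(i))\to H^{2i-1}_{\BM}(X_{\et},\Q_{\ell}/\Z_{\ell}(i))$ induced by $\Q_{\ell}\to\Q_{\ell}/\Z_{\ell}$.

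To define $\beta^{i}_{X}$ I would take $[z]\in\CH^{i}(X)[\ell^{r}]$ and, exactly as in the description of $\varphi_{r}$ after \eqref{def:varphi_{r}}, choose $\xi\in\bigoplus_{x\in X^{(i-1)}}H^{1}(x,\Z_{\ell}(1))$ with $\partial\circ\iota_{\ast}(\xi)=\ell^{r}z$; such $\xi$ exists by \eqref{def:ell-adic-Chow-group} since $\ell^{r}[z]=0$. Its reduction $\bar{\xi}$ modulo $\ell^{r}$ lies in $\ker(\partial\circ\iota_{\ast}\colon\bigoplus_{x\in X^{(i-1)}}H^{1}(x,\Z/\ell^{r}(1))\to\bigoplus_{x\in X^{(i)}}\Z/\ell^{r}[x])$, so by Lemma \ref{lemma:coniveau} (with $R=\Z/\ell^{r}$) the class $\iota_{\ast}(\bar{\xi})\in H^{2i-1}_{\BMM}({F_{i-1}X}_{\et},\Z/\ell^{r}(i))$ is the image, under the restriction injection of that Lemma, of a unique class $\tilde{\xi}\in N^{i-1}H^{2i-1}_{\BMM}(X_{\et},\Z/\ell^{r}(i))$. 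I would then let $\beta^{i}_{X}([z])$ be the image of $\tilde{\xi}$ under the colimit structure map $H^{2i-1}_{\BMM}(X_{\et},\Z/\ell^{r}(i))\to H^{2i-1}_{\BM}(X_{\et},\Q_{\ell}/\Z_{\ell}(i))$, read off in the cokernel described above.

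Next I would verify well-definedness. First, replacing the integral representative $z$ by $z+\partial\circ\iota_{\ast}(w)$ changes $\xi$ by $\ell^{r}w$, hence leaves $\bar{\xi}$ unchanged. Second, two choices of $\xi$ differ by an element of $\ker(\partial\circ\iota_{\ast})$ with $\Z_{\ell}$-coefficients, so by Lemma \ref{lemma:coniveau} with $R=\Z_{\ell}$ the difference $\tilde{\xi}-\tilde{\xi}'$ is the reduction modulo $\ell^{r}$ of a class in $H^{2i-1}_{\BMM}(X_{\et},\Z_{\ell}(i))$; since the composite coefficient homomorphism $\Z_{\ell}\to\Z/\ell^{r}\hookrightarrow\Q_{\ell}/\Z_{\ell}$ factors through $\Z_{\ell}\hookrightarrow\Q_{\ell}\to\Q_{\ell}/\Z_{\ell}$, the image of $\tilde{\xi}-\tilde{\xi}'$ in $H^{2i-1}_{\BM}(X_{\et},\Q_{\ell}/\Z_{\ell}(i))$ lands in $\im(H^{2i-1}_{\BMM}(X_{\et},\Q_{\ell}(i)))$ and is therefore $0$ in the cokernel. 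Third, passing from $[z]\in\CH^{i}(X)[\ell^{r}]$ to $[z]\in\CH^{i}(X)[\ell^{r+1}]$ one may take $\ell\xi$ for the exponent $r+1$; since the transition maps $\Z/\ell^{r}\to\Z/\ell^{r+1}$ computing $\Q_{\ell}/\Z_{\ell}=\colim_{r}\Z/\ell^{r}$ are multiplication by $\ell$ and are compatible with the residue sequence \eqref{def:Gysin-sequence} (hence with $\iota_{\ast}$), the two recipes give the same element of $H^{2i-1}_{\BM}(X_{\et},\Q_{\ell}/\Z_{\ell}(i))$. Additivity of $\beta^{i}_{X}$ is then clear.

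For the image, I would note that for each $r$ and each $\bar\zeta\in\ker(\partial\circ\iota_{\ast}\colon\bigoplus_{x\in X^{(i-1)}}H^{1}(x,\Z/\ell^{r}(1))\to\bigoplus_{x\in X^{(i)}}\Z/\ell^{r}[x])$, Lemma \ref{lemma:cohomology-of-point} \ref{P5} permits a lift to $\zeta\in\bigoplus_{x\in X^{(i-1)}}H^{1}(x,\Z_{\ell}(1))$ (the reduction $k(x)^{\ast}\otimes\Z_{\ell}\to k(x)^{\ast}/\ell^{r}$ being surjective); then $\partial\circ\iota_{\ast}(\zeta)=\ell^{r}z$ for a unique cycle $z$, with $[z]\in\CH^{i}(X)[\ell^{r}]$ and $\varphi_{r}([z])=[\bar\zeta]$, so every such $\bar\zeta$ is hit. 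Combined with Lemma \ref{lemma:coniveau}, which identifies $\iota_{\ast}$ of these kernels with $N^{i-1}H^{2i-1}_{\BMM}(X_{\et},\Z/\ell^{r}(i))$, this shows that $\im(\beta^{i}_{X})$ is, modulo $H^{2i-1}_{\BMM}(X_{\et},\Q_{\ell}(i))$, the union over $r$ of the images of $N^{i-1}H^{2i-1}_{\BMM}(X_{\et},\Z/\ell^{r}(i))$ in $H^{2i-1}_{\BM}(X_{\et},\Q_{\ell}/\Z_{\ell}(i))$, i.e. $\colim_{r}N^{i-1}H^{2i-1}_{\BMM}(X_{\et},\Z/\ell^{r}(i))$. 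It remains to identify this colimit with $N^{i-1}H^{2i-1}_{\BM}(X_{\et},\Q_{\ell}/\Z_{\ell}(i))$: since filtered colimits are exact and $H^{2i-1}_{\BM}({-}_{\et},\Q_{\ell}/\Z_{\ell}(i))=\colim_{r}H^{2i-1}_{\BMM}({-}_{\et},\Z/\ell^{r}(i))$ for both $X$ and $F_{i-2}X$, applying $\colim_{r}$ to the left-exact sequences defining $N^{i-1}H^{2i-1}_{\BMM}({-}_{\et},\Z/\ell^{r}(i))$ (cf. \eqref{eq:coniveau-filtration}) yields the desired equality. The step I expect to be the main obstacle is the coefficient bookkeeping in the second well-definedness point: one must check that the indeterminacy in the lift $\xi$ contributes precisely an element of $\im(H^{2i-1}_{\BMM}(X_{\et},\Q_{\ell}(i)))$, neither more nor less; granting this, the image computation is a formal consequence of Lemma \ref{lemma:coniveau} and exactness of filtered colimits.
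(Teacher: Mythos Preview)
Your proposal is correct and follows essentially the same approach as the paper: both construct $\beta^{i}_{X}$ by combining the isomorphism $\varphi_{r}$ of \eqref{def:varphi_{r}} with Lemma \ref{lemma:coniveau} for $R=\Z_{\ell}$ and $R=\Z/\ell^{r}$, then pass to the colimit over $r$ and invoke \eqref{lemma:N^{i-1(Q)}}. The only difference is packaging: the paper organizes the construction via a commutative square of surjections $\iota_{\ast}$ and passes to cokernels of the vertical reduction maps (so well-definedness is automatic), whereas you unpack this elementwise and check the three well-definedness points and the image computation by hand; your extra care in justifying $\colim_{r}N^{i-1}=N^{i-1}$ via exactness of filtered colimits is implicit in the paper.
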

 \begin{proof} By virtue of \eqref{etale=motivic}, we identify Borel-Moore motivic \'etale with Borel-Moore \'etale cohomology whenever $R=\Z/\ell^{r}$.\par We have a commutative square \begin{center}\begin{tikzcd}
{\ker\left(\bigoplus_{x\in X^{(i-1)}}H^1( x,\Z_{\ell}(1)) \overset{\partial\circ \iota_\ast}{\to}  \bigoplus_{x\in X^{(i)}}\Z_\ell[x] \right)} \arrow[r, "\iota_{\ast}", two heads] \arrow[d]  & {N^{i-1}H^{2i-1}_{\BMM}(X_{\et},\Z_{\ell}(i))} \arrow[d] \\
{\ker\left(\bigoplus_{x\in X^{(i-1)}} H^1(x,\Z/\ell^{r}(1)) \overset{\partial\circ \iota_\ast}{\to}  \bigoplus_{x\in X^{(i)}}\Z/\ell^r[x] \right)} \arrow[r, "\iota_{\ast}", two heads] & {N^{i-1}H^{2i-1}_{\BM}(X_{\et},\Z/\ell^{r}(i))},\end{tikzcd}\end{center}
 where the horizontal maps are well-defined and surjective by Lemma \ref{lemma:coniveau} and where the vertical maps are simply given by functoriality in the coefficients. Passing to the cokernels and pre-composing with the isomorphism $\varphi_{r}$ from \eqref{def:varphi_{r}}, we obtain a cycle class map \begin{align}\label{def:beta'_X}\beta^{i}_{X,r}:\CH^{i}(X)[\ell^{r}]\longrightarrow H^{2i-1}_{\BM}(X_{\et},\Z/\ell^{r}(i))/N^{i-1}H^{2i-1}_{\BMM}(X_{\et},\Z_{\ell}(i)),\end{align} whose image is given by \[\im(\beta^{i}_{X,r})=N^{i-1}H^{2i-1}_{\BM}(X_{\et},\Z/\ell^{r}(i))/N^{i-1}H^{2i-1}_{\BMM}(X_{\et},\Z_{\ell}(i)).\] Taking the direct limit of \eqref{def:beta'_X} over all positive integers $r\geq 1$, we obtain the desire homomorphism \eqref{def:beta_X}, once we use $N^{i-1}H^{2i-1}_{\BMM}(X_{\et},\Q_{\ell}(i))=H^{2i-1}_{\BMM}(X_{\et},\Q_{\ell}(i))$ (see \eqref{lemma:N^{i-1(Q)}}).\end{proof}
 The following Lemma shows that $\beta^{i}_{X}$ and $\alpha^{i}_{X,\tors}$ can be identified, when $X$ is a smooth and equi-dimensional algebraic $k$-scheme.
 \begin{lemma}\label{lemma:beta_X=alpha_X} Let $X$ be an algebraic $k$-scheme which admits a closed embedding into a smooth and equi-dimensional algebraic $k$-scheme. Let $\ell$ be a prime invertible in $k$. The following triangle:\begin{center}
     \begin{tikzcd}
                                                                                    & {H^{2i-1}_{\BM}(X_{\et},\Q_{\ell}/\Z_{\ell}(i))/H_{\BMM}^{2i-1}(X_{\et},\Q_{\ell}(i))} \arrow[d, "\delta", "\cong"'] \\
{\CH^{i}(X)[\ell^{\infty}]} \arrow[ru, "\beta^{i}_{X}"] \arrow[r, "\tilde{\alpha}^{i}_{X}"] & {H^{2i}_{\BMM}(X_{\et},\Z_{\ell}(i))[\ell^{\infty}]}                                                
\end{tikzcd}
 \end{center}
 commutes, where $\delta$ is the isomorphism induced from the Bockstein map (see Prop. \ref{prop:twisted-borel-moore-axioms} \ref{P3}) and where $\tilde{\alpha}^{i}_{X}$ and $\beta^{i}_{X}$ are the maps from \eqref{def:kahn-map} and \eqref{def:beta_X}, respectively.
 \end{lemma}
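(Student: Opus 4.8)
The plan is to verify the commutativity on $\ell^{r}$-torsion for a fixed $r$ and then pass to the colimit over $r$. So fix $r\ge 1$ and $[z]\in\CH^{i}(X)[\ell^{r}]$, and choose a cycle $z=\sum_{x\in X^{(i)}}n_{x}[x]\in\bigoplus_{x\in X^{(i)}}\Z_{\ell}[x]$ representing $[z]$. Since $\ell^{r}[z]=0$, the presentation \eqref{def:ell-adic-Chow-group} supplies $\xi\in\bigoplus_{x\in X^{(i-1)}}H^{1}(x,\Z_{\ell}(1))$ with $\ell^{r}z=(\partial\circ\iota_{\ast})(\xi)$, the composite being that of \eqref{eq:comp}; write $\bar{\xi}$ for its reduction mod $\ell^{r}$, so that $(\partial\circ\iota_{\ast})(\bar{\xi})=\overline{\ell^{r}z}=0$. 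By Lemma \ref{lemma:coniveau}, the class $\iota_{\ast}(\bar{\xi})\in H^{2i-1}_{\BM}({F_{i-1}X}_{\et},\Z/\ell^{r}(i))$ lies in $N^{i-1}H^{2i-1}_{\BMM}(X_{\et},\Z/\ell^{r}(i))$, hence is the image under restriction to $F_{i-1}X$ of a class $c\in H^{2i-1}_{\BM}(X_{\et},\Z/\ell^{r}(i))$, unique because that restriction is injective (Lemma \ref{lemma:coniveau}); by the construction of Lemma \ref{lemma:beta^{i}_{X}} (see \eqref{def:varphi_{r}}, \eqref{def:beta'_X}) one has $\beta^{i}_{X,r}([z])=[c]$. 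On the other side $\tilde{\alpha}^{i}_{X}([z])=\iota_{\ast}(z)$ via \eqref{eq:pushforward} and Corollary \ref{cor:vanishing-consequence}, and this class is $\ell^{r}$-torsion since $\iota_{\ast}\circ\partial=0$. As the Bockstein $\delta$ of Proposition \ref{prop:twisted-borel-moore-axioms}\,\ref{P3} kills reductions, it descends to the quotient in \eqref{def:beta'_X}, and the lemma is reduced to the single identity $\delta(c)=\iota_{\ast}(z)$ in $H^{2i}_{\BMM}(X_{\et},\Z_{\ell}(i))$.

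To prove this I would chase the commutative diagram obtained by placing the localisation sequences \eqref{def:Gysin-sequence} with $s=i$, for $\Z_{\ell}$- and for $\Z/\ell^{r}$-coefficients (linked by reduction mod $\ell^{r}$), on top of the Bockstein sequences of Proposition \ref{prop:twisted-borel-moore-axioms}\,\ref{P3}. Beyond exactness and commutativity, the inputs are the compatibility of $\iota_{\ast}$ and of the residue maps with reduction of coefficients and with $\delta$ (built into the six-functor formalism, cf.\ Proposition \ref{prop:twisted-borel-moore-axioms}), the vanishing $H^{2}(x,\Z_{\ell}(1))=0$ for all points $x$ (as in the proof of Corollary \ref{cor:rational-equivalence}), and the stabilisation $H^{2i}_{\BMM}({F_{i}X}_{\et},\Z_{\ell}(i))=H^{2i}_{\BMM}(X_{\et},\Z_{\ell}(i))$ from Corollary \ref{cor:vanishing-consequence}. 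First, $c|_{F_{i-1}X}=\iota_{\ast}(\bar{\xi})=\red(\iota_{\ast}(\xi))$ is a reduction, so its Bockstein vanishes; since $\delta$ commutes with restriction, $\delta(c)$ restricts to $0$ on $F_{i-1}X$, hence $\delta(c)\in\ker\bigl(H^{2i}_{\BMM}(X_{\et},\Z_{\ell}(i))\to H^{2i}_{\BMM}({F_{i-1}X}_{\et},\Z_{\ell}(i))\bigr)=\im(\iota_{\ast})$; write $\delta(c)=\iota_{\ast}(w)$ with $w\in\bigoplus_{x\in X^{(i)}}\Z_{\ell}[x]$, a priori determined only modulo $\im(\partial)$. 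Secondly, a snake-lemma computation in the same diagram identifies $w$: because $c$ lifts, over $F_{i-1}X$ and along reduction, to the integral class $\iota_{\ast}(\xi)$ whose residue $\partial(\iota_{\ast}(\xi))=\ell^{r}z$ equals $\ell^{r}$ times $z$, the connecting-map zig-zag returns $w\equiv z\pmod{\im(\partial)}$, whence $\delta(c)=\iota_{\ast}(z)=\tilde{\alpha}^{i}_{X}([z])$. Passing to the colimit over $r$ (Lemma \ref{lemma:beta^{i}_{X}} for $\beta^{i}_{X}$, together with compatibility of the Bocksteins and of $\tilde{\alpha}^{i}_{X}$ with the transition maps) yields the commutativity of the triangle.

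I expect the second half of the chase to be the main obstacle. From $\delta(c)=\iota_{\ast}(w)$ by itself, $w$ is pinned down only modulo $\im(\partial)$ and, a priori, only up to an $\ell^{r}$-divisible ambiguity; what has to be checked is that $w$ may be taken equal to $z$ on the nose. This is exactly where one uses that $c$ was built from an honest integral lift $\xi$ of $\bar{\xi}$: in the snake diagram one divides the class $\partial(\iota_{\ast}(\xi))=\ell^{r}z$ by $\ell^{r}$ and recovers $z$, with no residual ambiguity. The computation is the verbatim motivic analogue of the one carried out for $\lambda^{i}_{X}$ (and for Jannsen's $\cl^{i}_{X}$) in \cite{Sch-refined, alexandrou-schreieder}, and it transfers here because $H^{i}_{\BMM}({-}_{\et},R(j))$ is an $\ell$-adic twisted Borel--Moore cohomology theory adapted to rational equivalence (Remark \ref{rem:Borel-Moore-axioms}, Corollary \ref{cor:rational-equivalence}); the hypothesis that $X$ embeds into a smooth equidimensional $k$-scheme enters only through \eqref{def:ell-adic-Chow-group} and the isomorphism $\varphi_{r}$ of \eqref{def:varphi_{r}}. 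The remaining points — well-definedness of $c$, the reduction to finite level, the membership $\delta(c)\in\im(\iota_{\ast})$, and the colimit passage — are formal.
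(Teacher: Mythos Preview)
Your approach is correct but differs from the paper's. You carry out a direct diagram chase in Borel--Moore \'etale motivic cohomology: unwind $\beta^{i}_{X,r}$ via $\varphi_r$ and Lemma~\ref{lemma:coniveau}, show $\delta(c)$ lies in $\im(\iota_\ast)$, and then run the snake-lemma zig-zag through the grid of localisation and Bockstein sequences to identify the preimage with $z$. This is precisely the motivic rerun of the computation from \cite[Lemma~3.5]{alexandrou-schreieder}, and it goes through because the theory $H^{i}_{\BMM}(-_{\et},R(j))$ satisfies the same axiomatic package (Remark~\ref{rem:Borel-Moore-axioms}, Corollary~\ref{cor:rational-equivalence}).

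The paper instead avoids redoing that chase. It first reduces to $k$ finitely generated via Lemma~\ref{lemma:colimit}, then pushes the problem down to codimension~$1$ by choosing a closed $W\hookrightarrow X$ of pure codimension $i-1$ with $[z]\in\im(\CH^{1}(W)[\ell^{\infty}]\to\CH^{i}(X)[\ell^{\infty}])$ and using functoriality of all maps under $\iota_\ast$. At that point it compares to the pro-\'etale theory via the maps $\rho^{i,j}_W$ of Lemma~\ref{lemma:comparison maps}: Lemma~\ref{lemma:prop-rho}\,\ref{rho-2} (which needs $k$ finitely generated) says $\rho^{2,1}_{W,\tors}$ is an isomorphism, so the identity $\delta\circ\beta^{1}_{W}=\tilde\alpha^{1}_{W}$ is read off from the already-known $-\delta\circ\lambda^{1}_{W}=\cl^{1}_{W,\tors}$ of \cite[Lemma~3.5]{alexandrou-schreieder}. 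In short, the paper \emph{imports} the diagram chase from the pro-\'etale setting rather than repeating it motivically.

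What each buys: your argument is more self-contained (it needs none of the comparison Lemmas~\ref{lemma:comparison maps}--\ref{lemma:prop-rho}, nor the reduction to finitely generated $k$), while the paper's argument is shorter on the page because it outsources the technical step. One small inaccuracy: you say the embeddability hypothesis enters through \eqref{def:ell-adic-Chow-group} and \eqref{def:varphi_{r}}, but both of those hold for arbitrary algebraic $k$-schemes; in fact neither proof visibly uses this hypothesis, so your argument (like the paper's) appears to work without it.
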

 \begin{proof} To prove commutativity for the triangle in question, i.e. $\delta\circ\beta^{i}_{X}=\tilde{\alpha}^{i}_{X}$, we would like to utilise the analogous result from \cite[Lemma 3.5]{alexandrou-schreieder}. By virtue of Lemma \ref{lemma:colimit}, it suffices to prove the claim in case the field $k$ is finitely generated over its prime field.\par Let $[z]\in\CH^{i}(X)[\ell^{\infty}]$ and choose a closed subset $\iota:W\hookrightarrow X$ of pure codimension $i-1$, such that $[z]\in\im(\CH^{1}(W)[\ell^{\infty}]\overset{\iota_{\ast}}{\to}\CH^{i}(X)[\ell^{\infty}])$. Compatibility with respect to proper pushforward reduces our task to showing that \[\delta\circ\beta^{1}_{W}([z])=\tilde{\alpha}^{1}_{W}([z]).\] We consider the following diagram\begin{center}\label{eq:diagram}
     \begin{tikzcd}
{\CH^{1}(W)[\ell^{\infty}]} \arrow[r, "\beta^{1}_{W}"] \arrow[d, equals]                               & {\frac{H^{1}_{\BM}(W_{\et},\Q_{\ell}/\Z_{\ell}(1))}{H^{1}_{\BMM}(W_{\et},\Q_{\ell}(1))}} \arrow[d, "{-\rho^{1,1}_{W}}"] \arrow[r, "\delta", "\cong"'] & {H^{2}_{\BMM}(W_{\et},\Z_{\ell}(1))_{\tors}} \arrow[d, "{\rho^{2,1}_{W,\tors}}"] \\
{\CH^{1}(W)[\ell^{\infty}]} \arrow[r, "\lambda^{1}_{W}", "\cong"'] \arrow[rr, "{\cl^{1}_{W,\tors}}"', bend right, shift right] & {\frac{H^{1}_{\BM}(W_{\proet},\Q_{\ell}/\Z_{\ell}(1))}{H^{1}_{\BM}(W_{\proet},\Q_{\ell}(1))}} \arrow[r, "-\delta", "\cong"']                 & {H^{2}_{\BM}(W_{\proet},\Z_{\ell}(1))_{\tors},}                       
\end{tikzcd}
\end{center}
 where $\rho^{i,j}_{W}$ are the natural homomorphisms from \eqref{def:comparison-map}. The cycle class map $\lambda^{1}_{W}$ is an isomorphism by \cite[Cor. 4.4]{alexandrou-schreieder}. The commutativity of the first square follows from the construction of $\beta^{1}_{W}$, which is analogous to the one of $\lambda^{1}_{W}$ (see \cite[pp. 6, 13]{alexandrou-schreieder}) as well as from the fact that $\rho^{1,1}_{W}$ is compatible with proper pushforward. In addition, the equality $\cl^{1}_{W,\tors}=-\delta\circ\lambda^{1}_{W}$ holds by \cite[Lemma 3.5]{alexandrou-schreieder}.\par We let $\gamma:=\delta\circ\beta^{1}_{W}$. By Lemma \ref{lemma:prop-rho} \ref{rho-2}, the natural map $\rho^{2,1}_{W,\tors}$ is an isomorphism. Hence $\rho^{2,1}_{W,\tors}\circ\tilde{\alpha}^{1}_{W,\tors}=\cl^{1}_{W,\tors}$ yields  $\tilde{\alpha}^{1}_{W,\tors}=\gamma$. The proof of the Lemma is complete.\end{proof}
 
 \begin{lemma}\label{lemma:codomains} Let $X$ be an algebraic scheme over a finitely generated field $k$ and let $\ell$ be a prime invertible in $k$. For $R\in \{\Q_{\ell}/\Z_{\ell},\Q_{\ell}\}$, the canonical homomorphisms $\rho^{2i-1,i}_{X}:H^{2i-1}_{\BMM}(X_{\et},R(i))\to H^{2i-1}_{\BM}(X_{\proet},R(i))$ from \eqref{def:comparison-map} induce the following isomorphism: \begin{align}\label{eq:codomains-beta_{X}-vs-lambda_{X}} \frac{H^{2i-1}_{\BMM}(X_{\et},\Q_{\ell}/\Z_{\ell}(i))}{H^{2i-1}_{\BMM}(X_{\et},\Q_{\ell}(i))}\cong \frac{H^{2i-1}_{\BM}(X_{\proet},\Q_{\ell}/\Z_{\ell}(i))}{N^{i-1}H^{2i-1}_{\BM}(X_{\proet},\Q_{\ell}(i)).}
 \end{align}\end{lemma}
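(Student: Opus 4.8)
The plan is to use that $\rho^{2i-1,i}_{X}$ is already an isomorphism with $\Q_{\ell}/\Z_{\ell}$-coefficients, so that \eqref{eq:codomains-beta_{X}-vs-lambda_{X}} reduces to matching the two ``denominators'' inside one common group. First I would record that $\rho^{2i-1,i}_{X}\colon H^{2i-1}_{\BMM}(X_{\et},\Q_{\ell}/\Z_{\ell}(i))\to H^{2i-1}_{\BM}(X_{\proet},\Q_{\ell}/\Z_{\ell}(i))$ is an isomorphism: by \eqref{def:Q/Z-coefficients} it is the colimit over $r$ of the maps $\rho^{2i-1,i}_{X}$ with $\Z/\ell^{r}$-coefficients, each of which is an isomorphism by Lemma \ref{lemma:comparison maps}. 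Hence the assertion becomes: under this isomorphism, the image of $H^{2i-1}_{\BMM}(X_{\et},\Q_{\ell}(i))$ in $H^{2i-1}_{\BMM}(X_{\et},\Q_{\ell}/\Z_{\ell}(i))$ is carried \emph{onto} the image of $N^{i-1}H^{2i-1}_{\BM}(X_{\proet},\Q_{\ell}(i))$ in $H^{2i-1}_{\BM}(X_{\proet},\Q_{\ell}/\Z_{\ell}(i))$. It is convenient to keep in mind the reformulation via the Bockstein (Prop. \ref{prop:twisted-borel-moore-axioms} \ref{P3}), under which the left-hand side of \eqref{eq:codomains-beta_{X}-vs-lambda_{X}} is $H^{2i}_{\BMM}(X_{\et},\Z_{\ell}(i))_{\tors}$.

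The inclusion ``$\subseteq$'' is quick. The maps $\rho^{i,j}_{X}$ are compatible with restriction along open immersions (Lemma \ref{lemma:comparison maps}), hence with the coniveau filtrations \eqref{eq:coniveau-filtration}; since $H^{2i-1}_{\BMM}(X_{\et},\Q_{\ell}(i))=N^{i-1}H^{2i-1}_{\BMM}(X_{\et},\Q_{\ell}(i))$ by \eqref{lemma:N^{i-1(Q)}} (the case $i=1$ being trivial), $\rho^{2i-1,i}_{X}$ sends $H^{2i-1}_{\BMM}(X_{\et},\Q_{\ell}(i))$ into $N^{i-1}H^{2i-1}_{\BM}(X_{\proet},\Q_{\ell}(i))$. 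As $\rho^{i,j}_{X}$ is natural in the coefficients, it is compatible with the long exact sequences attached to $0\to\Z_{\ell}(i)\to\Q_{\ell}(i)\to\Q_{\ell}/\Z_{\ell}(i)\to 0$ for the two theories, and chasing the resulting commutative square yields ``$\subseteq$''. In particular $\rho^{2i-1,i}_{X}$ induces a well-defined \emph{surjective} homomorphism between the two quotients in \eqref{eq:codomains-beta_{X}-vs-lambda_{X}}, so the whole statement comes down to its injectivity, equivalently to the reverse inclusion ``$\supseteq$''.

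For ``$\supseteq$'', which I expect to be the main obstacle, I would argue on codimension-$(i-1)$ points. Given $\beta\in N^{i-1}H^{2i-1}_{\BM}(X_{\proet},\Q_{\ell}(i))$, Lemma \ref{lemma:coniveau} (with $\Q_{\ell}$-coefficients, for the pro-\'etale theory) lets me write its restriction to $F_{i-1}X$ as $\iota_{\ast}(\xi')$ for a family $\xi'=(\xi'_{x})_{x\in X^{(i-1)}}$ with $\xi'_{x}\in H^{1}_{\BM}(x_{\proet},\Q_{\ell}(1))$ and $\partial\circ\iota_{\ast}(\xi')=0$. The key local input is that the reduction $H^{1}_{\BMM}(x,\Q_{\ell}(1))\cong k(x)^{\ast}\otimes\Q_{\ell}\to H^{1}_{\BMM}(x,\Q_{\ell}/\Z_{\ell}(1))$ is surjective, because $H^{2}_{\BMM}(x,\Z_{\ell}(1))=0$ (as in the proof of Corollary \ref{cor:rational-equivalence}); combined with the isomorphism $\rho^{1,1}_{x}$ in $\Q_{\ell}/\Z_{\ell}$-coefficients, lifting componentwise produces $\eta=(\eta_{x})\in\bigoplus_{x\in X^{(i-1)}}H^{1}_{\BMM}(x,\Q_{\ell}(1))$ whose image in the $\Q_{\ell}/\Z_{\ell}$-cohomology of the points matches that of $\xi'$ and with $\partial\circ\iota_{\ast}(\eta)$ lying in the integral subgroup $\bigoplus_{x\in X^{(i)}}\Z_{\ell}[x]$. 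The heart of the proof is to remove this ``integral obstruction'': one must adjust $\eta$ by an element of $\bigoplus_{x}H^{1}_{\BMM}(x,\Z_{\ell}(1))$ so that $\partial\circ\iota_{\ast}(\eta)=0$, after which $\iota_{\ast}(\eta)$ is a class in $N^{i-1}H^{2i-1}_{\BMM}(X_{\et},\Q_{\ell}(i))=H^{2i-1}_{\BMM}(X_{\et},\Q_{\ell}(i))$ with the required image (using Lemma \ref{lemma:coniveau} and its colimit over $r$, together with injectivity of restriction to $F_{i-1}X$, to see that it restricts correctly). Equivalently, in the Bockstein picture one has to show that the resulting class in $H^{2i}_{\BMM}(X_{\et},\Z_{\ell}(i))$ vanishes; it is automatically torsion, it lies in $N^{i}H^{2i}_{\BMM}(X_{\et},\Z_{\ell}(i))=\im(\tilde{\alpha}^{i}_{X})$ (since its restriction to $F_{i-1}X$ is a Bockstein of a $\Q_{\ell}$-class), and it lies in $\ker\rho^{2i,i}_{X}$ (its $\rho$-image is $\delta'$ of something in the image of the pro-\'etale $\Q_{\ell}$-cohomology). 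So the real content is to rule out nonzero torsion in $\ker\rho^{2i,i}_{X}$ meeting $\im(\tilde{\alpha}^{i}_{X})$, which I would derive from Lemma \ref{lemma:prop-rho} \ref{rho-3} (divisible kernel, torsion-free cokernel) together with Lemma \ref{lemma:prop-rho} \ref{rho-1} and the compatibility of $\rho^{\ast,\ast}_{X}$ with the localization sequence on $X^{(i)}$ (which presents $\tilde{\alpha}^{i}_{X}$, and its pro-\'etale counterpart $\cl^{i}_{X}$, both as $\iota_{\ast}$ out of $\bigoplus_{x\in X^{(i)}}\Z_{\ell}[x]$). Controlling $\rho^{2i,i}_{X}$ on these coniveau-$i$ integral classes is the step I anticipate being the most delicate, and the one requiring that $k$ be finitely generated.
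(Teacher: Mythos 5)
Your reduction of \eqref{eq:codomains-beta_{X}-vs-lambda_{X}} to the equality of the two ``denominators'' inside the common group $H:=H^{2i-1}(X,\Q_{\ell}/\Z_{\ell}(i))$, and your inclusion ``$\subseteq$'', are fine and agree with the easy half of the paper's argument. The problem is the final step of ``$\supseteq$'': after producing the integral obstruction $z=\partial\circ\iota_{\ast}(\eta)\in\bigoplus_{x\in X^{(i)}}\Z_{\ell}[x]$, everything hinges on the assertion that $(\ker\rho^{2i,i}_{X})_{\tors}\cap\im(\tilde{\alpha}^{i}_{X})=0$, and this is neither proved nor derivable from the ingredients you cite. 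Lemma \ref{lemma:prop-rho} \ref{rho-3} only says the kernel is ($\ell$-)divisible, and a divisible group can perfectly well contain torsion; Lemma \ref{lemma:prop-rho} \ref{rho-1} gives surjectivity on torsion, not injectivity. In fact, chasing the two Bockstein sequences shows $(\ker\rho^{2i,i}_{X})_{\tors}\cong\im\bigl(H^{2i-1}_{\BM}(X_{\proet},\Q_{\ell}(i))\to H\bigr)/\im\bigl(H^{2i-1}_{\BMM}(X_{\et},\Q_{\ell}(i))\to H\bigr)$, i.e.\ your claim is a statement about the \emph{full} continuous $\Q_{\ell}$-cohomology with no coniveau restriction; it is strictly stronger than the lemma you are trying to prove (whose whole point is the presence of $N^{i-1}$ on the pro-\'etale side), and nothing in your sketch uses the hypothesis that $k$ is finitely generated, so the argument as proposed cannot close.

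The paper avoids exactly this by never confronting $\rho^{2i,i}_{X}$ directly: since the coniveau-$(i-1)$ classes on both sides (and your $\xi'$, which involves only finitely many points) are pushforwards from closed subschemes $Z\subset X$ of codimension $i-1$, one gets surjections $\colim_{Z}I^{1}_{\BMM}(Z_{\et})\twoheadrightarrow I^{i}_{\BMM}(X_{\et})$ and $\colim_{Z}I^{1}_{\BM}(Z_{\proet})\twoheadrightarrow I^{i}_{\BM}(X_{\proet})$, reducing the whole lemma to the case $i=1$ on each $Z$. There the comparison is exactly Lemma \ref{lemma:prop-rho} \ref{rho-2} (the isomorphism $\rho^{2,1}_{Z,\tors}$, which is where finite generation of $k$ is genuinely used, via $A^{1}=\CH^{1}$) combined with the commuting Bockstein squares. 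Concretely, your own construction can be repaired along these lines: work on a pure codimension-$(i-1)$ closed subscheme $Z$ containing the supports, where the obstruction lives in $H^{2}_{\BMM}(Z_{\et},\Z_{\ell}(1))_{\tors}\cong\CH^{1}(Z)_{\Z_{\ell},\tors}$ and $\rho^{2,1}_{Z,\tors}$ is injective; but as written, the key vanishing on $X$ itself is an unproven (and likely too strong) claim, so the proof has a genuine gap.
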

 \begin{proof} The groups $H^{2i-1}_{\BMM}(X_{\et},\Q_{\ell}/\Z_{\ell}(i))$ and $H^{2i-1}_{\BM}(X_{\proet},\Q_{\ell}/\Z_{\ell}(i))$ can be identified with Borel-Moore \'etale cohomology (see \eqref{etale=motivic} and \cite{BS}) and thus we can drop the subscripts. We set \[I^{i}_{\BMM}(X_{\et}):=\im\left(H^{2i-1}_{\BMM}(X_{\et},\Q_{\ell}(i))\longrightarrow H^{2i-1}(X,\Q_{\ell}/\Z_{\ell}(i))\right)\] and \[I^{i}_{\BM}(X_{\proet}):=\im\left(N^{i-1}H^{2i-1}_{\BM}(X_{\proet},\Q_{\ell}(i))\longrightarrow H^{2i-1}(X,\Q_{\ell}/\Z_{\ell}(i))\right)\] for $i\geq1$. To prove \eqref{eq:codomains-beta_{X}-vs-lambda_{X}}, it is enough to show that $I^{i}_{\BMM}(X_{\et})=I^{i}_{\BM}(X_{\proet})$. The containment “$\subset$” holds trivially as $$N^{i-1}H^{2i-1}_{\BMM}(X_{\et},\Q_{\ell}(i))=H^{2i-1}_{\BMM}(X_{\et},\Q_{\ell}(i))$$ by \eqref{lemma:N^{i-1(Q)}}. The commutative diagrams\begin{center}\begin{tikzcd}
{\underset{Z\subset X,\ \codim_{X}(Z)=i-1}{\colim}H^{1}_{\BMM}(Z_{\et},\Q_{\ell}(1))} \arrow[r, two heads] \arrow[d] & {H^{2i-1}_{\BMM}(X_{\et},\Q_{\ell}(i))} \arrow[d] \\
{\underset{Z\subset X,\ \codim_{X}(Z)=i-1}{\colim}H^{1}(Z,\Q_{\ell}/\Z_{\ell}(1))} \arrow[r, two heads]            & {N^{i-1}H^{2i-1}(X,\Q_{\ell}/\Z_{\ell}(i)),}           
\end{tikzcd}
 \end{center}
and 
\begin{center}\begin{tikzcd}
{\underset{Z\subset X,\ \codim_{X}(Z)=i-1}{\colim}H^{1}_{\BM}(Z_{\proet},\Q_{\ell}(1))} \arrow[r, two heads] \arrow[d] & {N^{i-1}H^{2i-1}_{\BM}(X_{\proet},\Q_{\ell}(i))} \arrow[d] \\
{\underset{Z\subset X,\ \codim_{X}(Z)=i-1}{\colim}H^{1}(Z,\Q_{\ell}/\Z_{\ell}(1))} \arrow[r, two heads]            & {N^{i-1}H^{2i-1}(X,\Q_{\ell}/\Z_{\ell}(i)),}           
\end{tikzcd}
 \end{center}
where the horizontal maps are clearly onto, induce the surjections \[\underset{Z\subset X,\ \codim_{X}(Z)=i-1}{\colim}I^{1}_{\BMM}(Z_{\et})\twoheadrightarrow I^{i}_{\BMM}(X_{\et})\] and \[\underset{Z\subset X,\ \codim_{X}(Z)=i-1}{\colim}I^{1}_{\BM}(Z_{\proet})\twoheadrightarrow I^{i}_{\BM}(X_{\proet}),\] respectively. Hence the task is reduced to showing $I^{1}_{\BMM}(X_{\et})=I^{1}_{\BM}(X_{\proet})$ for an algebraic $k$-scheme $X$ or equivalently the isomorphism \eqref{eq:codomains-beta_{X}-vs-lambda_{X}} for $i=1$. The latter follows from Lemma \ref{lemma:prop-rho} \ref{rho-2} and the fact that the square \begin{center}\begin{tikzcd}
{\frac{H^{1}_{\BM}(X_{\et},\Q_{\ell}/\Z_{\ell}(1))}{H^{1}_{\BMM}(X_{\et},\Q_{\ell}(1))}} \arrow[r, "\cong"] \arrow[d] & {H^{2}_{\BMM}(X_{\et},\Z_{\ell}(1))_{\tors}} \arrow[d, "{\cong}", "{\rho^{2,1}_{X}}"'] \\
{\frac{H^{1}_{\BM}(X_{\et},\Q_{\ell}/\Z_{\ell}(1))}{H^{1}_{\BM}(X_{\proet},\Q_{\ell}(1))}} \arrow[r, "\cong"]            & {H^{2}_{\BM}(X_{\proet},\Z_{\ell}(1))_{\tors}}                    \end{tikzcd}
\end{center} commutes, where the horizontal isomorphisms are induced from the corresponding Bockstein maps. \end{proof} 
The cycle class map $\lambda^{i}_{X}$ from \eqref{def:blochs-map} exists even if we take $X$ to be singular (see \cite[Sec. 4.1]{alexandrou-schreieder}). It takes the form \begin{align}\label{def:Blochs-map-singular} \lambda^{i}_{X}\colon \CH^{i}(X)[\ell^{\infty}]\longrightarrow H^{2i-1}_{\BM}(X_{\et},\Q_{\ell}/\Z_{\ell}(i))/M^{2i-1}(X),
\end{align} 
where the group $M^{2i-1}(X)$ is defined analogously to \eqref{def:M^2i-1} with the only difference that ordinary cohomology is replaced by Borel-Moore pro-\'etale cohomology.
 \begin{corollary}\label{cor:beta_X-vs-lambda_X} Let $X$ be an algebraic $k$-scheme and let $\ell$ be a prime invertible in $k$. Then the cycle class maps $\lambda^{i}_{X}$ from \eqref{def:Blochs-map-singular} and $\beta^{i}_{X}$ from \eqref{def:beta_X} coincide up to a minus sign, i.e. we have a commutative triangle:\begin{center}
     \begin{tikzcd}
{\CH^{i}(X)[\ell^{\infty}]} \arrow[r, "-\beta^{i}_{X}"] \arrow[rd, "\lambda^{i}_{X}"] & {\frac{H^{2i-1}_{\BM}(X_{\et},\Q_{\ell}/\Z_{\ell}(i))}{H^{2i-1}_{\BMM}(X_{\et},\Q_{\ell}(i))}} \arrow[d, equals] \\
                                                                                      & {\frac{H^{2i-1}_{\BM}(X_{\et},\Q_{\ell}/\Z_{\ell}(i))}{M^{2i-1}(X)}.}                                                      
\end{tikzcd}
 \end{center} \end{corollary}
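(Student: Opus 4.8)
The plan is to first check that the two codomains coincide, and then to identify $-\beta^{i}_{X}$ with $\lambda^{i}_{X}$ by reducing to the case $i=1$, which is essentially the content of Lemma \ref{lemma:beta_X=alpha_X}. For the codomains, I would observe that by \eqref{etale=motivic} and \cite{BS} the group $H^{2i-1}_{\BM}(X_{\et},\Q_{\ell}/\Z_{\ell}(i))$ is Borel--Moore \'etale cohomology, canonically identified both with $H^{2i-1}_{\BMM}(X_{\et},\Q_{\ell}/\Z_{\ell}(i))$ and with $H^{2i-1}_{\BM}(X_{\proet},\Q_{\ell}/\Z_{\ell}(i))$; hence the codomains of $\beta^{i}_{X}$ from \eqref{def:beta_X} and of $\lambda^{i}_{X}$ from \eqref{def:Blochs-map-singular} are quotients of one and the same abelian group, and it suffices to show that the subgroups divided out agree, i.e.
\[
M^{2i-1}(X)=\im\!\left(H^{2i-1}_{\BMM}(X_{\et},\Q_{\ell}(i))\longrightarrow H^{2i-1}_{\BM}(X_{\et},\Q_{\ell}/\Z_{\ell}(i))\right).
\]
Fixing a model $X_{0}$ of $X$ over a finitely generated subfield $k_{0}\subset k$, I would combine Lemma \ref{lemma:colimit} (for Borel--Moore pro-\'etale cohomology, cf.\ \cite[Sec.~6]{Sch-refined}) with the fact that the coniveau filtration \eqref{eq:coniveau-filtration} commutes both with the filtered colimit over the finitely generated fields $k_{0}\subset E\subset k$ and with the formation of images, to see that $M^{2i-1}(X)$ is precisely the group $I^{i}_{\BM}(X_{\proet})$ of Lemma \ref{lemma:codomains}. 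Since Lemma \ref{lemma:colimit} also reduces us to $k$ finitely generated over its prime field, Lemma \ref{lemma:codomains} then yields $I^{i}_{\BM}(X_{\proet})=I^{i}_{\BMM}(X_{\et})$, and $I^{i}_{\BMM}(X_{\et})$ is by definition the displayed image. This settles the equality of denominators, and in particular the assertion $I^{2i-1}(X)=M^{2i-1}(X)$ of Theorem \ref{thm:lambda=alpha}.

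Next I would compare the maps themselves. By Lemma \ref{lemma:colimit} I may again assume $k$ finitely generated. For $[z]\in\CH^{i}(X)[\ell^{\infty}]$ choose a closed subscheme $\iota\colon W\hookrightarrow X$ of pure codimension $i-1$ with $[z]\in\im(\iota_{\ast}\colon\CH^{1}(W)[\ell^{\infty}]\to\CH^{i}(X)[\ell^{\infty}])$. Both $\beta^{i}_{X}$ and $\lambda^{i}_{X}$ are compatible with proper pushforward---for $\beta^{i}_{X}$ this is built into Lemma \ref{lemma:beta^{i}_{X}} via the functoriality of the localisation sequence \eqref{def:Gysin-sequence}, and for $\lambda^{i}_{X}$ it is part of the construction in \cite{alexandrou-schreieder}---so the problem is reduced to showing $\lambda^{1}_{W}=-\beta^{1}_{W}$.

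For the case $i=1$ I would reuse the diagram appearing in the proof of Lemma \ref{lemma:beta_X=alpha_X}, which for $i=1$ needs no embedding hypothesis. By that Lemma, $\delta\circ\beta^{1}_{W}=\tilde{\alpha}^{1}_{W}$ with $\delta$ the Bockstein isomorphism of Proposition \ref{prop:twisted-borel-moore-axioms} \ref{P3}. Applying the comparison map $\rho$ from \eqref{def:comparison-map}, which is compatible with proper pushforward and with Bockstein maps, and using $\rho^{2,1}_{W,\tors}\circ\tilde{\alpha}^{1}_{W,\tors}=\cl^{1}_{W,\tors}$ (as in the proof of Lemma \ref{lemma:prop-rho} \ref{rho-2}), one obtains
\[
\delta\circ\rho^{1,1}_{W}\circ\beta^{1}_{W}=\rho^{2,1}_{W,\tors}\circ\delta\circ\beta^{1}_{W}=\cl^{1}_{W,\tors}.
\]
Since $\cl^{1}_{W,\tors}=-\delta\circ\lambda^{1}_{W}$ by \cite[Lemma 3.5]{alexandrou-schreieder} and $\delta$ is injective, this gives $\rho^{1,1}_{W}\circ\beta^{1}_{W}=-\lambda^{1}_{W}$; and by the $i=1$ instance of the codomain comparison above (Lemma \ref{lemma:prop-rho} \ref{rho-2} together with Lemma \ref{lemma:codomains}) the map $\rho^{1,1}_{W}$ is the identity under the identification of the two codomains. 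Therefore $\lambda^{1}_{W}=-\beta^{1}_{W}$, which is what was needed.

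I expect the first step to be the main obstacle: showing that $M^{2i-1}(X)$ from \eqref{def:M^2i-1}, defined through the coniveau filtration of continuous (pro-\'etale) cohomology over the finitely generated subfields of $k$, agrees with the image of $\Q_{\ell}$-Borel--Moore motivic \'etale cohomology. This reduces to interchanging the coniveau filtration with the colimit over subfields---so that Lemma \ref{lemma:colimit} applies---and then to Lemma \ref{lemma:codomains}. Once the codomains are reconciled, the sign is forced purely formally by the diagram of Lemma \ref{lemma:beta_X=alpha_X} and the normalisation $\cl^{1}_{W,\tors}=-\delta\circ\lambda^{1}_{W}$ of \cite{alexandrou-schreieder}.
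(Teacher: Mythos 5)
Your handling of the codomains is essentially the paper's own argument: for finitely generated $k$ the equality of the two denominators is exactly Lemma \ref{lemma:codomains}, and the general case is reached by a colimit over the finitely generated subfields $k_{0}\subset E\subset k$. One caution there: for general $k$ you should not assert that $M^{2i-1}(X)$ equals $\im\bigl(N^{i-1}H^{2i-1}_{\BM}(X_{\proet},\Q_{\ell}(i))\to H^{2i-1}_{\BM}(X_{\et},\Q_{\ell}/\Z_{\ell}(i))\bigr)$, because no continuity statement over the tower of subfields is available for Borel--Moore \emph{pro-\'etale} cohomology with $\Q_{\ell}$-coefficients; Lemma \ref{lemma:colimit} concerns $H^{\ast}_{\BMM}$ only. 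The paper sidesteps this by using that $M^{2i-1}(X)$ is \emph{by definition} a colimit of images, applying Lemma \ref{lemma:codomains} over each finitely generated $E$, and writing $\lambda^{i}_{X}=\colim\lambda^{i}_{{X_{0}}_{E}}$ and $\beta^{i}_{X}=\colim\beta^{i}_{{X_{0}}_{E}}$ (the latter from Lemma \ref{lemma:colimit}). Your eventual retreat to finitely generated $k$ makes this a fixable looseness rather than an error.

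Where you genuinely diverge is the comparison of the maps, and this is where the gap sits. The paper never reduces to $i=1$ here: the commutativity of the triangle is read off directly from the fact that $\beta^{i}_{X}$ and $\lambda^{i}_{X}$ are built by the identical recipe (the isomorphism $\varphi_{r}$ of \eqref{def:varphi_{r}} followed by the coniveau pushforward), cf.\ \cite[pp.\ 6, 13]{alexandrou-schreieder}; this is precisely why no embedding hypothesis appears in the corollary, and why the only real work is the identification of the denominators. Your route instead pushes forward from a closed $W\subset X$ of pure codimension $i-1$ and extracts the $i=1$ comparison from Lemma \ref{lemma:beta_X=alpha_X} together with $\cl^{1}_{W,\tors}=-\delta\circ\lambda^{1}_{W}$, the identity $\rho^{2,1}_{W,\tors}\circ\tilde{\alpha}^{1}_{W,\tors}=\cl^{1}_{W,\tors}$, and compatibility of $\rho$ with the Bockstein. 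The sign algebra is correct and using Lemma \ref{lemma:beta_X=alpha_X} as a black box is not circular (it precedes the corollary in the paper). But Lemma \ref{lemma:beta_X=alpha_X} is stated only for schemes admitting a closed embedding into a smooth equi-dimensional scheme, and \cite[Lemma 3.5]{alexandrou-schreieder} is invoked in the paper only for such $W$; since the corollary is claimed for \emph{arbitrary} algebraic $k$-schemes, your assertion that ``for $i=1$ no embedding hypothesis is needed'' is exactly what would have to be proved, not assumed, and the same goes for the existence of $W$ carrying a torsion lift of $[z]$ and for the proper-pushforward compatibility of $\lambda$ and $\beta$ in this generality. As written, your argument establishes the statement for embeddable (e.g.\ quasi-projective) $X$; to obtain the stated generality you should either verify those $i=1$ inputs without the embedding hypothesis or, more efficiently, argue as the paper does by comparing the two constructions directly.
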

 \begin{proof} The fact that the diagram in question is commutative follows from the construction of $\beta^{i}_{X}$ which is analogous to the one of $\lambda^{i}_{X}$ (see \cite[pp. 6, 13]{alexandrou-schreieder}). Fix a finitely generated subfield $k_{0}\subset k$, so that we can find a model $X_{0}$ of $X$, i.e. $X=X_{0}\times_{k_{0}}k$. The group $M^{2i-1}(X)$ is given explicitly by:\[M^{2i-1}(X)=\im\left( \underset{k_{0}\subset E\subset k}{\colim} N^{i-1} H^{2i-1}_{\BM}({{X_0}_{E}}_{\proet},\Q_\ell(i))\longrightarrow  H^{2i-1}_{\BM}(X_{\et},\Q_\ell/\Z_\ell(i))\right),\]where the direct limit runs over all finitely generated subfields $E\subset k$ with $k_{0}\subset E$. Especially, if the field $k$ is itself finitely generated, we obtain that $M^{2i-1}(X)=N^{i-1} H^{2i-1}_{\BM}(X_{\proet},\Q_\ell(i))$. In this case, the result is an immediate consequence of Lemma \ref{lemma:codomains}. Expressing $\lambda^{i}_{X}$ as a colimit, i.e. $$\lambda^{i}_{X}=\underset{k_{0}\subset E\subset k}{\colim}\lambda^{i}_{{X_{0}}_{E}},$$ we see that the Corollary follows once we have\begin{align} \label{eq:colimit-beta}\beta^{i}_{X}=\underset{k_{0}\subset E\subset k}{\colim}\beta^{i}_{{X_{0}}_{E}},\end{align} where as before the colimit runs through all finitely generated subfields $E\subset k$ with $k_{0}\subset E$. The latter is evident by Lemma \ref{lemma:colimit}.\end{proof}
 \begin{corollary}\label{cor:l=a} Let $X$ be an algebraic $k$-scheme which admits a closed embedding into a smooth and equi-dimensional algebraic $k$-scheme. Let $\ell$ be a prime invertible in $k$. The following diagram \begin{center}
     \begin{tikzcd}
{\CH^{i}(X)[\ell^{\infty}]} \arrow[r, "\tilde{\alpha}^{i}_{X}"] \arrow[d, "-\lambda^{i}_{X}"'] \arrow[rd, "\beta^{i}_{X}"] & {H^{2i}_{\BMM}(X_{\et},\Z_{\ell}(i))_{\tors}}                                                                 \\
{\frac{H^{2i-1}_{\BM}(X_{\et},\Q_{\ell}/\Z_{\ell}(i))}{M^{2i-1}(X)}} \arrow[r, equals]                      & {\frac{H^{2i-1}_{\BM}(X_{\et},\Q_{\ell}/\Z_{\ell}(i))}{H^{2i-1}_{\BMM}(X_{\et},\Q_{\ell}(i))}} \arrow[u,"\delta", "\cong"']
\end{tikzcd}
 \end{center}
commutes, where the isomorphism $\delta$ is induced from the Bockstein map (see Prop. \ref{prop:twisted-borel-moore-axioms} \ref{P3}), $\lambda^{i}_{X}$ is the map from \eqref{def:Blochs-map-singular} and the maps $\tilde{\alpha}^{i}_{X}$ and $\beta^{i}_{X}$ are given by \eqref{def:kahn-map} and \eqref{def:beta_X}, respectively. \end{corollary}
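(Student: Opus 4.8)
The plan is to obtain the commutativity of the square by gluing together two pieces that have already been established in the previous section. First, I would observe that the hypothesis on $X$ --- admitting a closed embedding into a smooth and equi-dimensional algebraic $k$-scheme --- is precisely the standing assumption of Lemma \ref{lemma:beta_X=alpha_X}. That lemma directly yields the commutativity of the upper right triangle of the diagram, namely $\tilde{\alpha}^{i}_{X} = \delta \circ \beta^{i}_{X}$, where $\delta$ is the Bockstein isomorphism from Proposition \ref{prop:twisted-borel-moore-axioms} \ref{P3}. So the only thing left is the lower left triangle.

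Next I would invoke Corollary \ref{cor:beta_X-vs-lambda_X}, which asserts exactly that $\lambda^{i}_{X} = -\beta^{i}_{X}$, i.e. $\beta^{i}_{X} = -\lambda^{i}_{X}$, once one identifies the two codomains $H^{2i-1}_{\BM}(X_{\et},\Q_{\ell}/\Z_{\ell}(i))/M^{2i-1}(X)$ and $H^{2i-1}_{\BM}(X_{\et},\Q_{\ell}/\Z_{\ell}(i))/H^{2i-1}_{\BMM}(X_{\et},\Q_{\ell}(i))$. This identification is the bottom equality of the diagram; it is part of the content of Corollary \ref{cor:beta_X-vs-lambda_X} and rests on Lemma \ref{lemma:codomains}, which shows that $M^{2i-1}(X)$ agrees with the image of $H^{2i-1}_{\BMM}(X_{\et},\Q_{\ell}(i))$ in Borel--Moore \'etale cohomology (using $N^{i-1}H^{2i-1}_{\BMM}(X_{\et},\Q_{\ell}(i)) = H^{2i-1}_{\BMM}(X_{\et},\Q_{\ell}(i))$ from \eqref{lemma:N^{i-1(Q)}}).

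Finally I would simply compose the two: $\delta \circ (-\lambda^{i}_{X}) = \delta \circ \beta^{i}_{X} = \tilde{\alpha}^{i}_{X}$, which is the commutativity of the outer square; since $\delta$ is an isomorphism nothing further needs checking. In this sense the corollary is a formal assembly of Lemma \ref{lemma:beta_X=alpha_X} and Corollary \ref{cor:beta_X-vs-lambda_X}, and I do not expect any genuine obstacle. The one point that deserves a word of care --- and it has already been dispatched by Lemma \ref{lemma:codomains} and Corollary \ref{cor:beta_X-vs-lambda_X} --- is the compatibility of the various descriptions of the common codomain, namely the pro-\'etale group $M^{2i-1}(X)$ defining the target of $\lambda^{i}_{X}$ in \eqref{def:Blochs-map-singular} versus the \'etale motivic group $H^{2i-1}_{\BMM}(X_{\et},\Q_{\ell}(i))$ defining the target of $\beta^{i}_{X}$; this is where the hypothesis of finite generation of $k$ (and then passage to the colimit over finitely generated subfields via Lemma \ref{lemma:colimit}, together with Lemma \ref{lemma:prop-rho} \ref{rho-2}) enters, exactly as in the proofs of the two cited results.
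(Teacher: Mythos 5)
Your proposal is correct and matches the paper's argument exactly: the paper also deduces the corollary immediately by combining Lemma \ref{lemma:beta_X=alpha_X} (giving $\tilde{\alpha}^{i}_{X}=\delta\circ\beta^{i}_{X}$) with Corollary \ref{cor:beta_X-vs-lambda_X} (giving $\lambda^{i}_{X}=-\beta^{i}_{X}$ together with the identification of the codomains). Your additional remarks on where Lemma \ref{lemma:codomains} and the colimit argument enter are accurate but already absorbed in the cited results.
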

 \begin{proof} It follows immediately from Lemma \ref{lemma:beta_X=alpha_X} and Corollary \ref{cor:beta_X-vs-lambda_X}.\end{proof}
 The following is Theorem \ref{thm:lambda=alpha}, written in a slightly different form: If $X$ is a smooth variety over $k$, then the cycle class map $\beta^{i}_{X}$ from \eqref{def:beta_X} stated below, coincides with the map \eqref{def:etale-motivic-torsion-map} given in the introduction by Corollary \ref{cor:etale-motivic-cycle-class-map} and Lemma \ref{lemma:beta_X=alpha_X}. 
 \begin{corollary}[=Theorem \ref{thm:lambda=alpha}]\label{end} Let $X$ be a smooth and equi-dimensional algebraic $k$-scheme and let $\ell$ be a prime invertible in $k$. The following diagram \begin{center}
     \begin{tikzcd}
{\CH^{i}(X)[\ell^{\infty}]} \arrow[r, "\alpha^{i}_{X,\tors}"] \arrow[d, "-\lambda^{i}_{X}"'] \arrow[rd, "\beta^{i}_{X}"] & {H^{2i}_{L}(X,\Z_{\ell}(i))_{\tors}}                                                                 \\
{\frac{H^{2i-1}(X_{\et},\Q_{\ell}/\Z_{\ell}(i))}{M^{2i-1}(X)}} \arrow[r, equals]                      & {\frac{H^{2i-1}(X_{\et},\Q_{\ell}/\Z_{\ell}(i))}{H^{2i-1}_{M}(X,\Q_{\ell}(i))}} \arrow[u,"\delta", "\cong"'],
\end{tikzcd}
 \end{center}
commutes, where the isomorphism $\delta$ is induced from the Bockstein map (see Prop. \ref{prop:twisted-borel-moore-axioms} \ref{P3}), $\lambda^{i}_{X}$ is the map from \eqref{def:blochs-map} and the maps $\alpha^{i}_{X,\tors}$ and $\beta^{i}_{X}$ are given by \eqref{eq:alpha_{X}} and \eqref{def:beta_X}, respectively. 
 \end{corollary}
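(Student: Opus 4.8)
The plan is to deduce the statement from Corollary \ref{cor:l=a} by rewriting every group occurring there in terms of Lichtenbaum, motivic and ordinary \'etale cohomology. First I would note that a smooth equi-dimensional algebraic $k$-scheme $X$ admits the identity $X\hookrightarrow X$ as a closed embedding into a smooth equi-dimensional $k$-scheme, so Corollary \ref{cor:l=a} applies to $X$ and produces the commutative triangle with vertices $\CH^{i}(X)[\ell^{\infty}]$, $H^{2i}_{\BMM}(X_{\et},\Z_{\ell}(i))_{\tors}$ and $H^{2i-1}_{\BM}(X_{\et},\Q_{\ell}/\Z_{\ell}(i))/H^{2i-1}_{\BMM}(X_{\et},\Q_{\ell}(i))$, the latter quotient being equal to $H^{2i-1}_{\BM}(X_{\et},\Q_{\ell}/\Z_{\ell}(i))/M^{2i-1}(X)$.

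Next I would carry out the translations supplied by Subsection \ref{sub:1}. By \eqref{etale-vs-lichtenbaum} one has $H^{2i}_{\BMM}(X_{\et},\Z_{\ell}(i))\cong H^{2i}_{L}(X,\Z_{\ell}(i))$, so passing to torsion subgroups identifies the target of $\tilde{\alpha}^{i}_{X}$ with $H^{2i}_{L}(X,\Z_{\ell}(i))_{\tors}$; by Corollary \ref{cor:etale-motivic-cycle-class-map} the map $\tilde{\alpha}^{i}_{X}$ is the \'etale motivic cycle class map $\alpha^{i}_{X}$ of \eqref{def:motivic-cycle-map}, so its restriction to $\ell$-power torsion cycles is $\alpha^{i}_{X,\tors}$ of \eqref{def:etale-motivic-torsion-map}. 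Since $X$ is smooth, \eqref{etale=motivic} together with the first isomorphism of \eqref{eq:BM-proet-smooth-case-1} and a colimit over $r$ give $H^{2i-1}_{\BM}(X_{\et},\Q_{\ell}/\Z_{\ell}(i))\cong H^{2i-1}(X_{\et},\Q_{\ell}/\Z_{\ell}(i))$, and \eqref{etale-vs-motivic} gives $H^{2i-1}_{\BMM}(X_{\et},\Q_{\ell}(i))\cong H^{2i-1}_{M}(X,\Q_{\ell}(i))$; under the latter, the image of this group in $H^{2i-1}(X_{\et},\Q_{\ell}/\Z_{\ell}(i))$ is by definition $I^{2i-1}(X)$. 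Finally, for smooth $X$ the Borel-Moore pro-\'etale $M^{2i-1}(X)$ of \eqref{def:Blochs-map-singular} agrees with the continuous-\'etale $M^{2i-1}(X)$ of \eqref{def:M^2i-1} by \eqref{eq:BM-proet-smooth-case-1}, so the equality of denominators in Corollary \ref{cor:l=a} reads $M^{2i-1}(X)=I^{2i-1}(X)$ as subgroups of $H^{2i-1}(X_{\et},\Q_{\ell}/\Z_{\ell}(i))$.

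With these identifications in place the diagram of Corollary \ref{cor:l=a} becomes verbatim the diagram in the statement, and its commutativity yields $\delta\circ\beta^{i}_{X}=\alpha^{i}_{X,\tors}$ and $\lambda^{i}_{X}=-\beta^{i}_{X}$, hence $\lambda^{i}_{X}=-\alpha^{i}_{X,\tors}$ together with $I^{2i-1}(X)=M^{2i-1}(X)$, for all $1\leq i\leq\dim X$; this is Theorem \ref{thm:lambda=alpha}. I expect the only delicate point---rather than a genuine obstacle---to be the verification that the Bockstein isomorphism $\delta$ of Corollary \ref{cor:l=a} is carried, under \eqref{etale=motivic} and \eqref{etale-vs-lichtenbaum}, to the Bockstein isomorphism $H^{2i}_{L}(X,\Z_{\ell}(i))_{\tors}\cong H^{2i-1}(X_{\et},\Q_{\ell}/\Z_{\ell}(i))/I^{2i-1}(X)$ of Proposition \ref{prop:twisted-borel-moore-axioms}\,\ref{P3}; this follows from the functoriality of all the comparison isomorphisms of Subsection \ref{sub:1} in the ring of coefficients.
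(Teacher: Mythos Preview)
Your proposal is correct and follows essentially the same approach as the paper: deduce the diagram from Corollary \ref{cor:l=a}, use Corollary \ref{cor:etale-motivic-cycle-class-map} to replace $\tilde{\alpha}^{i}_{X}$ by $\alpha^{i}_{X}$, and invoke the identifications \eqref{etale=motivic}, \eqref{etale-vs-lichtenbaum} and \eqref{etale-vs-motivic} to rewrite the codomains. Your write-up is in fact more explicit than the paper's three-line proof (you spell out why $X$ satisfies the hypothesis of Corollary \ref{cor:l=a}, why the two versions of $M^{2i-1}(X)$ agree, and why the Bockstein maps match), but the logical route is the same.
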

 \begin{proof} This readily follows from Corollary \ref{cor:l=a}. Since X is smooth and equi-dimensional, we find from Corollary \ref{cor:etale-motivic-cycle-class-map} that $\tilde{\alpha}^{i}_{X}=\alpha^{i}_{X}$. Moreover, the codomain of $\beta^{i}_{X}$ from \eqref{def:beta_X} is consistent with the one appearing in the statement of the Corollary because of the canonical identifications \eqref{etale=motivic} and \eqref{etale-vs-motivic}.\end{proof}
 
 \section*{Acknowledgements} I am grateful to Stefan Schreieder for his helpful comments concerning this work. This project has received funding from the European Research Council (ERC) under the European Union's Horizon 2020 research and innovation programme under grant agreement No 948066 (ERC-StG RationAlgic).\\


\end{document}